\documentclass[reqno]{amsart}
\usepackage{graphicx} 
\usepackage{mathpazo}
\usepackage{microtype}
\usepackage{tabularx}
\usepackage[utf8]{inputenc}
\usepackage{hyperref}
\usepackage{xcolor}
\usepackage{amsmath,amsthm,amssymb,mathtools,bbm}
\usepackage[foot]{amsaddr}
\usepackage{graphicx}
\usepackage{upgreek}
\usepackage[normalem]{ulem} 
\usepackage{mathrsfs}
\usepackage{ytableau} 
\usepackage{tikz}
\usepackage{braids}
\usetikzlibrary{arrows}
\usetikzlibrary{braids}
\usepackage{float}
\usepackage[all,2cell]{xy}
\UseAllTwocells 
\usepackage{blkarray}
\oddsidemargin.5cm
\textwidth15cm
\voffset-1.75cm
\textheight654pt

\title{Paravortices: loop braid representations \\ with both generators involutive }

\author{Paul P. Martin$^1$}
\address{$^1$University of Leeds}

\author{Eric C. Rowell$^{1,3}$}
\address{$^3$Texas A\&M University}
\author{Fiona Torzewska$^{2,4}$}
\address{$^2$University of Bristol}
\address{$^4$Heilbronn Institute for Mathematical Research}

\newcommand{\ppm}[1]{\textcolor{red}{#1}}
\newcommand{\ppmx}[1]{}
\newcommand{\ppmm}[1]{#1}
\newcommand{\ecr}[1]{\textcolor{orange}{#1}}
\newcommand{\ft}[1]{\textcolor{purple}{#1}}
\newcommand{\ftx}[1]{}
\newcommand{\ii}{\textrm{i}}

\newcommand{\PI}[1]{\Pi^{N}}


\newcommand{\An}{\mathfrak{A}}    

\newcommand{\Bcat}{\mathsf{B}}
\newcommand{\LB}{\mathbf{L}}
\newcommand{\Lcat}{\mathsf{L}}
\newcommand{\Mat}{\mathsf{Mat}} 


\newcommand{\Sym}{\Sigma} 
 %
\newcommand{\N}{\mathbb{N}}   

\newcommand{\ignore}[1]{}

\newtheorem{theorem}{Theorem}[section]
\newtheorem{proposition}[theorem]{Proposition}
\newtheorem{lemma}[theorem]{Lemma}
\newtheorem{corollary}[theorem]{Corollary}
\newtheorem{conjecture}[theorem]{Conjecture}


\theoremstyle{definition}
\newtheorem{defin}[theorem]{Definition}
\newtheorem{example}[theorem]{Example}

\newtheorem{remark}[theorem]{Remark}

\newtheorem{para}[theorem]{}

\newcommand{\beq}{\begin{equation}}
\newcommand{\eq}{\end{equation}} 


\newcommand{\C}{{\mathbb C}}
\newcommand{\Z}{{\mathbb Z}}
\newcommand{\mat}{\begin{bmatrix}}
\newcommand{\tam}{\end{bmatrix}} 
\newcommand{\matt}[1]{\left( \begin{array}{#1}}
\newcommand{\ttam}{\end{array} \right)}



\newcommand{\fff}{{\mathsf{f}}}


%


\newcommand{\smat}{\left(\begin{smallmatrix}}
\newcommand{\stam}{\end{smallmatrix}\right)}






\newcommand{\footnotex}[1]{}  


\newcounter{minidef}[section]

\newcommand{\mdef}{\refstepcounter{theorem} 
\medskip \noindent ({\bf \thetheorem}) }
\newcounter{minicapt}



 

\newcommand{\catC}{{\mathcal C}}





\newcommand{\PP}{\mathtt{P}}    





\newcommand{\MD}{{\mathsf{MD}}}  
\newcommand{\MDD}{{\mathsf{MD}'}}  
\newcommand{\VB}{{\mathsf{VB}}} 
\newcommand{\VP}{{\mathsf{VP}}}

\begin{document}

\maketitle

\begin{center}\dedicatory{Dedicated to Zhenghan Wang on the occasion of his 60th birthday.}\end{center}

\begin{abstract}
    We 
    first motivate the study 
    of 
    a certain quotient of the loop braid category, both for the mathematics underpinning recent approaches to topological quantum computation; and as a key example in non-semisimple higher representation theory. 
    For reasons that will become clear, we call this quotient the mixed doubles category, $\MD$. 
    Then our main result is a theorem 
    classifying all mixed doubles representations in rank-2. 
    
    Each representation yields a mixed doubles group representation for every loop braid group $\LB_n$, and we are able to analyse the 
    unified 
    linear representation theory 
    of many of these sequences of representations, using a mixture of very classical, classical, and new techniques. In particular this is a motivating example for the `glue' generalisation of charge-conserving representation theory (a form of rigid higher non-semisimplicity) introduced recently. 
\end{abstract}
\tableofcontents

\section{Introduction}
The study of local representations of the loop braid groups 
$L_n$ was initiated in \cite{KMRW}, partially motivated by the desire to explore topological phases of matter in 3 spatial dimensions, see also for example \cite{QiuWang2021,BullivantMartinsMartin2019}.  The motions of free loops (e.g., vortices) in 3 dimensional media are natural generalisations of the motions of points (e.g., anyons) in 2 dimensions, so $L_n$ plays the role of the braid group $B_n$ in the higher dimensional sequel to the topological quantum computation via anyons story, see \cite{FLKW,Nayaketal,RowellWangBull} and references therein.   Indeed, $L_n$ resembles $B_n$ strongly: the motions of passing a loop through its neighbour (`leapfrogging') form a subgroup isomorphic to $B_n$, while interchanging adjacent loops have order 2 and yield a copy of the symmetric group $\Sym_n$.  

By a \emph{local} representation we mean a strict monoidal functor $F:\LB\rightarrow\Mat$ from the loop braid category $\LB$ to the category $\Mat$ of matrices   
(see e.g. 
\cite{MRT25} for 
a review of the setup in the setting of 
the braid category $\Bcat$  \cite{MacLane}). 
The  morphisms from object $n$ to itself in $\LB$ 
form
the loop braid group $L_n$, just as the morphisms in $\Bcat$ are braid groups $B_n$. In practice, one specifies such a functor $F$ by means of two Yang-Baxter operators $R,S$ satisfying certain compatibility relations and $S^2=I$. There are numerous reasons to focus on these local representations, not least of which is the connection to the quantum circuit model for quantum computation, in which circuits are cascades of matrices acting on a few adjacent factors of $\bigotimes_i\C^n$ \cite{NielsenChuang2000}. 
For existing examples in the literature of local loop braid representations in physical settings see also for example \cite{BullivantMartinsMartin2019,DMMx} and references therein. 

In this article we study monoidal functors from the mixed doubles category, which is obtained by
\ppmm{setting also $R^2=1$} and is 
denoted $\MD$.  The  {morphism groups} in this category are 
thus 
quotients $\MD_n$ of $L_n$ by the relation that the leapfrogging generators also have order $2$.  Thus $\MD_n$ 
\ppmm{has}
two copies of the symmetric group $\Sym_n$ on equal footing, satisfying some mixed relations: hence the `mixed doubles' moniker.

There are a number of compelling reasons to study local representations of $\MD_n$.

A physical motivation is that, philosophically, these are a natural 3-dimensional analogue of \emph{paraparticles}  \cite{WangHazzard2024Supplement} (a generalisation of parafermions, see eg. \cite{AliceaFendley2016})  where the exchange statistics of $n$ identical paraparticles yield representations of the symmetric group $\Sym_n$.  From this point of view, representations of $\MD_n$ correspond to \emph{paravortices}: loop-like excitations for which the loop leapfrog operation on adjacent loops squares to the identity.  Thus $\MD_n$ fits into the topological quantum computation story as the group governing the loop motion statistics of paravortices.  Representations of $\MD_n$ can then be studied as  quantum gates implemented on systems of paravortices, which could provide fault-tolerant models for quantum computation.  We will see that universality is unlikely, but they are a compelling potential quantum resource, nonetheless (cf. \cite{RowellWang12}).

Mathematically, 
$\MD_n$ is a non-trivial quotient of $L_n$, analogous to $\Sym_n$ as the first non-trivial quotient of $B_n$. Thus $\MD_n$ is a Coxeter-style combinatorial version of $L_n$.  Unlike $\Sym_n$, $\MD_n$ is an infinite group, so it retains some of the intrigue of $L_n$.  
Moreover, the mixing of the two components - the two copies of $\Sym_n$ - has the effect of making  
basis choices matter, 
which can be cast as 
a type of higher (or extended) representation theory
problem.  An intriguing but relatively tame presentation of $L_n$ as a semidirect product is available \cite{BardakovBellingeriDamiani15} which can be leveraged for representation theoretic computations, 
as we shall discuss.

Next, although perhaps the main problem in this area is the classification of braid representations, the part of it corresponding to the classification of symmetric representations (strict monoidal functors $F:\Sym\rightarrow\Mat$) is also a current focus. And of course $\MD$ representations can be seen as a generalisation of this. 
In the symmetric rep context, 
there is a classification of unitary, involutive solutions up to a representation-theoretic notion of equivalence,  
\ppmm{due to} 
\cite{LechnerPennigWood}.  Thus {the representation theory of $\MD$} potentially interpolates between the full braid representation 
problem 
- classifying strict monoidal functors $F:\Bcat\rightarrow \Mat\;$ - 
and partial solutions such as the \cite{LechnerPennigWood}
classification of symmetric representations/involutive braid representations $F:\Sym\rightarrow\Mat$.

We also note that we can make contact with the \ppmm{growing} community of researchers in set-theoretical solutions to the Yang-Baxter equation, where a main focus is on involutive solutions (see, e.g., \cite{gatevaivanova2009multipermutation}).  A natural subproblem of the classification of local $\MD_n$ representations would be to ask for pairs of set-theoretical solutions $(R,S)$ that yield such a representation.
Contact between this area and the classification programme for braid representations is already discussed in \cite{MRT25}.

\smallskip

Our main result, Theorem~\ref{thm: big theorem}, is a classification, up to natural equivalences, of all strict monoidal functors $F:\MD\rightarrow\Mat$ 
in rank 2, i.e. 
with $F(1)=2$. 
That is, 
we classify 
all local representations with $R,S$ being $4\times 4$.  
In \S\ref{ss:analyse}
we develop and deploy some machinery to enable us to 
analyse several of these functors as representations (i.e. as collections of ordinary representations taken together), 
providing, in some cases, all-$n$ descriptions of their irreducible content and their decompositions into indecomposable subrepresentations. 
In other notable cases we simply analyse representations in low rank by more direct methods. 
\ignore{{
\ecr{[kill from here ....]}

\section{no longer the Introduction, more like Preliminaries}
The mixed doubles category studied here, defined just below, and denoted $\MD$, 
is interesting for a number of reasons: 
\\ $\bullet$
It is a natural and useful quotient of the  loop braid category $\Lcat$. 
It is much simpler, but it has one of the key features of loop braid: 
the `mixing' of a symmetric group with a related group (in this case another 
symmetric group - hence {\em mixed doubles} category), 
where the mixing can be viewed as having the effect of making bases matter in representation theory - a type of higher (or extended) representation theory. 
\\ $\bullet$
It can be viewed as a generalisation of the  category $\Sym$ of symmetric groups. 
\\ $\bullet$
Its representation theory potentially interpolates between the full braid rep 
problem 
- classifying strict monoidal functors $F:\Bcat\rightarrow \Mat\;$ - 
and partial solutions such as Lechner--Pennig--Wood's 
classification of sym reps/involutive braid reps $F:\Sym\rightarrow\Mat$  \cite{LechnerPennigWood}. 
\\ $\bullet$ 
Each end-group is isomorphic to a group with a relatively very tame presentation, due to Bardakov--Bellingeri--Damiani \cite{BardakovBellingeriDamiani15} ... ...

\ecr{[...to here, once all points have been 'moved up'.]}
\medskip }}

\vspace{.53cm} 

\noindent {\bf Acknowledgements}.
PM thanks EPSRC for funding under project EP/W007509/1; 
and thanks the Royal Society for funding ER's Wolfson Fellowship at Leeds 
contributing to this work. PM also thanks 
Jarmo Hietarinta, 
Sarah Almateari 
and Paula Martin for useful discussions. 
The work of ER was also partially supported by US NSF grant DMS-2205962.  Part of this work was carried out while ER was visiting the University of Leeds on a Royal Society Wolfson Fellowship, and he thanks them for their hospitality and support.  And all 3 authors thank Zhenghan Wang for continued inspiration and guidance.

\section{Preliminaries}  \label{ss:prelims}
\newcommand{\RRR}{{\mathfrak{R}}}

In this paper we will be concerned with representation theory. However it is 
conceptually (and motivationally) appropriate, and illuminating, to start with a geometric-topological context for the groups under consideration. 

For $n \in \N$
the loop braid group $\LB_n$ is a motion group of a configuration of $n$ unlinked unknotted oriented loops in 3-dimensions (see e.g. \cite{Damiani} for a review and original references). 
For definiteness, for the reference configuration, $C_n$, we take circles of unit radius lying in the xy-plane, with centres regularly spaced along the x-axis. 
Hence in particular for each loop $l$ in the reference configuration there is a disk of which $l$ is the boundary. 
For example in $\LB_2$ 
we can consider a certain pair of  
motions. Both involve the exchange of the two loops. 
Let us consider both motions in a frame in which the first loop appears static - so that the second loop appears to pass from a place to the right to a place to the left of it. 
(N.B., in this frame the  {\em constant} representative of the identity motion has neither loop apparently static to the observer.)
In the first motion,  
$\sigma$, say, 
there is a representative concrete motion in which the moving loop completes its journey well away from the static loop. 
In the second motion, $\rho$,  
there is a representative in which the moving loop 
passes once through the disc of the static loop. It will be clear that these representatives give distinct motions; indeed that the first is involutive and that the second is not. 

In fact the two motions $\sigma$ and $\rho$  
generate $\LB_2$, and,  
fixing $n$,
a corresponding chain of pair exchange elements $\sigma_i$ and $\rho_i$  generates $\LB_n$. 
Thus $\sigma_i^2 =1$ and $\sigma_i \sigma_j = \sigma_j \sigma_i $ for $j\neq i\pm 1$, and so on.

\mdef   \label{de:loopBcat}
The category $\LB$ is the strict monoidal category with object class $\N_0$, with $\LB(n,n)=\LB_n$; 
$\; \LB(n,m)=\emptyset$ if $n \neq m$; and monoidal product given by juxtaposition of the reference configurations along the x-axis 
(see for example \cite{MartinRowellTorzewska} and references therein, 
and cf. the braid category in \cite{MacLane}).  

Observe that category  $\LB$ is generated as a monoidal category by the object 1 and the morphisms 
$\sigma$ and $\rho \in \LB(2,2)$. In particular , writing $ 1_1$ for the identity in $\LB(1,1)$ we have 
(fixing $n=3$ for a moment) 
$\sigma_2 = 1_1 \otimes \sigma \in \LB(3,3)$, and so on.


\mdef 
Consider 
 the abstract group $L_n$ with presentation 
\[
L_n \;\; = \;\; \langle\;  s_1, s_2,\ldots, s_{n-1}, r_1, r_2, \ldots, r_{n-1} \; | \; 
\RRR_- \;\rangle
\]
where the relations $\RRR_-$ are:
\begin{equation} \label{eq:sigmass}
 r_i r_{i+1} r_i = r_{i+1} r_i r_{i+1} 
, \hspace{1cm}
 \; r_i r_{i+1} s_i = s_{i+1} r_i r_{i+1}
, \hspace{1cm} 
 \; r_i s_{i+1} s_i = s_{i+1} s_i r_{i+1}  
\end{equation}
together with the symmetric group relations for the $s_i$:
\beq  \label{eq:sss-ss1}
s_i s_{i+1} s_i = s_{i+1} s_i s_{i+1}, \;  \; \;\;   \hspace{1cm} 
s_i^2 =1,
\eq 
and far commutation relations:
\beq\label{eq:farcommute}
r_ir_j=r_jr_i, \;\; s_is_j=s_js_i,\;\; r_is_j=s_jr_i,\;\;\;\;\; |i-j|\neq 1 
\eq 

\mdef   \label{de:Liso}
For each $n$ 
the group $\LB_n$ is isomorphic to the abstract group $L_n$ under the map on generators given by 
$\sigma_i \mapsto s_i$ and $\rho_i \mapsto r_i$.

\medskip 

\mdef   \label{de:MDgroup}
For $n \in \N$
the mixed doubles group $\MD_n$ is defined  
by the presentation    
\beq   \label{eq:presentMD}
\MD_n \;\;= \;\; \langle\;  s_1, s_2,\ldots, s_{n-1}, r_1, r_2, \ldots, r_{n-1} \; | \; 
\RRR \;\rangle
\eq
where the relations $\RRR$ are the relations $\RRR_-$ together with:
\beq  \label{eq:rr1}
r_i^2 =1,
\eq 
\ignore{{ 
\beq
s_i s_{i+1} s_i = s_{i+1} s_i s_{i+1}, \;  \; \;\;
r_i r_{i+1} r_i \; = \; r_{i+1} r_i r_{i+1}, \; \;    
\eq 
\beq 
r_{i} s_{i+1} s_i = s_{i+1} s_i r_{i+1}, \;\;\;\;\hspace{.1cm}
s_i r_{i+1} r_i \; = \; r_{i+1} r_i s_{i+1}
\eq 

\beq r_ir_j=r_jr_i, \;\; s_is_j=s_js_i,\;\; r_is_j=s_jr_i,\;\; |i-j|\neq 1
\eq
}}

\medskip


The groups $\MD_n$ rank-by-rank have various other names in the literature
depending on the background of the authors. See e.g. \cite{BardakovBellingeriDamiani15}, 
where $\MD_n$ is called the group of flat welded braids, and references therein.

\medskip

Here we collect some facts about the groups $\MD_n$, that will be useful later. 


\mdef {\bf Proposition}.   \label{pr:MDcut}
For any $i<n$, if we omit the generators $r_i, s_i$ then the subgroup $\MD^{(i)}_n$ of $\MD_n$ generated by the remaining subset obeys
\[
\MD^{(i)}_n \; \cong \; \MD_i \times \MD_{n-i}
\]
{\em Proof}.
 The generators  $r_j,s_j$ with $1\leq j<i$ and $r_k,s_k$ with $i<k\leq n-1$ commute in $\MD_n$, so the natural homomorphism $\MD_i\times \MD_{n-i}\rightarrow \MD_n$ is injective and has image $\MD_n^{(i)}$.  \qed 

\mdef {\bf Proposition}.    \label{pr:MDinc}
For $i \in \{ 0,1,\ldots,m\}$ there is an inclusion  
$$
\phi_i :\MD_n \hookrightarrow \MD_{m+n}
$$ 
given by $r_j \mapsto r_{j+i}$ and $s_j \mapsto s_{j+i}$. 
\\
{\em Proof}. 
 {Note that $j$ here is in $\{1,\ldots, n-1\}$, thus the image is in $\MD_{m+n}$. It is clear that the image under $\phi_i$ of all relations in $\MD_{n}$ are also satisfied in $\MD_{m+n}$.} \qed

\medskip 

\mdef  \label{de:MDcat}
The collection of all the groups $\MD_n$ naturally arranged in a strict monoidal category  
is the mixed doubles category $\MD$. 

\medskip 

A monoidal functor $F:\MD \rightarrow \Mat$ 
is here called a {\em mixed doubles representation} 
(following the nomenclature of spin-chain braid representation, shortened to braid representation, in \cite{MR1X}). 

\medskip

The group 
$\MD_n$ 
is shown in  \cite{BardakovBellingeriDamiani15} to be isomorphic to 
\ppmm{a certain semidirect product}
$\Z^{\binom{n}{2}}\rtimes \Sigma_n$ (see \S\ref{ss:babeda} for details).  
In particular $\MD_n$ is virtually abelian.  Classical Clifford theory \cite{isaacs1994} can therefore be used 
in principle 
to describe the irreducible representations of $\MD_n$. 
In practice this approach is not straightforward. 

Our interest here is 
in mixed doubles representations, as above.  
These are examples of 
\emph{local} representations, i.e. representations associated with a pair of Yang-Baxter operators $(R,S)$ yielding a strict monoidal functor from the category $\MD$ build from the groups $\MD_n$ to the category $\Mat$ of matrices.   
In particular, such representations can be non-semisimple, pushing us into the world of indecomposable, rather than irreducible, constituent representations.  Even for $\Z^{\binom{n}{2}}$ the classification problem for indecomposable representations is wild, so local representations are not easily decomposed.

\mdef 
Here `local 
representation' means in essence firstly that we consider 
representations $ \varrho_n$ for 
a natural-number-indexed sequence of presented groups, and that in each presentation every generator is of the form $\alpha_i$ where $\alpha$ is taken from the same finite set, $i$ ranges over natural numbers, 
and every relation involving \ppmm{two} non-adjacent $i$'s is a commutation. 
Secondly the representation $\varrho_n$ should also include in $\varrho_{n+1}$.

\mdef 
 {It is clear from the relations  $\RRR$ that 
{for $n>1$} 
the automorphism group of $\MD_n$ has a subgroup isomorphic to $\Z/2\Z\times \Z/2\Z$ generated by $s_i\leftrightarrow r_i$ and the simultaneous $(s_i,r_i)\leftrightarrow (s_{n-i},r_{n-i})$ for all $i$.}

\section{Generalised Wreaths and the BaBeDa isomorphism}   \label{ss:babeda}

In this section we define some generalised wreath products, and for one class 
establish an isomorphism with $\MD_n$ ($n \in \N$). 

Both wreaths and our generalisations are examples of semidirect products. 
Let $B,C$ be groups, and $\psi:C\rightarrow Aut(B)$ {$,w\mapsto \psi_w$} be a group homomorphism. Then there is a group structure on $B \times C$ given by 
\beq  \label{eq:defsd0}
(X_1, w_1) \; (X_2, w_2) \;\; = \;\;  (X_1 \psi_{w_1} (X_2), \; w_1 w_2 ) .
\eq 
The group can be denoted $B\rtimes_\psi C$. 

\mdef  \label{pa:conj-auto}
Observe that the subset $\{ (b,1_C) : b \in B \}$ of  $B\rtimes_\psi C$ is a normal subgroup isomorphic to $B$;
and  $\{ (1_B,c) : c \in C \}$ is a subgroup isomorphic to $C$.
We may write $b \in  B\rtimes_\psi C $ for $(b,1_C)$ and so on. 
Thus for example {\em in}   
$B\rtimes_\psi C $ we have  
$
c^{-1} b c = \psi_{c^{-1}}(b)  .
$ 

\ignore{{We will not need the following until \S\ref{ss:linRT}, but it is convenient to establish now. 

\mdef  \label{de:CactonBreps}
Since prepending an automorphism takes a $B$-rep to a $B$-rep, we have an action of $C$ on the set of $B$-representations. In particular it takes irreducibles to irreducibles. But in general let us write $\rho^c$ for the image of $B$-rep $\rho$ under the action of $c \in C$. 
}}

\subsection{Warm-up: wreaths, their conjugacy classes, and their representations}

$\;$ 

\newcommand{\ccc}{cc}  

Notation: Here if $G$ is a group then $\ccc(G)$ denotes the set of conjugacy classes of $G$. 

\mdef  Let $G$ be a group. Recall that the wreath $G\wr \Sym_n$ is the semidirect
product obtained by allowing $\Sym_n$ to act on $n$ copies of $G$ by permuting the order of the copies. 
Obviously elements of $G^n$ take the form of $n$-tuples of elements of $G$. 
Given an element $g\in G$ we write $g_i$ for the element 
$(1,1,...,1,g,1,1,...1) \in G^n$ (the $g$ in the $i$-th position). 
Then the action of $w \in \Sym_n$ is given by $\Psi_w (g_i) = g_{w(i)}$. 

\mdef Recall further that there is a natural `bird-track' calculus (cf. e.g. \cite{Cvitanovic,MartinWoodcockLevy00}) for  $G\wr \Sym_n$ which decorates the strings of the usual diagram calculus for $\Sym_n$ with elements of $G$. 
Writing $\tau \in G$ for a generating element (and $\sigma_1 \in \Sym_n$ as usual), then the calculus looks like this:
\[
\setlength{\unitlength}{0.00525in}%
\begin{picture}(410,120)(50,330)            
\thicklines
\put(100,420){\line( 0,-1){ 80}}
\put(140,420){\line( 0,-1){ 80}}
\put(180,420){\line( 0,-1){ 80}}
\put(220,420){\line( 0,-1){ 80}}
\put( 95,375){\framebox(10,10){}}
\put( 20,365){\makebox(0,0)[lb]{\raisebox{0pt}[0pt][0pt]{$\tau_1= $}}}
\end{picture}
\setlength{\unitlength}{1647sp}
\begingroup\makeatletter\ifx\SetFigFont\undefined
\def\x#1#2#3#4#5#6#7\relax{\def\x{#1#2#3#4#5#6}}%
\expandafter\x\fmtname xxxxxx\relax \def\y{splain}%
\ifx\x\y   
\gdef\SetFigFont#1#2#3{%
  \ifnum #1<17\tiny\else \ifnum #1<20\small\else
  \ifnum #1<24\normalsize\else \ifnum #1<29\large\else
  \ifnum #1<34\Large\else \ifnum #1<41\LARGE\else
     \huge\fi\fi\fi\fi\fi\fi
  \csname #3\endcsname}%
\else
\gdef\SetFigFont#1#2#3{\begingroup
  \count@#1\relax \ifnum 25<\count@\count@25\fi
  \def\x{\endgroup\@setsize\SetFigFont{#2pt}}%
  \expandafter\x
    \csname \romannumeral\the\count@ pt\expandafter\endcsname
    \csname @\romannumeral\the\count@ pt\endcsname
  \csname #3\endcsname}%
\fi
\fi\endgroup
\begin{picture}(5112,2724)(601,-2173)
\thicklines 
\put(2101,539){\line( 0,-1){600}}
\put(2101,-61){\line( 1,-1){600}}
\put(2701,-661){\line( 0,-1){600}}
\put(2701,-1261){\line(-1,-1){600}}
\put(2101,-1861){\line( 0,-1){300}}
\put(2701,539){\line( 0,-1){600}}
\put(2701,-61){\line(-1,-1){600}}
\put(2101,-661){\line( 0,-1){600}}
\put(2101,-1261){\line( 1,-1){600}}
\put(2701,-1861){\line( 0,-1){300}}
\put(2026,164){\framebox(150,150){}}
\put(2026,-1036){\framebox(150,150){}}
\put(3301,539){\line( 0,-1){2700}}
\put(3901,539){\line( 0,-1){2700}}
\put(201,-811){\makebox(0,0)[lb] 
{\smash{\SetFigFont{10}{14.4}{rm}\hspace{-.21cm}$\tau_1\sigma_1\tau_1\sigma_1 = \;$}}}
\end{picture}
\]
(all such figures here are taken directly from \cite{MartinWoodcockLevy00}). The calculus allows $\sigma_1 \tau_1 \sigma_1$ to be represented as a bead on the second track:
\beq  \label{eq:beadmove}
\setlength{\unitlength}{0.0055in}%
\begin{picture}(365,120)(55,660)
\thicklines
\put( 60,780){\line( 1,-1){ 40}}
\put(100,740){\line( 0,-1){ 40}}
\put(100,700){\line(-1,-1){ 40}}
\put(100,780){\line(-1,-1){ 40}}
\put( 60,740){\line( 0,-1){ 40}}
\put( 60,700){\line( 1,-1){ 40}}
\put(140,780){\line( 0,-1){120}}
\put(180,780){\line( 0, 1){  0}}
\put(180,780){\line( 0,-1){120}}
\put(300,780){\line( 0,-1){120}}
\put(340,780){\line( 0,-1){120}}
\put(380,780){\line( 0,-1){120}}
\put(420,780){\line( 0,-1){120}}
\put( 55,725){\line( 1, 0){ 10}}
\put( 65,725){\line( 0,-1){ 10}}
\put( 65,715){\line(-1, 0){ 10}}
\put( 55,715){\line( 0, 1){ 10}}
\put(335,725){\line( 1, 0){ 10}}
\put(345,725){\line( 0,-1){ 10}}
\put(345,715){\line(-1, 0){ 10}}
\put(335,715){\line( 0, 1){ 10}}
\put(240,720){\makebox(0,0)[lb]{\raisebox{0pt}[0pt][0pt]{ =}}}
\end{picture}

\eq 
(diagrammatically this can be seen as sliding the bead up along its track, through the crossing, and then straightening the simple double crossing) whereupon a general element can be drawn in the form:
\[
\setlength{\unitlength}{0.0055in}%
\begin{picture}(240,180)(50,590)
\thicklines
\put(240,780){\line( 0,-1){100}}
\put(235,735){\line( 1, 0){ 10}}
\put(245,735){\line( 0,-1){ 10}}
\put(245,725){\line(-1, 0){ 10}}
\put(235,725){\line( 0, 1){ 10}}
\put(280,780){\line( 0,-1){100}}
\put(275,735){\line( 1, 0){ 10}}
\put(285,735){\line( 0,-1){ 10}}
\put(285,725){\line(-1, 0){ 10}}
\put(275,725){\line( 0, 1){ 10}}
\put(120,780){\line( 0,-1){100}}
\put(115,735){\line( 1, 0){ 10}}
\put(125,735){\line( 0,-1){ 10}}
\put(125,725){\line(-1, 0){ 10}}
\put(115,725){\line( 0, 1){ 10}}
\put( 75,735){\line( 1, 0){ 10}}
\put( 85,735){\line( 0,-1){ 10}}
\put( 85,725){\line(-1, 0){ 10}}
\put( 75,725){\line( 0, 1){ 10}}
\put( 75,755){\line( 1, 0){ 10}}
\put( 85,755){\line( 0,-1){ 10}}
\put( 85,745){\line(-1, 0){ 10}}
\put( 75,745){\line( 0, 1){ 10}}
\put(195,760){\line( 1, 0){ 10}}
\put(205,760){\line( 0,-1){ 10}}
\put(205,750){\line(-1, 0){ 10}}
\put(195,750){\line( 0, 1){ 10}}
\put(195,720){\line( 1, 0){ 10}}
\put(205,720){\line( 0,-1){ 10}}
\put(205,710){\line(-1, 0){ 10}}
\put(195,710){\line( 0, 1){ 10}}
\put( 60,680){\line( 1, 0){240}}
\put(300,680){\line( 0,-1){ 60}}
\put(300,620){\line(-1, 0){240}}
\put( 60,620){\line( 0, 1){ 60}}
\put( 80,620){\line( 0,-1){ 40}}
\put(120,620){\line( 0,-1){ 40}}
\put(160,620){\line( 0,-1){ 40}}
\put(200,620){\line( 0,-1){ 40}}
\put(240,620){\line( 0,-1){ 40}}
\put(280,620){\line( 0,-1){ 40}}
\put( 80,780){\line( 0,-1){100}}
\put(200,780){\line( 0,-1){100}}
\put(160,780){\line( 0,-1){100}}
\put(195,740){\line( 1, 0){ 10}}
\put(205,740){\line( 0,-1){ 10}}
\put(205,730){\line(-1, 0){ 10}}
\put(195,730){\line( 0, 1){ 10}}
\put(170,645){\makebox(0,0)[lb]{\raisebox{0pt}[0pt][0pt]{P}}}
\end{picture}

\]
where $P$ is any undecorated permutation diagram. 
The `straightening' of a general element into the above format looks like:
\beq  \label{eq:straighten-eg}
\setlength{\unitlength}{0.0045in}%
\begin{picture}(525,360)(75,420)
\thicklines
\put( 80,780){\line( 1,-1){200}}
\put(120,780){\line(-1,-1){ 40}}
\put( 80,740){\line( 0,-1){ 40}}
\put( 80,700){\line( 1,-1){ 40}}
\put(120,660){\line( 0,-1){ 40}}
\put(120,620){\line( 1,-1){ 40}}
\put(160,740){\line(-1,-1){ 80}}
\put( 80,660){\line( 0,-1){120}}
\put( 80,540){\line( 0, 1){  5}}
\put(160,740){\line( 1, 1){ 40}}
\put(160,780){\line( 1,-1){ 40}}
\put(240,780){\line( 0,-1){120}}
\put(240,660){\line(-1,-1){ 40}}
\put(200,620){\line( 0,-1){ 80}}
\put(200,740){\line( 0,-1){ 40}}
\put(200,700){\line(-1,-1){ 40}}
\put(160,660){\line( 0,-1){ 40}}
\put(160,620){\line(-1,-1){ 40}}
\put(120,580){\line( 0,-1){ 40}}
\put(160,580){\line( 0,-1){ 40}}
\put(280,780){\line( 0,-1){160}}
\put(280,620){\line(-1,-1){ 40}}
\put(240,580){\line( 0,-1){ 40}}
\put(240,540){\line( 0, 1){  5}}
\put(280,580){\line( 0,-1){ 40}}
\put( 75,725){\line( 1, 0){ 10}}
\put( 85,725){\line( 0,-1){ 10}}
\put( 85,715){\line(-1, 0){ 10}}
\put( 75,715){\line( 0, 1){ 10}}
\put( 75,645){\line( 1, 0){ 10}}
\put( 85,645){\line( 0,-1){ 10}}
\put( 85,635){\line(-1, 0){ 10}}
\put( 75,635){\line( 0, 1){ 10}}
\put( 75,565){\line( 1, 0){ 10}}
\put( 85,565){\line( 0,-1){ 10}}
\put( 85,555){\line(-1, 0){ 10}}
\put( 75,555){\line( 0, 1){ 10}}
\put(400,780){\line( 1,-1){ 40}}
\put(440,740){\line( 0,-1){ 40}}
\put(440,700){\line(-1,-1){ 40}}
\put(400,660){\line( 1,-1){ 80}}
\put(440,780){\line(-1,-1){ 40}}
\put(400,740){\line( 0,-1){ 40}}
\put(400,700){\line( 1,-1){ 40}}
\put(440,660){\line(-1,-1){ 40}}
\put(400,620){\line( 0,-1){ 40}}
\put(395,725){\line( 1, 0){ 10}}
\put(405,725){\line( 0,-1){ 10}}
\put(405,715){\line(-1, 0){ 10}}
\put(395,715){\line( 0, 1){ 10}}
\put(480,780){\line( 1,-1){ 40}}
\put(520,740){\line( 0,-1){120}}
\put(520,780){\line(-1,-1){ 40}}
\put(480,740){\line( 0,-1){120}}
\put(480,620){\line(-1,-1){ 80}}
\put(400,540){\line( 0,-1){ 80}}
\put(400,580){\line( 1,-1){ 80}}
\put(480,500){\line( 0,-1){ 40}}
\put(480,580){\line( 1,-1){120}}
\put(520,620){\line( 0,-1){ 40}}
\put(520,580){\line(-1,-1){ 80}}
\put(440,500){\line( 0,-1){ 40}}
\put(560,780){\line( 0,-1){240}}
\put(560,540){\line(-1,-1){ 40}}
\put(520,500){\line( 0,-1){ 40}}
\put(600,780){\line( 0,-1){280}}
\put(600,500){\line(-1,-1){ 40}}
\put(395,520){\line( 1, 0){ 10}}
\put(405,520){\line( 0,-1){ 10}}
\put(405,510){\line(-1, 0){ 10}}
\put(395,510){\line( 0, 1){ 10}}
\put(395,500){\line( 1, 0){ 10}}
\put(405,500){\line( 0,-1){ 10}}
\put(405,490){\line(-1, 0){ 10}}
\put(395,490){\line( 0, 1){ 10}}
\put(350,670){\makebox(0,0)[lb]{\raisebox{0pt}[0pt][0pt]{=}}}
\end{picture}
\begin{picture}(525,360)(75, 20)
\thicklines
\put(355,360){\line( 1, 0){ 10}}
\put(365,360){\line( 0,-1){ 10}}
\put(365,350){\line(-1, 0){ 10}}
\put(355,350){\line( 0, 1){ 10}}
\put(435,370){\line( 1, 0){ 10}}
\put(445,370){\line( 0,-1){ 10}}
\put(445,360){\line(-1, 0){ 10}}
\put(435,360){\line( 0, 1){ 10}}
\put(435,350){\line( 1, 0){ 10}}
\put(445,350){\line( 0,-1){ 10}}
\put(445,340){\line(-1, 0){ 10}}
\put(435,340){\line( 0, 1){ 10}}
\put(320,380){\line( 0,-1){ 60}}
\put(360,380){\line( 0,-1){ 60}}
\put(400,380){\line( 0,-1){ 60}}
\put(440,380){\line( 0,-1){ 60}}
\put(480,380){\line( 0,-1){ 60}}
\put(520,380){\line( 0,-1){ 60}}
\put(320,320){\line( 2,-1){200}}
\put(360,320){\line( 2,-5){ 40}}
\put(400,320){\line(-2,-5){ 40}}
\put(440,320){\line(-6,-5){120}}
\put(480,320){\line(-2,-5){ 40}}
\put(520,320){\line(-2,-5){ 40}}
\put(280,305){\makebox(0,0)[lb]{\raisebox{0pt}[0pt][0pt]{= }}}
\put(185,305){\makebox(0,0)[lb]{\raisebox{0pt}[0pt][0pt]{ = .....}}}
\end{picture}

\eq 
- i.e. it can be done by `bead-sliding' along the track (the first step inserts the identity in the form of a double crossing so that (\ref{eq:beadmove}) can be applied).

\mdef 
Observe that using the diagrams in (\ref{eq:beadmove}-\ref{eq:straighten-eg}) it becomes easy to work out the conjugacy classes of the group $G\wr\Sym_n$ - and hence (at least if $G$ is finite) 
the size of an index set for irreducible representations. 
One consistently `visual' way to do this is with the diagram-trace graph, as illustrated in Fig.\ref{fig:diagramtrace}. 
In particular note that the cycle structure of the permutation factor 
in an element 
when expressed in the canonical form is an invariant of conjugation
(just as for the symmetric group); 
and that the $G$ elements associated to each cycle can be brought together 
(conjugating by a bead corresponds to sliding the bead around the trace loop)
and this product element is also invariant up to conjugacy class in $G$ 
in the sense that $ab \sim babb^{-1} = ba$ (in Figure~\ref{fig:diagramtrace} this is again moving a bead $b$ around the trace loop). 

Observe that we can organise the cycles into groupings with same-conjugacy class of $G$ elements on them, 
so the overall class can be described by a cycle structure for each $G$ conjugacy class - in other words the set of classes is the set of maps from $cc(G)$ to partitions such that the summed degrees of these partitions is $n$. 
(In our example in Figure~\ref{fig:diagramtrace} there are three cycles. If we suppose that $G$ is generated by $\tau$ only (for example $G=\Z/9\Z$) then   each cycle is here carrying a different $G$-class - the classes of $\tau^3$, $\tau^2$ and 1 respectively, so the class overall can be labelled by the function $3\mapsto (3)$, $2\mapsto (2) $ and $0\mapsto (1)$.)  
That is, an index set for conjugacy classes of $G\wr\Sym_n$ is $\hom(\ccc(G),\Lambda)_n$,
where  
$\Lambda$ is the set of all integer partitions; and the sub-$n$ indicates that we restrict to $f \in \hom(\ccc(G),\Lambda) $ such that $\sum_{c\in \ccc(G)} |f(c)| \; = n$. 
That is:
\beq
\ccc(G\wr\Sym_n) \cong \;   \hom(\ccc(G),\Lambda)_n
\eq 
For example if $G=C_2 \cong \Sym_2$, so that $cc(G) = \{ \{1\} , \{ a \}\}$ 
(taking $C_2 = \langle a | a^2=1\rangle$), 
then the index set consists of $\ccc(G)$ indexed pairs of partitions of total degree $n$. 
Again, a very neat way to see this is with the `diagram trace' as in Fig.\ref{fig:diagramtrace}.

\begin{figure}
\[ \hspace{-1cm}
\raisebox{.2in}{
\includegraphics[width=2.693cm]{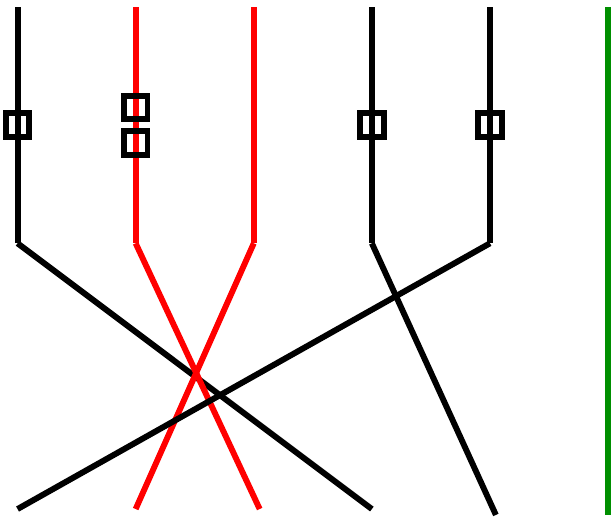}
}
\;\;\;\;\hspace{.1in} \raisebox{.431in}{$\mapsto$} \; \;\;\hspace{.1in} 
\includegraphics[width=5.3cm]{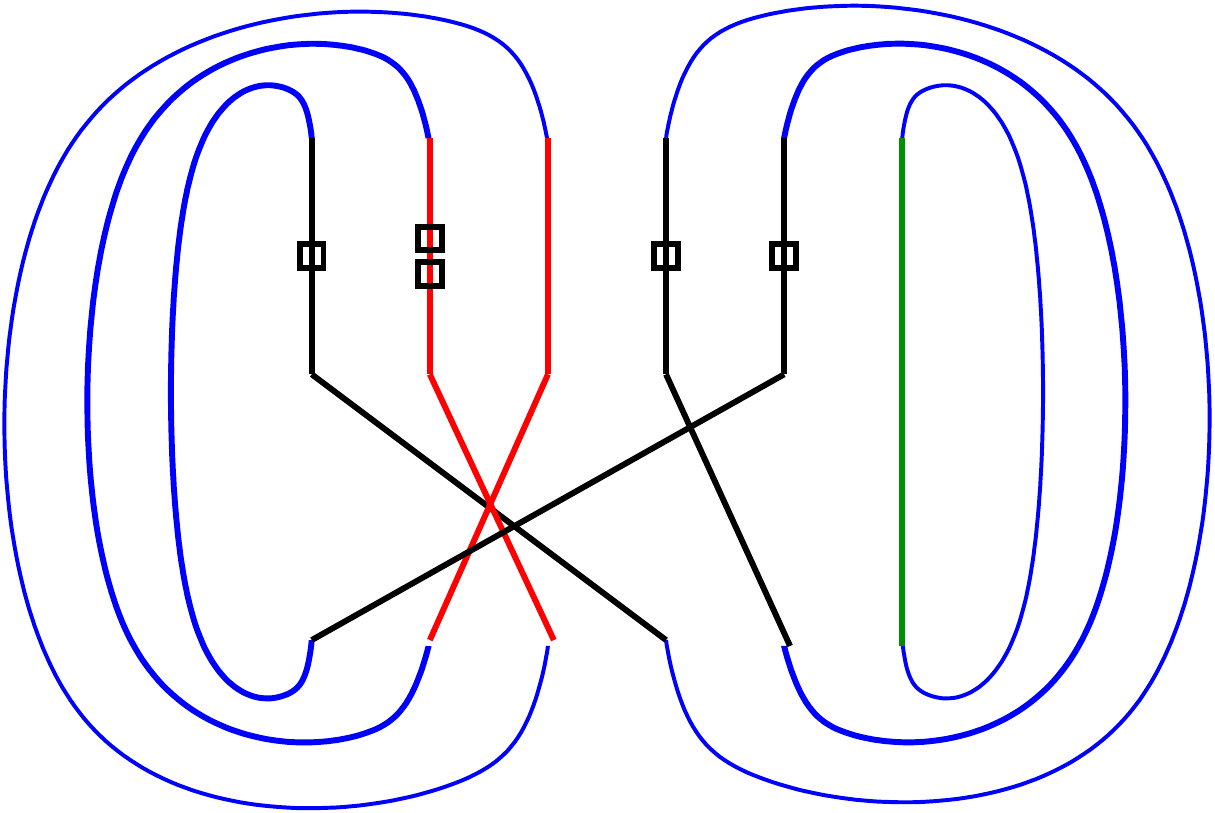}
\;\;\;\;\hspace{.1in} \raisebox{.431in}{$=$} \; \;\;\hspace{.1in} 
\includegraphics[width=5.2cm]{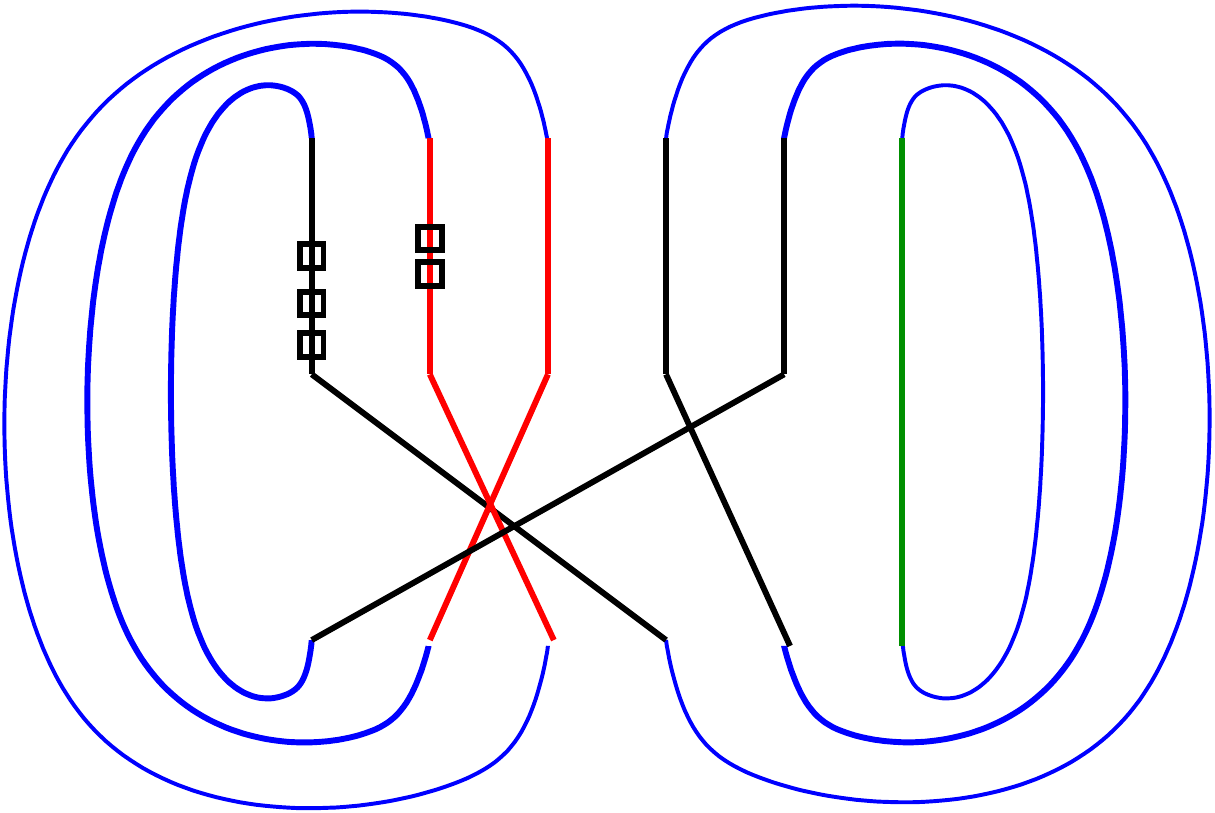}
\]
\caption{Passing from a diagram $d$ 
for an element of $G\wr \Sym_n$
to the diagram-trace graph of the diagram, which shows its conjugacy class. 
Here we have coloured the strands the same colour if they are in the same cycle in the perm in $d$ - the black cycle is the 3-cycle $(145)$ and so on. 
The final identity slides $G$ elements around their cycle until they are together - note that conjugating by, say $d \mapsto \tau_4^{-1} d \tau_4$, cancels the $\tau_4$ from the top and moves it to the bottom (diagrammatically this is as if the $\tau_4$ is  slid around the trace loop). 
Then from the bottom, straightening in the sense of (\ref{eq:straighten-eg}) then moves this factor to the first strand in the 3-cycle. 
\label{fig:diagramtrace} }
\end{figure}
\ignore{{
\mdef 
Representations of $G\wr \Sym_n$ can now be constructed using the outer product representations of $\Sym_n$. This can be seen as an explicit implementation of Clifford theory - a good warm-up for the cases we will need - so we will review briefly here. To keep it brief, we stay here with finite $G$, and indeed let us consider $G=C_d$, the cyclic group of order $d$ for some $d$.  
Recall that an outer product representation ... \ppm{[a bit more to add.]}
}}

Now with these ideas in mind, we are ready to move over to the generalisation that we need. 

\subsection{Generalised wreaths}

$\;$ 

\mdef  
For $n \in \N$
consider the group $\Z^{{n}\choose{2}}$, with generators $x_{ij}$ for $1\leq i<j\leq n$.  
That is,
$\Z^{{n}\choose{2}} 
 = \; \langle x_{ij} , i<j \; | x_{ij} x_{kl}=x_{kl} x_{ij} \rangle
 \; = \; \{ \prod_{i<j} x_{ij}^{a_{ij}} \; | \; a_{ij} \in \Z   \}$.
We may write $g \in  \Z^{{n}\choose{2}} $ as $(x_{12}^{a_{12}},x_{13}^{a_{13}},..)$.
In particular that is $x_{12} = (x_{12}^1,1,1,..,1)$ and $1=(1,1,..,1)$.

\mdef   \label{de:MDprime}
There is an action of the symmetric group $\Sym_n$ on this  
$\Z^{{n}\choose{2}} 
$
by 
$$
\psi_w ( x_{ij} ) = x_{w(i) w(j)} ,
$$
where $x_{ji} = x_{ij}^{-1}$. 
{Observe that this extends to a well-defined action on the whole group.}
We define $\MDD_n$ as the semidirect product   $\Z^{{n}\choose{2}} \rtimes \Sym_n$  with respect to this action. 
Thus as a set 
$\MDD_n$ 
is  $\Z^{{n}\choose{2}} \times \Sym_n$  and the multiplication is  (as for any semidirect product) 
\beq  \label{eq:defsd}
(X_1, w_1) \; (X_2, w_2) \;\; = \;\;  (X_1 \psi_{w_1} (X_2), \; w_1 w_2 ) .
\eq 
This can be visualised analogously to the bird-track calculus for wreaths mentioned above. For example, for $n=6$  we have:
\[
\includegraphics[width=3cm]{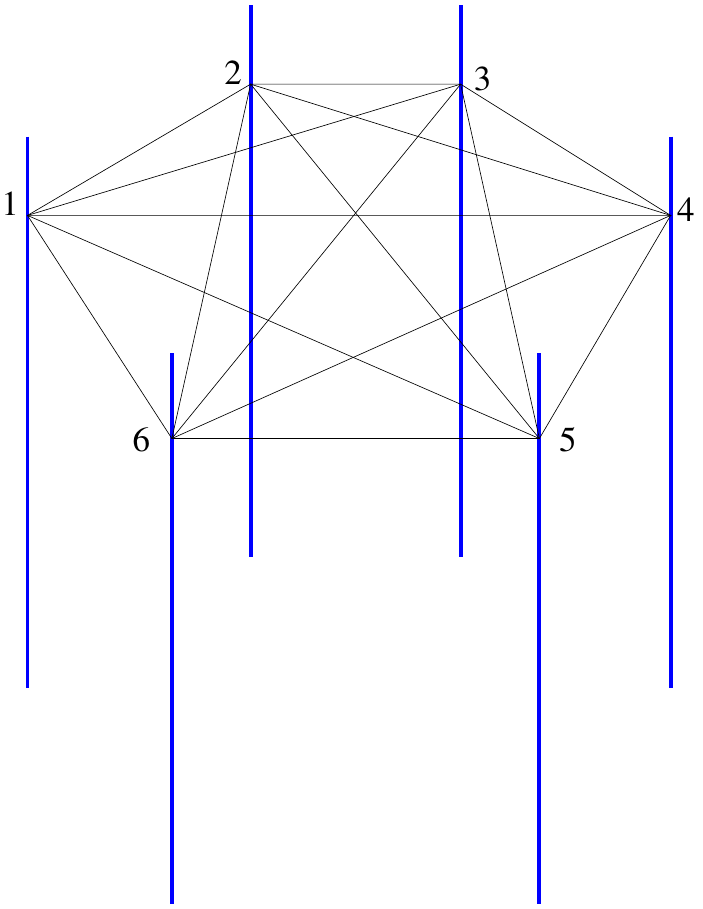}  
\qquad \hspace{2.1cm} 
\includegraphics[width=3cm]{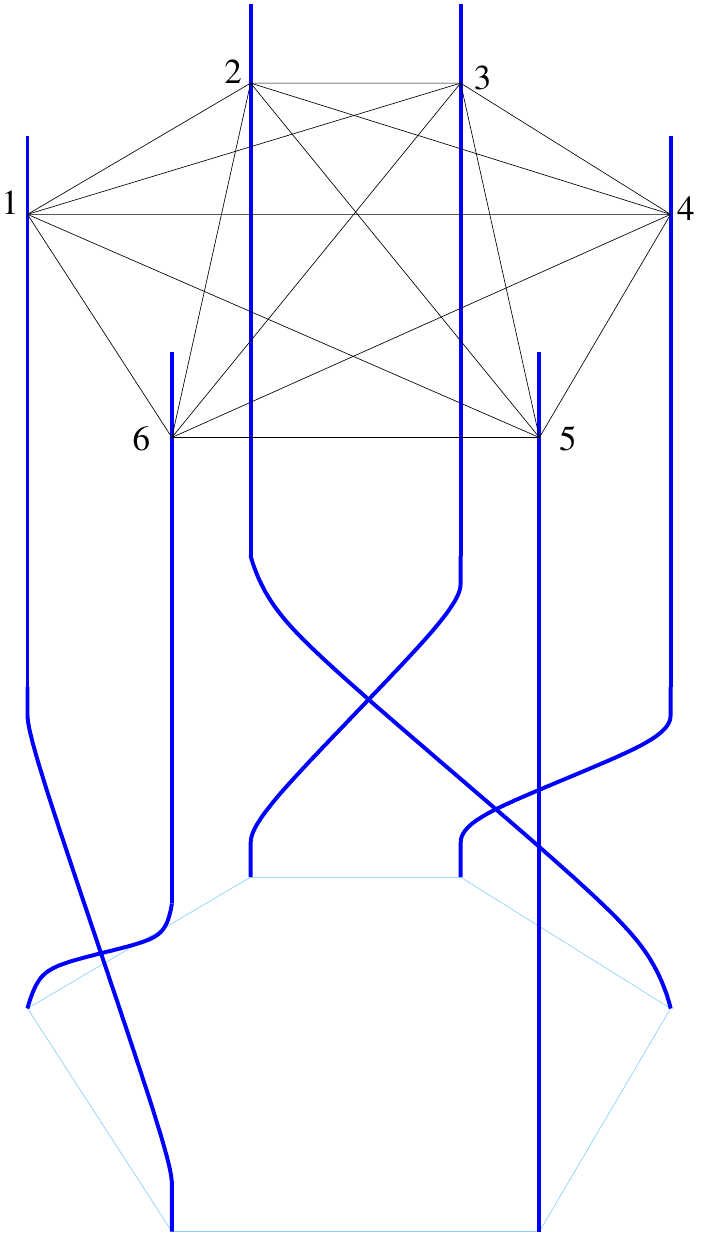}
\]
Here we direct the complete graph $K_n$ by the natural order $i+1 > i$, and so on, on vertices. The directed edge $(i,j)$ (with $i<j$) now carries $x_{ij}$. 
Composition is by stacking and `straightening'. For example locally we have 
\[
\raisebox{1.92cm}{$x_{ij}^n \;\;=\;\;\;$} 
\includegraphics[width=1.352cm]{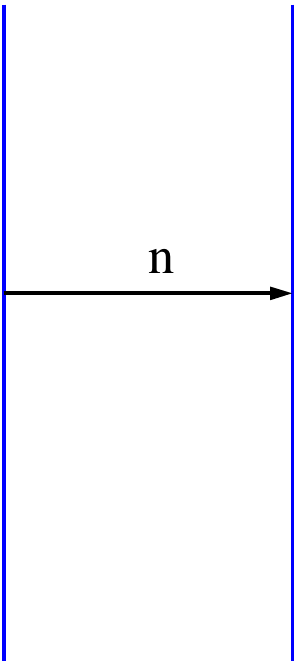}
\qquad \hspace{1.3cm} 
\includegraphics[width=4cm]{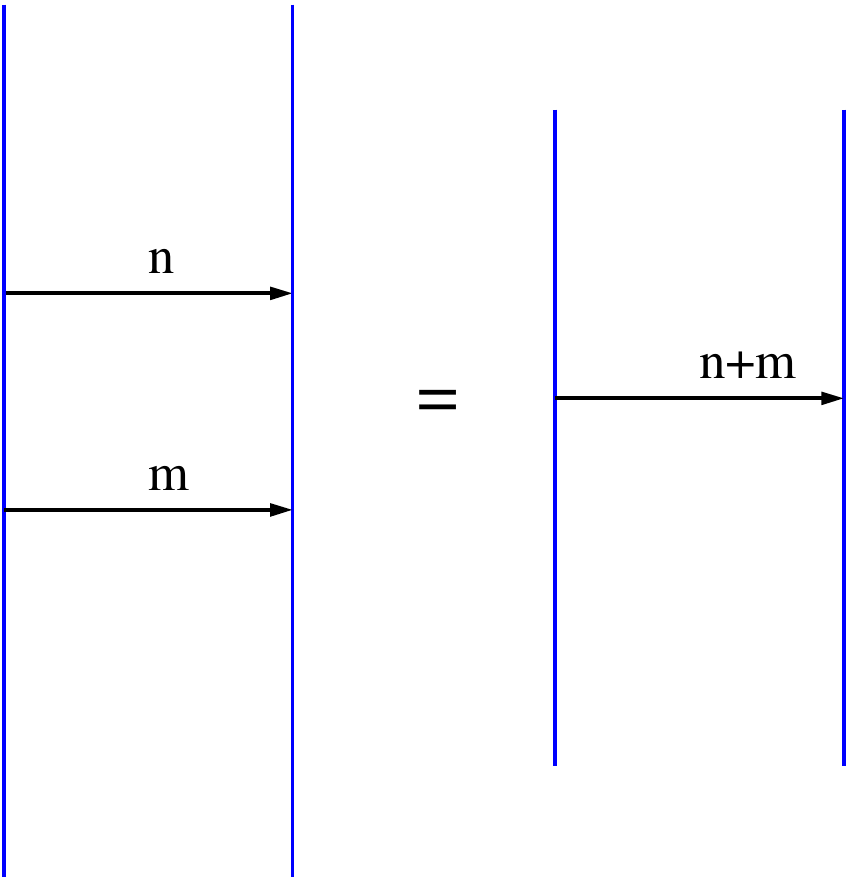}
\hspace{2.2cm}
\includegraphics[width=3.985cm]{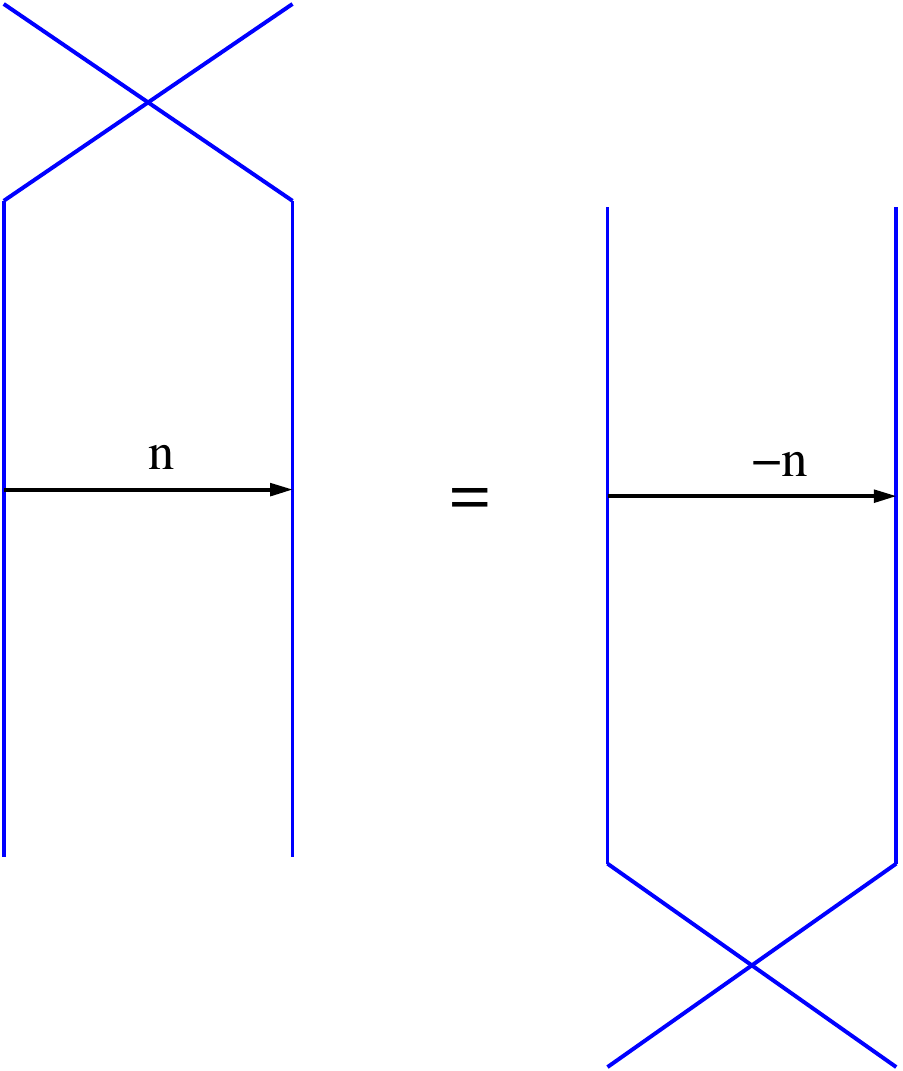}
\]
{where the left-hand side of the  
final equality is the product $w x_{ij}^n$ for $w$ the elementary transposition $(ij)$; and the right-hand side is this straightened into the canonical form 
- note that the reversed arrow is re-reversed by the negation. 
}

\mdef    \label{pres:MD'}
In $\MDD_n$ define $\varsigma_i$ as $(1,\sigma_i) = ((1,1,\ldots,1),\sigma_i)$ 
(where $\sigma_i$ is the elementary transposition in $\Sym_n$)
and $x_{ij} = (x_{ij} , 1)$. 

Observe that  $\MDD_n$ admits a presentation with generators $x_{12}$ and $\varsigma_i$, $i=1,2,\ldots,n-1$ together with the usual symmetric group relations, the relation 
\begin{align}\label{eq:Commute}
    x_{ij}x_{kl}=x_{kl}x_{ij}    
\end{align}
for all allowed $i,j,k,l$ which comes from the copy of $\Z^{{n}\choose{2}}$, and the conjugation relation
\begin{align}\label{eq:Conj}
\varsigma x_{ij}\varsigma^{-1} = x_{\varsigma(i)\varsigma(j)}
\end{align}
for $\varsigma=(1,\varsigma)\in \MDD_n$.

\ignore{
\mdef
More abstractly, 
a set of generating relations for $\MDD_n$ is... 
the standard relations defining $\Sigma_n$, i.e. the braid relations and $\varsigma_i^2=1$ as well as
... $(x_{12}\varsigma_1)^2=1$...\ecr{[probably need to define (somehow) the $x_{ij}$ and impose commutation.]} \ppm{[-why do we need this?]}\ecr{[for the homomorphism?  or so we can verify a rep via relations?]}
\ppm{[right. above we give the complete multiplication table, which is a presentation - albeit not necessarily an efficient one. shall we keep an eye on what we need once @Fiona has started her bit? if we end up -not- unpacking the homo/isomorphism then we'll just possibly need the rep-verify aspect - exactly what we need should hopefully come out in examples.]}}

\subsection{The BaBeDa isomorphism} $\;$ 

\mdef   \label{iso}
Consider 
the map $\varphi$ on the generators of $\MDD_n$  from (\ref{pres:MD'})
(and hence on the free group with these formal generators)
to $\MD_n$ 
given by $\varsigma_i \mapsto s_i$ and $x_{12} \mapsto r_1 s_1$.
It is shown in \cite[Prop.5.5]{BardakovBellingeriDamiani15} that $\varphi$ 
extends to an isomorphism 
$\MDD_n \cong \MD_n$. 

We call $\varphi$ the BaBeDa isomorphism (a portmanteau of the authors' surnames). We also give the proof here to give a self-contained explanation of how the copy of $\Z^{{n}\choose{2}}$ arises. 
For this we need some preliminary results given next.

\ignore{{
\ft{I am not sure what the following is saying...}
Explicitly, we have $\varphi(x_{i,i+1})= r_is_i$: indeed in $\MDD_n$ we have
\[
\varsigma_i\varsigma_{i+1}x_{i,i+1}\varsigma_{i+1}\varsigma_i=x_{i+1,i+2},\quad i<n-2
\]
by (\ref{de:MDprime}),
so under $\varphi$ this becomes, using the relations in $\MD_n$:
\[
\varphi(x_{i+1,i+2})=s_is_{i+1}r_is_is_{i+1}s_i
\stackrel{(\ref{eq:sigmass})}{=}
(r_{i+1}s_is_{i+1})(s_{i+1}s_is_{i+1})
\stackrel{(\ref{eq:sss-ss1})}{=}
r_{i+1}s_{i+1},
\]
so this result follows by induction.}}

\mdef\label{PBn} 
The `virtual braid group' $\VB_n$ is obtained by taking the same formal generators as $L_n$, and taking as relations the leftmost and rightmost relation in \eqref{eq:sigmass}, as well as all relations in \eqref{eq:sss-ss1} and \eqref{eq:farcommute}.  

There is a map $\pi\colon \VB_n\to \Sigma_n$ given by letting both $r_i$ and $s_i$ map to the element $(i,i+1)$. 
This is clearly surjective, since the elementary transpositions $(i,i+1)$ generate $\Sigma_n$. 
Thus defining $\VP_n$ to be the kernel of this map $\pi$ gives that $\VB_n$ breaks down as a semi direct product $\VP_n \rtimes \Sigma_n$.
It is proved in \cite[Th.1]{Bardakov} that $\VP_n$ admits a presentation with generators $\lambda_{kl}$, $1\leq k\neq l\leq n$, and relations
\begin{align}\label{eq:PCommute}
\lambda_{ij}\lambda_{kl}=\lambda_{kl}\lambda_{ij}
\end{align}
and
\begin{align}\label{eq:PQCommute}
\lambda_{ki}\lambda_{kj}\lambda_{ij}=\lambda_{ij}\lambda_{kj}\lambda_{ki}
\end{align}
where distinct letters represent distinct indices.
The group $\VP_n$ is realised as a subgroup of $\VB_n$ by letting
\begin{gather*}
    \lambda_{i,i+1} = s_ir_i^{-1}, \;\; \;\lambda_{i+1,i} = r_i^{-1}s_i,\;\; \; i=1,2,\ldots , n-1\\
    \lambda_{ij} = s_{j-1}s_{j-2}\ldots s_{i+1} \;\lambda_{i,i+1}\;s_{i+1}\ldots s_{j-2}s_{j-1}, \\
    \lambda_{ji} = s_{j-1}s_{j-2} \ldots s_{i+1} \;\lambda_{i+1,i}\;s_{i+1}\ldots s_{j-2}s_{j-1}, \; 1\leq i< j-1 \leq n-1.
\end{gather*}
Observe for example that the first line forces $\pi(\lambda_{i,i+1})=1$ as required.

\mdef \label{PConj}
It is also shown in \cite[Le.1]{Bardakov} 
that 
the subgroup $\VP_n\subset \VB_n$ satisfies the following conjugation relation
\[
\iota(s)\lambda_{ij} \iota(s)^{-1} = \lambda_{s(i)s(j)}
\]
where $\iota$ is the natural section of $\pi\colon \VB_n\to \Sigma_n$.

\begin{proof}(of \eqref{iso})    
Consider the mapping $\hat{\varphi}$ from generators of $\MD_n$ to generators of $\MDD_n$ given by 
$s_i\mapsto \varsigma_i$, $\; r_i \mapsto \varsigma_i x_{i,i+1} $.
We will prove that both $\varphi$ and $\hat{\varphi}$ lift to well defined homomorphisms $\varphi\colon \MDD_n\to \MD_n$, and $\hat{\varphi}\colon \MD_n\to \MDD_n$. It then follows, by confirming that the mappings on generators are indeed pairwise inverse, that the two groups are isomorphic.  

First note 
that all relations involving only the symmetric group generators in each group 
(meaning the $s_i$s in $\MD_n$)
are clearly preserved by both $\varphi$ and $\hat{\varphi}$. 

Observe that for $j\geq i$, \[\hat{\varphi}(\lambda_{ij})= \varsigma_{j-1}\varsigma_{j-2}\ldots \varsigma_{i+1} \;\lambda_{i,i+1}\;\varsigma_{i+1}\ldots \varsigma_{j-2}\varsigma_{j-1} =x_{ij}\] using \eqref{eq:Conj}, and similarly for $i\geq j$, $\hat{\varphi}(\lambda_{ij})= x_{ij}$.
Since relations \eqref{eq:PCommute} and \eqref{eq:PQCommute} are also satisfied by the $x_{ij}$, and $\hat{\varphi}(\lambda_{ij})=x_{ij}$, it follows from \eqref{PBn} that $\hat{\varphi}(r_ir_{i+1}r_i)=\hat{\varphi}(r_{i+1}r_{i}r_{i+1})$, and that $\hat{\varphi}(s_is_{i+1}r_i)=\hat{\varphi}(s_{i+1}s_ir_{i+1})$. 
Clearly $\hat{\varphi}(r_i^2)=\varsigma_i x_{i,i+1}\varsigma_i x_{i,i+1} = x_{i,i+1}^{-1}x_{i,i+1}=1$. 
Looking at the image of the middle relation in \eqref{eq:sigmass}
we have
\begin{align}\label{eq:Immixedrel}\hat{\varphi}(s_ir_{i+1}r_is_{i+1}r_{i}r_{i+1}) &= 
\varsigma_i\; \varsigma_{i+1} \;x_{i+1,i+2}\; \varsigma_i\; x_{i,i+1} \;\varsigma_{i+1} \;\varsigma_{i} \;x_{i,i+1}  \varsigma_{i+1}\;x_{i+1,i+2}\nonumber\\
&= \varsigma_i \;\varsigma_{i+1} \;x_{i+1,i+2}\; \varsigma_i\; x_{i,i+1}\; \varsigma_{i}\;\varsigma_{i}\;\varsigma_{i+1}\; \varsigma_{i}\; x_{i,i+1}  \;\varsigma_{i+1}\;x_{i+1,i+2} \nonumber \\
&=\varsigma_i \;\varsigma_{i+1} \;x_{i+1,i+2}\; \varsigma_i\; x_{i,i+1}\; \varsigma_{i}\;\varsigma_{i+1}\;\varsigma_{i}\; \varsigma_{i+1}\; x_{i,i+1}  \;\varsigma_{i+1}\;x_{i+1,i+2}\nonumber\\
&=\varsigma_i \;\varsigma_{i+1} \;x_{i+1,i+2}\; x^{-1}_{i,i+1}\;\varsigma_{i+1}\;\varsigma_{i}\;  x_{i,i+2} \;x_{i+1,i+2}
\nonumber\\
&= x^{-1}_{i,i+2}\;x^{-1}_{i+1,i+2}\;  x_{i,i+2} \;x_{i+1,i+2} = 1.
\end{align}
Thus $\hat{\varphi}$ is well defined.

We now show that $\varphi$ is well defined. That the conjugation relation \eqref{eq:Conj} in $\MDD_n$ is satisfied in $\MD_n$ follows from \eqref{PConj} by using again that the $x_{ij}$ map to $\lambda_{ij}$. It remains to check that $\varphi(x_{ij}x_{kl})=\varphi(x_{kl}x_{ij})$ for all $1\leq i <j \leq n$ and $1\leq k<l \leq n$. If $i=k$ and $j=l$ this is obvious, and if all indices are distinct, this follows from \eqref{eq:Commute}. We now address the remaining cases.
We have proved in \eqref{eq:Immixedrel} that $\varphi(x_{i,i+2}x_{i+1,i+2})=\varphi(x_{i+1,i+2}x_{i,i+2})$, we now prove it follows that 
\begin{align}\label{eq:jjCommute}
    \varphi(x_{i,j}x_{k,j})=\varphi(x_{k,j}x_{i,j})
\end{align} 
whenever $k\neq i$. This can be seen by observing that, using the conjugation rule \eqref{PConj} and that $\varphi(x_{ij})=\lambda_{ij}$,
\begin{align*}
    \varphi(x_{i,j}x_{k,j}) &= s_{j,i+2} \;\varphi(x_{i,i+2}) \;s_{j,i+2} \;s_{j,i+2}\; s_{k,i+1} \; \varphi(x_{i+1,i+2})\;s_{k,i+1}\;s_{j,i+2}\\
    & = s_{j,i+2} \;s_{k,i+1}\;\varphi(x_{i,i+2})  \; \varphi(x_{i+1,i+2})\;s_{k,i+1}\;s_{j,i+2}\\
& = s_{j,i+2} \;s_{k,i+1}  \; \varphi(x_{i+1,i+2}) \;\varphi(x_{i,i+2})\;s_{k,i+1}\;s_{j,i+2}\\
& = s_{j,i+2}   \;s_{k,i+1} \varphi(x_{i+1,i+2}) \;s_{k,i+1}\;\varphi(x_{i,i+2})\;s_{j,i+2}\\
& = s_{j,i+2}   \;\varphi(x_{k,i+2}) \;s_{j,i+2}\;\varphi(x_{i,j})\\
& =   \;\varphi(x_{k,j})\;\varphi(x_{i,j}) = \varphi(x_{k,j}x_{i,j}). \\
\end{align*}
Similarly, looking at the image of $s_{i+1}r_{i}r_{i+1}s_ir_{i+1}r_i$, which is obtained from the middle identity in \eqref{eq:sigmass} by using \eqref{eq:rr1}, gives that $\varphi(x_{i,i+1}x_{i,i+2})=\varphi(x_{i,i+2}x_{i,i+1})$
which in turn gives that 
\begin{align}\label{eq:iiCommute}
    \varphi(x_{i,j}x_{i,k})=\varphi(x_{i,j}x_{i,k})
\end{align}
 whenever $k\neq j$.
The only remaining case is $\varphi(x_{i,j}x_{k,i})= \varphi(x_{k,i}x_{i,j})$.
This follows by using \eqref{eq:iiCommute} and \eqref{eq:jjCommute} to rewrite  \eqref{eq:PQCommute} as
\[
\varphi(x_{k,j}x_{k,i}x_{i,j}) = \varphi(x_{k,j}x_{i,j}x_{k,i})
\]
and cancelling on the left to obtain
\[
\varphi(x_{k,i}x_{i,j}) = \varphi(x_{i,j}x_{k,i}).
\]
This gives that $\varphi$ is well defined.
\end{proof}

\medskip 


\section{Mixed doubles representations}

A mixed doubles ($\MD$) representation is a strict monoidal functor 
$F:\MD \rightarrow \Mat$.

\mdef Recall that the braid category $\Bcat$ is the 
strict monoidal category (SMC) of braid groups $B_n$. 
Here we will by default write $r \in \Bcat(2,2)$ for the elementary braid generator. 

\mdef  \label{de:MD}
The category $\MD$ is defined as follows. It is the strict monoidal category of mixed doubles groups $\MD_n$, analogous to the braid category or the Sym category. 
The object monoid is $(\N_0, +)$, thus it is freely generated by the object 1. 
The monoidal product on morphisms is given by the inclusion $\phi: \MD_m \times \MD_n \rightarrow \MD_{m+n}$ given by lateral juxtaposition - that is 
$a \otimes b = \phi(a,b) = \phi_0(a) \phi_m(b)$ as defined in (\ref{pr:MDinc}). 

\mdef \label{pr:MDgen} {\bf Proposition}. 
(I) Morphisms in the category $\MD$ are monoidally generated by two  
morphisms $s,r \in \MD(2,2)$. 
They obey the relations 
\beq  \label{eq:ss1}  
s^2 = r^2 =1
\eq 
and we have further, defining $s_1 = s \otimes 1$, $s_2 = 1\otimes s$
and so on
\beq 
s_1 s_2 s_1 = s_2 s_1 s_2 ,   \hspace{1cm}
r_1 r_2 r_1 = r_2 r_1 r_2
\eq 
and
\beq 
r_1 s_2 s_1 = s_2 s_1 r_2 ,   \hspace{1cm}
s_1 r_2 r_1 = r_2 r_1 s_2  
\eq 
(II) These generators and relations give a presentation.

\medskip

\noindent {\em Proof}. 
(I) Note from (\ref{eq:presentMD}) that each group $\MD(n,n)$ is generated by the corresponding $s_i$'s and $r_i$'s. 
Their images at fixed $n$ are 
$1 \otimes  \cdots \otimes1 \otimes s \otimes 1 \otimes \cdots\otimes 1$ 
($s$ in the $i$-th position) 
and similarly for $r_i$.
\\
(II){For a fixed $n$, the relations in the group presentation for $\MD(n,n)$ given in \eqref{eq:sigmass} - \eqref{eq:sss-ss1} are obtained from the given relations by taking the monoidal product with an appropriate number of identities on the left and the right. The far commutation relations given in \eqref{eq:farcommute} arise from the interchange law for monoidal categories: for morphisms $f,g$ we have $f\otimes g=(1\otimes f)(g\otimes 1) =(g\otimes 1) (1\otimes f)$. } \qed 

\mdef It follows from (\ref{pr:MDgen}) 
above that a strict monoidal functor $F:\MD \rightarrow \Mat$ is completely determined by the images 
$ R = F(r)$ and $S = F(s)$. 
Furthermore, a useful partial classification of such functors is by the value of $F(1) \in \N$, called the {\em rank} of $F$.

If $F(1) = N$, say, then $F$ is also a functor $F: \MD \rightarrow \Mat^N$ 
(the full monoidal subcategory of $\Mat$ generated by the object $N$ - which we re-declare as the object 1 in $\Mat^N$, so that this category has object monoid $(\N_0, +)$ on the nose). 

Note that such a functor $F$ is therefore also completely determined by $F(r)$ and $F(x =rs)$. 

\mdef   \label{de:Wangian} 
Observe that if $F': \Bcat \rightarrow \Mat$ is an involutive
braid representation
then $F(r) = F'(r)$ and $F(s) = F'(r)$ gives a 
loop braid
representation. 
A
loop braid
representation of this type, i.e. with $F(r)=F(s)$, is called {\em Wangian}, after Zhenghan Wang who pointed this out during the writing of \cite{KMRW}. 
In particular any Wangian representation is a representation of $\MD$. 
Note that this is then also a symmetric/$\Sym$ representation. 
Hence:

\mdef {\bf{Proposition}}.   \label{pr:Wangian}
Wangian $\MD$ representations yield $\MD_n$ representations for all $n$ that are always semisimple over the field $\C$. 
\qed

\subsection{Notions of equivalence of representations/strict monoidal functors} $\;$ 

Recall that we work by default over the complex field. 

Classification of strict monoidal functors 
$F: \catC \rightarrow \Mat \;$
from a natural category (a strict monoidal category with object monoid the free monoid on one generator, such as $\Bcat$ or $\MD$) 
is a branch of 
higher representation theory, and as such there is not a canonical notion of equivalence, in contrast to ordinary representation theory of an algebra. 
Instead we have various partial equivalence relations, each of which is more or less
useful depending on the situation. 

In this section we discuss notions of equivalence. The section has a `bootstrap problem' in that the best way to understand the various notions is through examples,
but most of our key examples will arrive later. 
For this reason it may be best to treat this as a reference section, and dip into it only when a definition is needed.

\mdef 
The most 
basic classification is according to the `rank', the value of $F(1)$. 
Fixing the rank $N$, we then consider the target to be $\Mat^N$, the full subcategory
of $\Mat$ generated by its object $N$ (we rename $N$ as 1 in $\Mat^N$, so that 
$F:\catC\rightarrow \Mat^N$ has $F(1)=1$, so note that the term `rank' applies to $F(1)$ for the target $\Mat$ specifically). 

\mdef  \label{de:local}
Fixing the rank $N$, local equivalence means that $F\sim F'$ 
whenever there is an invertible matrix $A \in \Mat^N(1,1) = \Mat(N,N)$ such that 
for $a \in \catC(m,n)$ then
$F'(a) = A^{\otimes m} F(a) (A^{-1})^{\otimes n}  $.  
\\ 
This is precisely a monoidal natural equivalence between the functors $F,F'$, in the sense of \cite{MRT25}.

For example an R-matrix $F(\sigma)$ is conjugated by $A \otimes A$.

\mdef \label{de:l-equiv}
For $l \in \N$ then $l$-equivalence means that there are invertible matrices 
$A_n \in \Mat^N(n,n)$ for $n\leq l$ such that  
$ F'(a) = A_m F(a) A_n^{-1}$ 
for $a \in \catC(m,n)$ with $m,n\leq l$.   

For example  
$l$-equivalence of braid representations says that  the representations of $B_n$ are isomorphic for every $n \leq l$.

(I) Note that local equivalence implies $\infty$-equivalence. 

(II) Note that 2-equivalence does not imply $\infty$-equivalence \cite{MRT25}.

\mdef \label{de:l-ch-equiv}
For $l \in \N$ then {\em  $l$-character-equivalence} 
means that 
the character of $F(\catC_n = \catC(n,n))$ is the same as for $F'$ for all 
$n\leq l$. 

(I) Note that $l$-equivalence implies $l$-character-equivalence. 

(II) Note that if each $\catC_n$ is a finite group then $l$-character-equivalence implies $l$-equivalence (since characters determine simple content and the group algebra is semisimple over $\C$).

(III) Continuing with the case of each $\catC_n$  a finite group, 
if there are a variety of $R$-matrices obtained by variation of a continuous parameter, then they are in the same $\infty$-equivalence class. 

Note that this is not true if each $\catC_n$ is not finite, such as in the $\Bcat$ case or $\MD$ case (although there are related cardinality-sensing differences between these cases, as we shall see).

\mdef  There are important (for efficient classification) weaker notions of equivalence (which in general do not imply even 2-equivalence), which might better be termed symmetries rather than equivalences. 
The point is that if there is a simple and manifest transformation that turns 
a solution into another solution then we need only classify one per orbit of this transformation. For more detail than we give here, see for example \cite[\S2.2 {\em et seq}]{MRT25}. Otherwise, read on.

\mdef\label{DS equiv for MD} As an example of such a symmetry we may extend \emph{Doiku-Smoktunowitz} (DS) equivalence to $\MD$ in the obvious way: let $A\in\Mat^N(1,1)$ be such that $A\otimes A$ commutes with $R=F(r)$ and $S=F(s)$.  Then $R'=(A\otimes I_N)R(A^{-1}\otimes I_N)$ and $S'=(A\otimes I_N)S(A^{-1}\otimes I_N)$ defines another functor $F'$.  In fact, the proof given in \cite{MRT25,DS} for a single $R$-matrix goes through \emph{mutatis mutandis}.  It 
follows that the corresponding representations of $\MD_n$ are equivalent, i.e., $\infty$-equivalent. 

\mdef  
Since $\Mat$ is $\C$-linear, it is convenient to pass to the $\C$-linearisation of $\catC$.
There may then be automorphisms of $\C\catC$ that induce symmetries among representations. 
For example overall sign change in braid or symmetric representations.  

\mdef  \label{pa:rs}
Automorphisms of $\catC$, and compounds, such as: opposite/transpose; inverse; 
{antidiagonal transform (transform matrices by transpose then conjugate by the antidiagonal matrix).}
For example note from \ref{pr:MDgen} that $\MD$ is invariant under:
\\
$\bullet$ $r \leftrightarrow s$. 
\\
$\bullet$ reversing the relations (this can be combined with transpose in $\Mat$ so that 
solutions are invariant under transpose.

\subsection{On non-local but unentangled equivalences} \label{ss:non-loc}

It is an important open problem to understand non-local equivalences. 
This is true already in the braid case, but the hope is that the extra constraints of the MD case will shed new light. 
We start simply with some toy examples. 

\newcommand{\hex}{\mathsf{h}}
\newcommand{\vex}{\mathsf{v}}
\newcommand{\flex}{\mathsf{x}}

Let us write $\hex$ for the non-identity 2x2 permutation matrix. We have 
\[
\hex = \mat 0&1 \\ 1& 0 \tam, \hspace{2cm} 
\vex := 1_2 \otimes \hex = \mat 0&1 \\ 1&0 \\ &&0&1 \\ &&1&0 \tam , \hspace{1cm} 
\flex :=\hex \otimes \hex = \mat 0&0&0&1\\ 0&0&1&0 \\ 0&1&0&0 \\ 1&0&0&0 \tam
\]
- so $\flex$ is local while $\vex$ is non-local but unentangled, with both involutive  - 
and let
\[
R_f(p) = \mat 1 \\ &&p \\ &1/p \\ &&&1 \tam , \hspace{1cm} 
R_a(p) = \mat 1 \\ &&p \\ &1/p \\ &&&-1 \tam 
\]
- both of which give braid reps for all $p\in\C^\times$. 
We have the non-local and local conjugations
\[
\vex.R_f(p).\vex = \mat 0&&&p \\ &1&0 \\ &0&1 \\ \frac{1}{p} &&&0 \tam ,
\hspace{1cm}
\flex.R_f(p).\flex  \; = \; R_f(\frac{1}{p}) , 
\hspace{1cm} 
\]
The first of these is a solution despite being non-locally conjugated; and despite also 
that (as the second result shows) 
$R_f(p)$ does not commute with $\hex\otimes\hex$ in general (specifically unless $p^2 =1$), so it is not of DS type. 
Note for comparison that $\vex.R_a(p).\vex$ is {\em not} a braid representation - more as one might expect. 
$\;$ 
This appears to raise several nice questions, but we will demote these to a later work. 

\mdef 
Equivalences for varieties.  
Our solutions are often members of varieties, so continuously related within their variety, 
although the continuous variation may change characters, and hence isomorphism class in the ordinary sense.  Organising by varieties is certainly a useful tool, but then one must drill down further to determine implications for representation theory. We will consider examples in \S\ref{ss:analyse}. 


\subsection{Towards classification of solutions in rank 2: 
classification of functors $F:\Sym\rightarrow\Mat^2$
}  $\;$ 

\medskip 

In this section we  
prepare 
to classify solutions $F:\MD\rightarrow \Mat^2$ 
with $F(1)=1$  
\ppmm{(note that, unlike the charge-conserving case,  
solutions with $F(1)=a>1$ are just the same as solutions $F:\MD\rightarrow\Mat^{2^a}$, which we address later)}.
That is, solutions where $R = F(r)$ is a 4x4 matrix. 
The classification Theorem itself is in \S\ref{ss:Theorem}.

Since restriction to $R$ only, i.e. forgetting $S$, yields an `involutive' braid representation - i.e. with $R^2 = 1$, we can start the classification by 
recalling the classification of such braid reps. This comes from Hietarinta's rank-2 braid rep classification \cite{Hietarinta1992} by restricting to the involutive cases.

Remark: It is interesting to note that of course this is the same as classifying `symmetric representations' - functors $F:\Sym \rightarrow \Mat^2$. 
In particular 
unitary ones 
have been classified up to $\infty$-equivalence in \cite{LechnerPennigWood}, but as we will see this is not the same as the classification 
up to $H_1$-equivalence that is needed here 
(see in particular Comment~(4) below). 

\mdef {\bf Proposition}.   \label{pr:N2inv}
Up to local equivalence, scalar multiplication by -1, and taking the transpose 
(collectively `$H_1$-equivalence') 
the involutive {rank 2} braid reps are given by the following {five} varieties:
\[  
\mat 1& \\ &1 \\ &&1 \\ &&&1 \tam, \hspace{.251cm} 
\mat 1&-p&p&pq \\ &0&1&q \\ &1&0&-q \\ &&&1  \tam, \hspace{.25cm}
\mat 1&0&0&p \\ &0&1&0 \\ &1&0&0 \\ &&&-1  \tam, \hspace{.25cm}
\mat 1&0&0&0 \\ &0&q&0 \\ &\frac{1}{q}&0&0 \\ &&&\pm1  \tam, \hspace{.25cm}
\mat 0&0&0&1 \\ &1&0  \\ &0&1 \\ 1 &0&0&0  \tam 
\]
{We observe that two of these are obtained from charge-conserving (CC) solutions \cite{MR1X} by adding 'glue' i.e., some entries in the upper right.  Two others are already charge-conserving.} Thus convenient names for these are, respectively: 
\\ \mbox{ } \hspace{-.321cm}
trivial, \hspace{1cm} f-glue-type, \qquad \qquad
a-glue-type, \qquad f/a-slash type, \hspace{.1cm}  
anti-slash type. 

\medskip 

Before proceeding (even to proof!), some comments are in order:
\\
(1)
There is some overlap between the varieties. 
For example f-glue with $p=q=0$ is the same as f-slash with its $q=1$ 
(i.e. they meet in the \emph{flip} solution). 
\\
(2)
Note that parameters play a different role here to the braid rep classification.
Any parameter that varies continuously cannot change the $\infty$-equivalence class, since $\Sym$ is finite
- see \ref{de:l-ch-equiv}(III). 
Thus there are only five such classes here, and only three up to sign. 
\\
(3)
One can compare with the (elementary, partial) classification by $\Sym_2$ character (or 2-equivalence class in the sense of 
\ref{de:l-ch-equiv} or 
\cite{MRT25}): of course the trace of $R$ here is one of 4,2,0,-2,-4. 
Observe that the trivial 
$H_1$-class contains the 4 and -4 character classes. 
The two f-types  and anti-slash contain the 2 and -2 character classes.
And the two a-types together give the 0 character class.
\\
(4)
One can compare with the classification of {\em unitary} representations 
up to $\infty$-equivalence
from \cite{LechnerPennigWood}. These are indexed by pairs of Young diagrams such that the total degree is 2. Thus we have $(2,0)$, $(1^2,0)$, $(1,1), (0, 1^2), (0,2)$, corresponding to the trivial, f/a-slash type with $q=1$, the anti-slash type and overall sign changes.
\\

\medskip 

\noindent   {\em Proof}. 
One approach is simply to list all rank-2 braid reps up to the given equivalences and then impose the involutive condition. 
The ten `non-trivial' varieties of such reps can be deduced from \cite{Hietarinta1992} as follows. 

\newcommand{\Hslashglue}{
\left[ \begin {array}{cccc} k&q&p&s\\ \noalign{\medskip}0&0&k&q
\\ \noalign{\medskip}0&k&0&p\\ \noalign{\medskip}0&0&0&k\end {array}
 \right] 
 }
\newcommand{\Hslash}{
 \left[ \begin {array}{cccc} k&0&0&0\\ \noalign{\medskip}0&0&p&0
\\ \noalign{\medskip}0&q&0&0\\ \noalign{\medskip}0&0&0&s\end {array}
 \right] 
 }

\newcommand{\Haglue}{
 \left[ \begin {array}{cccc} p&0&0&k\\ \noalign{\medskip}0&0&p&0
\\ \noalign{\medskip}0&q&p-q&0\\ \noalign{\medskip}0&0&0&-q\end {array}
 \right]
 }
 
\newcommand{\HslashDS}{
 \left[ \begin {array}{cccc} 0&0&0&p\\ \noalign{\medskip}0&k&0&0
\\ \noalign{\medskip}0&0&k&0\\ \noalign{\medskip}q&0&0&0\end {array}
 \right] 
}

\newcommand{\Hslashgluex}{
 \left[ \begin {array}{cccc} {k}^{2}&-kp&kp&pq\\ \noalign{\medskip}0&0
&{k}^{2}&kq\\ \noalign{\medskip}0&{k}^{2}&0&-kq\\ \noalign{\medskip}0&0
&0&{k}^{2}\end {array} \right]
}
\newcommand{\Hslashglu}{
\left[ \begin {array}{cccc} 1&0&0&1\\ \noalign{\medskip}0&0&-1&0
\\ \noalign{\medskip}0&-1&0&0\\ \noalign{\medskip}0&0&0&1\end {array}
 \right] 
}

\newcommand{\Hising}{
\left[ \begin {array}{cccc} 1&0&0&1\\ \noalign{\medskip}0&1&1&0
\\ \noalign{\medskip}0&-1&1&0\\ \noalign{\medskip}-1&0&0&1\end {array}
 \right]
}
\newcommand{\Height}{
 \left[ \begin {array}{cccc} {p}^{2}+2\,pq-{q}^{2}&0&0&{p}^{2}-{q}^{2}
\\ \noalign{\medskip}0&{p}^{2}-{q}^{2}&{p}^{2}+{q}^{2}&0
\\ \noalign{\medskip}0&{p}^{2}+{q}^{2}&{p}^{2}-{q}^{2}&0
\\ \noalign{\medskip}{p}^{2}-{q}^{2}&0&0&{p}^{2}-2\,pq-{q}^{2}
\end {array} \right]
}

\newcommand{\Hf}{
 \left[ \begin {array}{cccc} {k}^{2}&0&0&0\\ \noalign{\medskip}0&0&kq
&0\\ \noalign{\medskip}0&kp&{k}^{2}-pq&0\\ \noalign{\medskip}0&0&0&{k}
^{2}\end {array} \right]
}
\newcommand{\Ha}{
\left[ \begin {array}{cccc} {k}^{2}&0&0&0
\\ \noalign{\medskip}0&0&kq&0\\ \noalign{\medskip}0&kp&{k}^{2}-pq&0
\\ \noalign{\medskip}0&0&0&-pq\end {array} \right]
}

\[ \hspace{-.2in}
\Hslash , \HslashDS, \Hslashglu, \Hslashglue, \Hslashgluex, 
\]
\[
\Ha, \Haglue, \hspace{.2in} \Hf, 
\]
\[
\Hising, \;\; \Height 
\]

The convenient working names for these varieties are, respectively:
\\ 
slash;  $\;$ anti-slash; $\;$ f-glue-I; $\;$ f-glue-II; $\;$ f-glue-III; 
\qquad 
a; $\;$ a-glue; $\;$ f;  \qquad Ising; $\;$ 8-vertex.
\\ 
Observe that f-glue-I;  and Ising are not involutive; 
f-glue-II is never involutive;
and a; f; and 8-vertex are never involutive except where they touch another variety. 
This brings us to the five varieties as claimed. 
\qed

\mdef  
It is an important question why 
$H_1$ is the right notion of equivalence to use here.
In particular why 
to treat anti-slash separately, since it is DS-equivalent to slash/flip - see \S\ref{ss:non-loc}.  
In short the reason is that we are anticipating using the classification 
as a starting point for other classifications of objects that 
(1) have a restriction 
to this involutive braid case; 
(2) respect $H_1$ equivalence, but not necessarily stronger equivalences.
In particular $\MD$ does not respect DS-equivalence applied only to $R$ in general, so the conjugation 
that takes anti-slash to slash takes a corresponding $S$ to 
an incommensurate matrix. Indeed, DS-equivalence as in \eqref{DS equiv for MD} would have to be simultaneously applied to $R$ and $S$.

We will return to this point when we analyse the results from the main Theorem.

\subsection{Manji calculus}  $\;$ 

\newcommand{\AAA}{\mathtt{A}}
\newcommand{\NNN}{\mathtt{N}}
\newcommand{\RRRR}{\mathtt{R}}

As we see from \ref{pr:N2inv} and its proof, most braid representations can be 
expressed in the CC-with-glue form.  However there is one which is better expressed 
in the anti-slash form - and when we come to extending to $\MD$ this sector will benefit from a 
different calculus. 
In further preparation for the statement and proof of this part of the Theorem it will be 
useful to have a couple of interesting varieties of braid representations to hand which are far 
from CC form. 

\mdef 
Define the `manji matrices':
\[  \hspace{-1.25cm} 
\PP_\pm = \mat
1 \\ &0&\pm 1 \\ &\pm 1&0 \\ &&&1 \tam , 
\quad
\AAA_\pm = \mat 0&&&\pm 1 \\ &1 \\ &&1 \\ \pm 1&&&0 \tam , 
\quad
\NNN_\pm = \mat 0&&1 \\ \pm 1&0 \\ &&0&\pm 1 \\ &1&&0 \tam , 
\quad
\NNN'_\pm = \mat 0&1 \\ &0&&\pm 1 \\ \pm 1&&0 \\&&1&0 \tam
\]
and 
\beq  \label{eq:PANN}
\RRRR_{\pm} = \mat a&b&c&\pm d \\ \pm c&d&\pm a&\pm b \\ \pm b&\pm a&d&\pm c \\ \pm d&c&b&a \tam 
 =  a\PP_{\pm} + d \AAA_{\pm} +c \NNN_{\pm} +b \NNN'_{\pm} 
\eq

\mdef   \label{lem:PANN}
{\bf{Lemma}}.
The matrix $\RRRR_+  =  \RRRR_+(a,b,c,d)$  from (\ref{eq:PANN})
satisfies the Yang--Baxter equation; and hence gives a braid representation whenever invertible, and hence for almost all $a,b,c,d$. 
\medskip
\\
{\em Proof}. In this rank it is a routine calculation. \qed

\subsection{
Rank-2 classification of $\MD$ representations: Theorem and proof}   $\;$ 
\label{ss:Theorem}

To find all rank-2 pairs $(R,S)$ that yield a monoidal functor  
$F:\MD\rightarrow \Mat$ 
with $F(1)=2$
we work up to local
equivalence. We thus
choose an $R$ from Proposition  \ref{pr:N2inv} and then interrogate the relations $\RRR$ to find all possibilities for $S$ from a generic ansatz.

\mdef\label{thm: big theorem} {\bf{Theorem}}. The complete set of strict monoidal functors $F:\MD\rightarrow \Mat$ 
with $F(1)=2$ is given, up to local equivalence, transpose, and overall sign, as follows.

There are a few cases to consider 
according to the choice for $R$. 
We separate out a-slash and f-slash types; and
treat flip separately, so there are 7 cases:
\begin{enumerate}    
\item  \label{caseR=1}
    If $R=I$ then the relations imply that $S_1=S_2$, thus $S=\pm I$.

\item  \label{caseaglue}
    Next suppose that $R$ is $a$-glue-type, i.e. $R=\left[ \begin {array}{cccc} 1&0&0&p\\ \noalign{\medskip}0&0& 1&0
\\ \noalign{\medskip}0& 1&0&0\\ \noalign{\medskip}0&0&0&-1\end {array}
 \right]$
 as in Prop.\ref{pr:N2inv}.  
 We assume here that $p\neq 0$. The case $p=0$ will be covered 
 in case~\ref{case5}  
below.  
For any $p \neq 0$ the set of possible $S$ values is given,
up to $\pm 1$ rescaling, by:
\beq  \label{eq:S-case2} 
    S=\left[ \begin {array}{cccc} 1&0&0&q\\ \noalign{\medskip}0&0& 1&0
\\ \noalign{\medskip}0& 1&0&0\\ \noalign{\medskip}0&0&0&-1\end {array}
 \right]
\eq  
where $q$ can take any value.

\ignore{{ 
Jordan form of $X$ in Case (2): 
 $\left[ \begin {array}{cc} 1&1\\ \noalign{\medskip}0&1\end {array}\right]\oplus \left[ \begin {array}{cc} 1&1\\ \noalign{\medskip}0&1\end {array}\right]$
}}

\item  \label{casefglue}
Next  suppose that  
we are in the $f$-glue case:
\[
R=  \left[ \begin {array}{cccc} 1&-p&p&qp\\ \noalign{\medskip}0&0&1&q
\\ \noalign{\medskip}0&1&0&-q\\ \noalign{\medskip}0&0&0&1\end {array}
 \right]
\]
We assume here that $(p,q) \neq (0,0)$. We will deal with the omitted flip case in 
(\ref{caseFlip}). In fact 
we will assume  that $q\neq 0$
(we are saying that at least one of $p,q$ does not vanish, 
but if $q=0$, so $p\neq 0$, we can get these solutions from those given below by antidiagonal symmetry). 
There are two cases: 
\\ 
If $p\neq -q$ then we are in a Wangian case $S=R$ up to scaling by $\pm 1$.
\\
If $p=-q$ we obtain, up to the usual scaling by $\pm 1$ symmetry,   
\beq   \label{eq:casep-qfglue} 
[R,S]= \left[\left[ \begin {array}{cccc} 1&q&-q&-{q}^{2}\\ \noalign{\medskip}0&0&1
&q\\ \noalign{\medskip}0&1&0&-q\\ \noalign{\medskip}0&0&0&1
\end {array} \right] ,
 \left[ \begin {array}{cccc} 1&s&-s&-{s}^{2}\\ \noalign{\medskip}0&0&1
&s\\ \noalign{\medskip}0&1&0&-s\\ \noalign{\medskip}0&0&0&1
\end {array} \right] \right],
\eq 
where $q\neq 0$ and $s$ can take any value. 
\ignore{{ 
The Jordan form of $X$ is  $\left[ \begin {array}{cccc} 1&1&0&0\\ \noalign{\medskip}0&1&1&0
\\ \noalign{\medskip}0&0&1&0\\ \noalign{\medskip}0&0&0&1\end {array}
 \right].$
 }}

\item  \label{case5}
Next we consider $R$ of the $a$-slash type: 
\[ 
R=\left[ \begin {array}{cccc} 1&0&0&0\\ \noalign{\medskip}0&0&p&0
\\ \noalign{\medskip}0&\frac{1}{p}&0&0\\ \noalign{\medskip}0&0&0&-1
\end {array} \right] 
, \hspace{1cm} p \in \C^\times .
\]
Since we make no assumptions about $p$ except that it is non-zero, 
this will cover the 
$a$-glue case 
omitted from (\ref{caseaglue})
by setting $p=1$.  
\\ 
If   $p^2 \neq 1$   
we get 
$S=\left[ \begin {array}{cccc} 1&0&0&0\\ \noalign{\medskip}0&0&s&0
\\ \noalign{\medskip}0&\frac{1}{s}&0&0\\ \noalign{\medskip}0&0&0&\pm 1
\end {array} \right]$.  
\qquad 
If $p=\pm 1$ we get:
$S=\left[ \begin {array}{cccc} 1&0&0&s\\ \noalign{\medskip}0&0&\pm 1&0
\\ \noalign{\medskip}0&\pm 1&0&0\\ \noalign{\medskip}0&0&0&-1\end {array}
 \right]$. 


\item   \label{casefslash}
Consider $R$ of the $f$-slash type:

$R= \left[ \begin {array}{cccc} 1&0&0&0\\ \noalign{\medskip}0&0&p&0
\\ \noalign{\medskip}0&\frac{1}{p}&0&0\\ \noalign{\medskip}0&0&0&1
\end {array} \right]
$. 
We assume $p\neq 1$ 
(else see case (\ref{caseFlip})), 
but allow $p=-1$.  
{
That is, if $p \neq 1$ 
we get the varieties indicated by 
\beq   \label{eq:casepfslash}
S=  \left[ \begin {array}{cccc} 1&0&0&0\\ \noalign{\medskip}0&0&s&0
\\ \noalign{\medskip}0&\frac{1}{s}&0&0\\ \noalign{\medskip}0&0&0& \pm 1
\end {array} \right]  .
\eq 
}
\ignore{{
We also obtain the solution 
$S=\left[ \begin {array}{cccc} 1&0&0&0\\ \noalign{\medskip}0&0&s&0
\\ \noalign{\medskip}0&\frac{1}{s}&0&0\\ \noalign{\medskip}0&0&0&1
\end {array} \right]$. 
}}

In the case $p=-1$ we  also  
have 
\beq   \label{eq:casep-1fslash}
S=\left[ \begin {array}{cccc} 0&0&0&s\\ \noalign{\medskip}0&\pm1&0&0
\\ \noalign{\medskip}0&0&\pm1&0\\ \noalign{\medskip}\frac{1}{s}&0&0&0
\end {array} \right]  .
\eq 
\ignore{{ 
\\ 
This yields \[X= \left[ \begin {array}{cccc} 0&0&0&s\\ \noalign{\medskip}0&0&\pm 1&0
\\ \noalign{\medskip}0&\pm1&0&0\\ \noalign{\medskip}\frac{1}{s}&0&0&0
\end {array} \right],\] having eigenvalues: $[\pm 1,\pm 1]$.
}}

\item   \label{caseantislash}
Now consider the anti-slash case:
$R=\left[ \begin {array}{cccc} 0&0&0&1\\ \noalign{\medskip}0&1&0&0
\\ \noalign{\medskip}0&0&1&0\\ \noalign{\medskip}1&0&0&0
\end {array} \right] 
$.  
We have several choices for $S$ in this case:
\begin{enumerate}
\setlength\itemsep{1em}
\item
$S = z\,\PP_++(-\varepsilon - z)\,\AAA_+ + x\,\NNN_+ - x\,\NNN'_+ = \begin{bmatrix}
z & -x & x & -\varepsilon - z\\
x & -\varepsilon - z & z & -x\\
-x & z & -\varepsilon - z & x\\
-\varepsilon - z & x & -x & z
\end{bmatrix}$ where $\varepsilon^2=1,\quad x^2 = z^2 + \varepsilon z.$

\item $
S = r\PP_+ - r\AAA_+ + y\NNN_+ + (\varepsilon - y)\NNN'_+ \;=\;
\left[ \begin {array}{cccc}
r & -y+\varepsilon & y & -r\\
\noalign{\medskip} y & -r & r & -y+\varepsilon\\
\noalign{\medskip} -y+\varepsilon & r & -r & y\\
\noalign{\medskip} -r & y & -y+\varepsilon & r
\end {array} \right],$ where
$ \varepsilon^2=1,\; 2r^2 - 2y^2 + 2\varepsilon y - 1=0.
$

\item
$\left[
\begin{array}{cccc}
-r & y & y & -r-\varepsilon\\
-y & r+\varepsilon & r & -y\\
-y & r & r+\varepsilon & -y\\
-r-\varepsilon & y & y & -r
\end{array}
\right],$ where $ \varepsilon^2=1,\quad r^2 - y^2 + \varepsilon r = 0.$  

\end{enumerate}

\item  \label{caseFlip}
    If $R$ is the flip, then we observe that $R$ is invariant under all local symmetries as well as transposition.  Thus we may assume that $S$ is in one of the forms in Proposition 
    \ref{pr:N2inv}.  
    Thus we may 
    treat this case using  
    the $R\leftrightarrow S$ symmetry (\ref{pa:rs}). 
    
    Using this symmetry and the solutions  
    (\ref{caseaglue} - \ref{caseantislash})
    we find that if $R$ is the flip then we have $S$ among flip, 
    anti-slash (consider case (\ref{caseantislash})(a) with $x=0$, \ppmm{$\varepsilon=-1$} and $z=1$), 
    a-slash 
    or $f$-glue type with $p=-q$.

\end{enumerate}

\newcommand{\genericS}{
\left[\begin{array}{cccc}
a  & b  & c  & d  
\\
 e  & f  & g  & h  
\\
 j  & k  & l  & m  
\\
 n  & s  & t  & r  
\end{array}\right]
}

\medskip 

\noindent 
{\em Proof}. Having used local equivalence to write $R$ in each of the forms in the transversal, we then take an entirely general ansatz for $S$:
\beq  \label{eq:genericS} 
S = \genericS
\eq 
We now address the various possibilities for $R$. In most cases there are an 
overwhelming array of constraints to check. But the `SRR-anomaly' 
$$
\An_{SRR} \; := \; S_1R_2R_1-R_2R_1S_2
$$ 
tends to be most manageable, since it is only linear in the general ansatz, while all the $R$-matrix cases are relatively sparse. We may proceed as follows:
\\ 
(\ref{caseR=1}) (trivial-type)  is  elementary.   
\\
(\ref{caseaglue})   (a-glue-type)
Consider the general $S$ and hence the `SRR-anomaly' 
$\An_{SRR} \; = \; S_1R_2R_1-R_2R_1S_2$ in this case:
\newcommand{\ppp}{p}
\newcommand{\pppp}{s}
\newcommand{\qqqq}{t}
$$
\ignore{{
\left[\begin{array}{cccc}
a  & b  & c  & d  
\\
 e  & f  & g  & h  
\\
 j  & k  & l  & m  
\\
 n  & \pppp  & \qqqq  & r  
\end{array}\right],
}}
%
\An_{SRR} \; = \; 
\left[\begin{array}{cccccccc}
0 & -(e +j) \ppp  & 0 & (a  -f  -k) \ppp  & 0 & (a  -g  -l) \ppp  & 0 & (c-b -h -m)\ppp   
\\
 0 & n \ppp  & 0 & e \ppp +\pppp \ppp  & 0 & e \ppp +\qqqq \ppp  & 0 & (g-f  +r)\ppp   
\\
 0 & -n \ppp  & 0 & j \ppp -\pppp \ppp  & 0 & j \ppp -\qqqq \ppp  & 0 & (l-k  -r)\ppp   
\\
 0 & 0 & 0 & n \ppp  & 0 & n \ppp  & 0 & -\pppp \ppp +\qqqq \ppp  
\\
 0 & 0 & 0 & -2 b  & 0 & -2 c  & 0 & 0 
\\
 0 & 2 e  & 0 & 0 & 0 & 0 & 0 & 2 h  
\\
 0 & 2 j  & 0 & 0 & 0 & 0 & 0 & 2 m  
\\
 0 & 0 & 0 & -2 \pppp  & 0 & -2 \qqqq  & 0 & 0 
\end{array}\right]
$$
Vanishing of this anomaly reduces $S$ immediately (up to the usual overall sign) to 
\[
\left[\begin{array}{cccc}
1  & 0  & 0  & d  
\\
 0  & f  & f-r  & 0  
\\
 0  & 1-f  & 1+r-f  & 0  
\\
 0  & 0  & 0  & r  
\end{array}\right]
\]
with trace $2+2r$. 
By case~(\ref{caseR=1}) 
and the $R\leftrightarrow S$ symmetry the trace cannot be 4, so $r=-1$. 
The other braid relation anomalies are now straightforward to compute. 
Vanishing then forces $f=0$,
but the relations are then satisfied, verifying (\ref{eq:S-case2}), and we are done.

\medskip 

\newcommand{\rv}[1]{| #1\rangle}
\newcommand{\lv}[1]{\langle #1 |}

\noindent 
(\ref{casefglue})  (f-glue-type) 
Here considering 
$\An_{SRR}= S_1R_2R_1-R_2R_1S_2$  we proceed as follows:
\[
\lv{222} \; R_2 \; =\; \lv{222}  
,\hspace{1.3cm}
\lv{222} \; R_2 R_1 \; =\; \lv{222}
,\hspace{1.3cm}
\lv{222} \; R_2 R_1 S_2 \; =\; [0,n,0,s,0,t,0,r]   
\] 
\[ 
\lv{222} S_1 = [0,0,0,0,n,s,t,r]
,\hspace{1cm}
\lv{222} S_1 R_2 R_1 = [0,n,0,s-nq,0,t-nq,0,q(nq-t-s)+r]
, 
\]\[
\langle 222 | \; \An_{SRR} \; =\; [0,0,0,-nq,0,-nq, q(nq-t-s)]
\]
so $n=0$ and $t=-s$; 
\[ \hspace{-1cm}
\lv{122} R_2 = \lv{122}  
,\hspace{.61cm}
\lv{122} R_2 R_1 = [0,0,0,0,0,1,0,-q]  
,\hspace{.61cm}
\lv{122} R_2 R_1 S_2 = [0,j-nq,0,k-sq,0,l-tq,0,m-rq]   
\] 
\[ 
\lv{122} S_1 = [0,0,0,0,j,k,l,m]
,\hspace{1cm}
\lv{122} S_1 R_2 R_1 = [0,j,0,k-jq,0,l-jq,0,q(jq-l-k)+m]
, \hspace{1cm} 
\]
\[
\langle 122 | \An_{SRR} \; = [0,nq,0,q(s-j),0,q(t-j), q(jq-l-k+r)]
\]
so $s=j=t=-s=0$ and $r=l+k$. Similarly 
\[
\langle 212 | \An_{SRR} \; = [0,nq,0,q(s-e),0,q(t-e), q(eq-g-f+r)]
\]
\[
\langle 112 | \; \An_{SRR} \; =\; q[0,e+j-nq,0,f+k-a-qs,0,g+l-a-qt,0,h+m-b-c +aq-r q]
\]
So  
$e=0$. 
And $r=l+k$, $r=g+f$ (so $2r=l+k+g+f$), $l=a-g$ and $k=a-f$.
So $2a = l+g+k+f =2r$, so $r=a$ and $m=b+c-h$. 
We have
\[ 
\An_{SRR} \leadsto (p+q) 
\left[\begin{array}{cccccccc}
0 & 0 & 0 & p \left(a -f \right)+b  & 0 & \left(-a +f \right) p +c  & 0 & p \left(b +c -h \right)-c q +2 d  
\\
 0 & 0 & 0 & 0 & 0 & 0 & 0 & \left(-a +f \right) q +h  
\\
 0 & 0 & 0 & 0 & 0 & 0 & 0 & q \left(a -f \right)+c +b -h  
\\
 0 & 0 & 0 & 0 & 0 & 0 & 0 & 0 
\\
 0 & 0 & 0 & 0 & 0 & 0 & 0 & 0 
\\
 0 & 0 & 0 & 0 & 0 & 0 & 0 & 0 
\\
 0 & 0 & 0 & 0 & 0 & 0 & 0 & 0 
\\
 0 & 0 & 0 & 0 & 0 & 0 & 0 & 0 
\end{array}\right]
\] 
Since $S^2 =1$ we now have $a=\pm 1$, so can take $a=1$ up to overall sign. 
\ignore{{
The full anomaly now becomes:
$$ { } \hspace{-1.8cm} 
\left[\begin{array}{cccccccc}
(-p n -e  -j) p  
& (-2 e  q +\left(p^{}- q \right) j -\left(p^{}  - q^{}\right) qn)p  
& (-p^{} s +a  -f  - k)p  
& 2 a p q +b p +b q -2 f p q +\left(p^{2}-p q \right) k -\left(p^{} q -q^{2}\right) ps  
& -p^{2} t +a p -p g -p l  
& -\left(a p +c \right) p +a p q +c q +2 p c -2 p q g -\left(-p^{2}+p q \right) l -
\left(p^{2} q -p \,q^{2}\right) t  
& \left(a p +c \right) p +b p -p^{2} r -p h -p m  
& \left(a p +c \right) p q +\left(b p +d \right) q -\left(a p q +c q \right) q +b p q +d q +2 p d 
-2 p q h -\left(-p^{2}+p q \right) m -\left(p^{2} q -p \,q^{2}\right) r  
\\
 -n p  & -2 e p -\left(q -p \right) e -2 n p q  & e p -s p  & 2 e p q -f p +f q -\left(q -p \right) f -2 s p q  & e p -p t  & -\left(e p +g \right) p +e p q +g q -\left(q -p \right) g -2 p q t  & \left(e p +g \right) p +f p -p r  & \left(e p +g \right) p q +\left(f p +h \right) q -\left(e p q +g q \right) q +f p q +h q -
\left(q -p \right) h -2 p q r  
\\
 -n p  & -2 j p -\left(q -p \right) j -\left(p q -q^{2}\right) n  
 & j p -s p  
 & 2 j p q  
    -\left(p q -q^{2}\right) s  
 & j p -p t  
 & -\left(j p +l \right) p +j p q +l q -\left(q -p \right) l -\left(p q -q^{2}\right) t  & \left(j p +l \right) p +p k -p r  & \left(j p +l \right) p q +\left(p k +m \right) q -\left(j p q +l q \right) q +k p q +m q -
\left(q -p \right) m -\left(p q -q^{2}\right) r  
\\
 0 & -2 n p -2 n q  & n p  & 2 n p q -s p -s q  & n p  & -\left(n p +t \right) p +n p q -t q  & \left(n p +t \right) p +s p  & \left(n p +t \right) p q +\left(s p +r \right) q -\left(n p q +t q \right) q +s p q -r q  
\\
 0 & -n \,q^{2}+e q +j q  & 0 & -q^{2} s -a q +f q +q k  & 0 & -q^{2} t -a q +g q +l q  & 0 & -\left(-a q +c \right) q -b q -q^{2} r +h q +m q  
\\
 0 & n q  & 0 & -e q +s q  & 0 & -e q +t q  & 0 & -\left(-e q +g \right) q -f q +r q  
\\
 0 & n q  & 0 & -j q +s q  & 0 & -j q +t q  & 0 & -\left(-j q +l \right) q -q k +r q  
\\
 0 & 0 & 0 & -n q  & 0 & -n q  & 0 & -\left(-n q +t \right) q -s q  
\end{array}\right]
$$
Noting immediately that $n=0$ and $j=-e$, and thus $e=j=s=t=0$, 
we see that $a=\pm 1$. Up to overall sign we may take $a=1$, and $k=1-f$, 
and $l=1-g$,
and hence $f+g=1$ and $r=1$.
Thus $c+b=h+m$.
}}
Already $S$ is reduced to
\[
\genericS \; \leadsto \; 
\left[\begin{array}{cccc}
1  & b  & c  & d  
\\
 0  & f  & 1-f  & h  
\\
0  & 1-f & f  & m  
\\
0  & 0  & 0  & 1  
\end{array}\right]
\]
Since $S^2 =1$ we have $f^2  -f=0$. 
By the ordinary involutive braid rep classification Prop.\ref{pr:N2inv} and 
the condition $S^2 =1$
(equation (\ref{eq:ss1})) 
the det of the middle 2x2 block must be -1, so $f=0$. 
Thus $b=-c$ and $h=-m$ and $d=bm$. 
The anomaly becomes:
%
%
%
%
%
%
%
%
$$
(p+q)
\left[\begin{array}{cccccccc}
 0  
 & 0    
 &  0  
 &   (p  +b  ) 
 & 0  
 & (c   - p  )  
 & 0    
 & pm-qc  
 +2d  
\\
 0  & 0   &  0  &  0   & 0  & 0   & 0  
 &  (h-q)  
\\
 0  
 & 0  
 &  0  
 &  0   
 &  0  
 & 0   
 & 0  
 & (m+q)  
\\
 0  & 0 & 0   &  0   & 0  
 & 0   & 0    &  0 
\\
 0 & 0  & 0 & 0  & 0 &  0  & 0 & 0  
\\
 0 & 0  & 0 &  0  & 0 &  0  & 0 & 0  
\\
 0 & 0  & 0 &  0  & 0 &  0  & 0 & 0  
\\
 0 & 0 & 0 & 0  & 0 & 0  & 0 & 0  
\end{array}\right]
$$
%
If $p+q\neq 0$ we have $b=-p$, $c=p$, $h=q$ and $m=-q$, and 
$d=-2pq(q   +p)/(-2(p+q)) = pq$,
and the rep is the Wangian for this $R$.
\\
If $p+q = 0$ then 
this SRR-anomaly now vanishes. 
From the SSR-anomaly we get $b=h$ and $c=m$.  
We have reached
\[   S \leadsto \; 
\left[ \begin {array}{cccc} 1&b&-b& -b^2
\\ \noalign{\medskip}0&0&1&b
\\ \noalign{\medskip}0&1&0&-b
\\ \noalign{\medskip}0&0&0&1\end {array}
 \right]
\]
The pair $[R,S]$ now satisfy all relations as claimed in (\ref{eq:casep-qfglue}). 

\medskip 

\noindent 
(\ref{case5}) (a-slash) 
Again consider $SRR-RRS$ with our generic $S$:
\[
\left[
\begin{array}{cccccccc}
0 & 0 & b (\ppp -1)  & 0 & c (\ppp -1)  & 0 & d (\ppp^{2}-1)  & 0 
\\
 -e (\ppp -1)  & 0 & 0 & 0 & 0 & 0 & h\ppp(\ppp^{}  -1)  & 0 
\\
 -j (\ppp -1)  & 0 & 0 & 0 & 0 & 0 & m\ppp(\ppp^{}  -1)  & 0 
\\
 -n(\ppp^{2} -1)  & 0 & -s\ppp(\ppp^{} -1)   & 0 & -t\ppp(\ppp^{} -1)  & 0 & 0 & 0 
\\
 0 & 0 & 0 & -b\frac{\ppp+1}{\ppp^2} & 0 & -c\frac{\ppp+1}{\ppp^{2}} & 0 & d \frac{\ppp^2 -1}{\ppp^{2}} 
\\
 0 & e\frac{1+\ppp}{\ppp^{2}} & 0 & 0 & 0 & 0 & 0 & h \frac{\ppp+1}{\ppp} 
\\
 0 & j\frac{1+\ppp}{\ppp^{2}} & 0 & 0 & 0 & 0 & 0 & m \frac{\ppp+1}{\ppp} 
\\
 0 & n\frac{1-\ppp^2}{\ppp^{2}}  & 0 & -s\frac{1+\ppp}{\ppp}  & 0 & -t\frac{1+\ppp}{\ppp}  & 0 & 0 
\end{array}
\right]
\]

\noindent
Thus $b=c=e=j=s=t=h=m=0$  (the `8-vertex condition') and so 
\beq   \label{eq:S2-8v}
S^2 = \left[  \begin{array}{cccc}
  a^2 +dn   &         &        &  (a+r)d  \\
            & f^2 +gk  & (f+l)g & \\
            & (f+l)k  & l^2 +gk   \\ 
  (a+r)n    &         &         & r^2+dn
\end{array}
\right]        
\eq 

\noindent 
(I) In case $\ppp^2 \neq 1$:
\\ 
we have   also $d=n=0$ from the SRR-anomaly.   
We can then also take $a=1$ without loss of generality. 
With this simplified form we can now consider also the SSR-anomaly.
We get $fl=0$ and (since $S$ is involutive) $l^2 = f^2$ so $l=f=0$ and $gk=1$
and $r^2 =1$ and we are done.
\medskip \\
(II) In case $\ppp^2 =1$:
\\
then  also    
$lf=dn=df=ln=0$ from SSR and $l^2 = f^2$ from involutivity,
so in fact $l=f=0$ and $gk=1$.
Since $dn=0$ we may put $a=1$ WLOG and $r=\pm 1$. 
We have reached
\[
\left[\begin{array}{cccc}
1  & 0  & 0  & d  
\\
 0  & 0  & g  & 0  
\\
0  & 1/g & 0  & 0  
\\
n  & 0  & 0  &  \pm 1 
\end{array}\right]
\]
(IIi) If $d=n=0$ all constraints are now satisfied.
\\ 
(IIii) If $r=1$ we have $d=n=0$ by involutivity; and then all constraints are satisfied - a subcase of the case~(IIi). 
\\ 
(IIiii) If $r=-1$ then one of $d,n$ can be nonzero;
and then $g=\pm 1$.
We can take  
$d\neq 0$ WLOG; and then 
all constraints are satisfied except finally SSR gives $g\ppp =1$. 

\medskip 

\noindent 
(\ref{casefslash})  (f-slash) 
For the SRR-anomaly  $\An_{SRR}$ here we have:
\[
\left[\begin{array}{cccccccc}
0 & 0 & b \ppp -b  & 0 & c \ppp -c  & 0 & d \,\ppp^{2}-d  & 0 
\\
 -e \ppp +e  & 0 & 0 & 0 & 0 & 0 & h \,\ppp^{2}-h \ppp  & 0 
\\
 -j \ppp +j  & 0 & 0 & 0 & 0 & 0 & m \,\ppp^{2}-m \ppp  & 0 
\\
 -n \,\ppp^{2}+n  & 0 & -\ppp^{2} s +\ppp s  & 0 & -\ppp^{2} t +\ppp t  & 0 & 0 & 0 
\\
 0 & 0 & 0 & \frac{b}{\ppp}-\frac{b}{\ppp^{2}} & 0 & \frac{c}{\ppp}-\frac{c}{\ppp^{2}} & 0 & d -\frac{d}{\ppp^{2}} 
\\
 0 & \frac{e}{\ppp^{2}}-\frac{e}{\ppp} & 0 & 0 & 0 & 0 & 0 & h -\frac{h}{\ppp} 
\\
 0 & \frac{j}{\ppp^{2}}-\frac{j}{\ppp} & 0 & 0 & 0 & 0 & 0 & m -\frac{m}{\ppp} 
\\
 0 & \frac{n}{\ppp^{2}}-n  & 0 & \frac{s}{\ppp}-s  & 0 & \frac{t}{\ppp}-t  & 0 & 0 
\end{array}\right]
\]
Recall that $p\neq 1$ here. 
We see immediately that  
$b=c=e=j=h=m=s=t=0$ (the `8-vertex' condition for $S$), 
whereupon the SRR-anomaly is zero'ed either by $p =-1$ or $d=n=0$.
The SSR anomaly has become:
\[
\left[\begin{array}{cccccccc}
0 & 0 & 0 & 0 & 0 & \frac{a d}{ p }-d g  & d g  p  -a d  & 0 
\\
 0 & \frac{g f}{ p }-f a  & f(a  p  -g) & 0 & 0 & 0 & 0 & 0 
\\
 0 & \frac{l f}{ p }-d  p  n  & 0 & 0 & l(g-a  p )   & 0 & 0 & d(k-  p  r)   
\\
n(a -g  p )  & 0 & 0 & f(r-k  p )  & 0 & \frac{n d}{ p }-f  p  l  & 0 & 0 
\\
 0 & 0 & d  p  n -\frac{l f}{ p } & 0 & a l -\frac{l g}{ p } & 0 & 0 & d r -\frac{k d}{ p } 
\\
 n(g -\frac{a}{ p }) & 0 & 0 & f k -\frac{r f}{ p } & 0 & 0 & f  p  l -\frac{n d}{ p } & 0 
\\
 0 & 0 & 0 & 0 & 0 & \frac{l r}{ p }-k l  & k l  p  -l r  & 0 
\\
 0 & \frac{n k}{ p }-r n  & r n  p  -n k  & 0 & 0 & 0 & 0 & 0 
\end{array}\right]
\]
And $S^2$ is now as in (\ref{eq:S2-8v}), so from $S^2 =1$ we have $l^2 = f^2$.

Note that if $n=0$ we have $f=l=0$ and hence 
we can take 
$a=1$, $dr=-d$ and $gk=1$ and $r^2 =1$ from $S^2=1$;
and $kd/p = -d$ and $d/p = dg$ from $\An_{SSR}$; and hence $d=0$.

\ignore{{ 
\ppm{[delete rest of this para now?]}
From the $SSR$-anomaly we then have $ f(ap-g) =0$ and $dnp^2  =lf$; and
from the SSS-anomaly we  have $ fl(f-l)=0$ and $dng=lfk$ and
$n(a(l-a) +k^2 -lr )=0$.
Since $S^2 =1$ we have 
$f^2+kg =1$, $kg+l^2 =1$, $g(f+l)=k(f+l)=0$
(so either $f=-l$ or $g=k=0$);
and
$a^2 +dn=1$, $nd+r^2 =1$ $ d(a+r) = n(a+r)=0 $
(so either $a=-r$ or $d=n=0$).
\\
If $fl\neq 0$ then $f=l=\pm 1$ and $g=k=0$ and so either $a=0$ (and hence $r=0$) or $f=0$ and $f=l=-1$ - a contradiction. 
If $a=r=0$ then $dn= 1$ and 
here 
\ppm{[complete argument to eliminate this or try different route!]} ... 
So $f=l=0$ and so $dn=0$. \ppm{[--delete to here?]}
\\
}}

\medskip 

\noindent 
(I) If $p=-1$ then: 
\\
(Ii) if $n=0$ 
then as noted 
$f=l=0$; and we can take $a=1$; and  
$gk=1$, 
and $d=0$; and 
all constraints are satisfied, as in (\ref{eq:casepfslash}).
\\ 
(Iii) if $n\neq 0$ we have $g=-a$, $r=-k$, $r=-a$, so $k=a$. 
We deduce $f \neq 0$ so $a=r=k=g=0$, $nd=1$ and $f=l$ and $f^2 =1$. 
All constraints are now satisfied, as in (\ref{eq:casep-1fslash}). 
\medskip 
\\
(II) If $p\neq -1$, so $p^2 \neq 1$,  
then in addition $n=d=0$ (making the SRR-anomaly zero) and so 
$l=f=0$ from above and 
$S$ is CC with $a=1$, WLOG, and $r=\pm 1$ and $gk=1$. 
All constraints are now satisfied,   
and we are done here, as in (\ref{eq:casepfslash}).



\medskip 


\medskip 

\noindent 
(\ref{caseantislash}) 
(Anti-slash) $\;$   
The SRR anomaly here is: 
\[ 
\An_{SRR} = 
\left[\begin{array}{cccccccc}
d -n  & 0 & c -s  & 0 & b -t  & 0 & a -r  & 0 
\\
 h -j  & 0 & g -k  & 0 & f -l  & 0 & e -m  & 0 
\\
 m -e  & 0 & l -f  & 0 & k -g  & 0 & j -h  & 0 
\\
 r -a  & 0 & t -b  & 0 & s -c  & 0 & n -d  & 0 
\\
 0 & d -n  & 0 & c -s  & 0 & b -t  & 0 & a -r  
\\
 0 & h -j  & 0 & g -k  & 0 & f -l  & 0 & e -m  
\\
 0 & m -e  & 0 & l -f  & 0 & k -g  & 0 & j -h  
\\
 0 & r -a  & 0 & t -b  & 0 & s -c  & 0 & n -d  
\end{array}\right]
\] 
so from this, 
writing $\RRRR_+$ from (\ref{eq:PANN}) as $\RRRR_+ = \RRRR_+(a,b,c,d)$, 
and similarly for $\RRRR_-$, we have 
\beq   \label{eq:SSsub1}
S \leadsto  
\left[\begin{array}{cccc}
a  & b  & c  & d  
\\
 e  & f  & g  & h  
\\
 h  & g  & f  & e  
\\
 d  & c  & b  & a  
\end{array}\right]
= \; \RRRR_+(\frac{a+g}{2},\frac{b+h}{2},\frac{c+e}{2},\frac{d+f}{2}) 
 \; +\; \RRRR_-(\frac{a-g}{2},\frac{b-h}{2},\frac{c-e}{2},\frac{-d+f}{2})
\eq 
From $\An_{SSR}$ we then get $h=\pm b$; and $g=\pm a$, $(c-e)(a+g)=0$, $ah=bg$ and $ab=gh$ (and some other identities). 
(While from involutivity of $S$ we get $ad=gf$ and other identities.)

In subcase $h=b$, assuming $b \neq 0$, we get $e=c$, $f=d$, $g=a$, 
whereupon the SSR anomaly vanishes. 
And in fact  $S \leadsto \RRRR_+$  from (\ref{eq:PANN}),  so 
the SSS anomaly also vanishes, by Lem.\ref{lem:PANN}.
\ignore{{
setting
\[  \hspace{-1.25cm} 
\PP_\pm = \mat
1 \\ &0&\pm 1 \\ &\pm 1&0 \\ &&&1 \tam , 
\qquad
\AAA_\pm = \mat 0&&&\pm 1 \\ &1 \\ &&1 \\ \pm 1&&&0 \tam , 
\qquad
\NNN_\pm = \mat 0&&1 \\ \pm 1&0 \\ &&0&\pm 1 \\ &1&&0 \tam , 
\qquad
\NNN'_\pm = \mat 0&1 \\ &0&&\pm 1 \\ \pm 1&&0 \\&&1&0 \tam
\]
We have that 
$S \leadsto \RRRR_+$  
from (\ref{eq:PANN}). 
\beq  \label{eq:PANN}
\RRRR_+ = \mat a&b&c&d \\ c&d&a&b \\ b&a&d&c \\ d&c&b&a \tam 
 =  a\PP_+ + d \AAA_+ +c \NNN_+ +b \NNN'_+
\eq 
{\bf{Lemma}}.
This $\RRRR_+$ satisfies the Yang--Baxter equation, and hence gives a braid representation whenever invertible, and hence for almost all $a,b,c,d$. 
}}
\medskip 

Involutivity of $S$ then gives $(b+c)(a+d)=0$, $a^2 +d^2 = 1-2cb$, $b^2 + c^2=-2ad$.
Thus either 
(I) $b=-c$, $a+d=\pm 1 $ and $ad=bc=-b^2$;
or (II) $a=-d$, $b+c=\pm 1$ and $bc-ad=1/2$. 
\\
In case (I) consider (I1) $a+d=1$. This is (\ref{caseantislash}a) with $\epsilon=-1$.
Then (I2) $a+d=-1$ gives (\ref{caseantislash}a) with $\epsilon=1$.
\\
In case (II) consider (II1) $b+c=1$. This is (\ref{caseantislash}b) with $\epsilon=1$.
Then (II2) $b+c=-1$ gives (\ref{caseantislash}b) with $\epsilon=-1$.

\medskip

Staying with $h=b$ we now check the $b\neq 0$ assumption. Suppose $b=0$, then we have $h=t=j=0$. Here either $a=g=0$ or $d=f$. 
\\
If $a=g=0$ then $c=e=0$ and $d^2 = f^2 =1$ and then all constraints are satisfied. 
We have reached: 
$S = \pm\left[\begin{array}{cccc}
0  & 0  & 0  & \pm 1  
\\
 0  & 1  & 0  & 0 
\\
 0  & 0  & 1  & 0 
\\
 \pm 1  & 0  & 0  & 0  
\end{array}\right] $,
which are special cases of the claimed solutions. 
\\
If $f=d$ then recall that we have $g = \pm a$.
\\
If $g=-a$ then $ac=dc=0$ but $a,d$ are not both zero so $c=0$, so $ad=0$,
so $(a,d) = (0, \pm 1)$ 
{again forcing $e=0$,}
or $(\pm 1 , 0)$ {(also forcing $e=0$, for involutivity)} 
and then all constraints are satisfied.
We have 
$S =\left[\begin{array}{cccc}
0  & 0  & 0  & \pm 1  
\\
 0  & \pm 1  & 0  & 0 
\\
 0  & 0  & \pm 1  & 0 
\\
 \pm 1  & 0  & 0  & 0  
\end{array}\right] $ 
or $S =\left[\begin{array}{cccc}
\pm 1  & 0  & 0  & 0  
\\
 0  & 0  & \mp 1  & 0 
\\
 0  & \mp 1  & 0  & 0 
\\
 0  & 0  & 0  & \pm 1  
\end{array}\right]$,
all of which are cases of the claimed solutions.
\\
If $g=a \neq 0$ 
(recall we are in sub-subcase $h=b=0$ and $f=d$)
then the SSR anomaly vanishes if and only if $e=c$, whereupon the SSS anomaly also vanishes. Involutivity gives $a^2 + d^2 =1$, $(a+d)c=0$ and $c^2 =-2ad$, so 
either $c\neq 0$ and $d=-a$ or $c=0$. Both immediately yield special cases of the claimed solutions. {This completes the $h=b$ subcase.} 

\medskip 

Next  
consider subcase $b=-h$. 
{If $b=0$ then in fact $h=b$ and this is dealt with above, so we may assume $b\neq 0$.}
If $b\neq 0$ then $g=-a$ and the SSR anomaly vanishes 
provided that 
$a(d -f) + (c-e)b  =0$. 
So far we have 
\[
S \leadsto  
\left[\begin{array}{cccc}
a  & b  & c  & d  
\\
 e  & f  & -a  & -b  
\\
 -b  & -a  & f  & e  
\\
 d  & c  & b  & a  
\end{array}\right]
\]
(and the condition involving $e,f$). 
From involutivity we now get $a(d+f)=0$ and $(b+c)(d+f)=0$ 
(necessary but not yet sufficient). 
\\
{Consider the sub-subcase $a \neq 0$.} 
So if $a\neq0$ then $f=-d$. 
Finally the SSS anomaly gives $(c+e)(b-c)d=0$, so if $d\neq 0$ then either
$e=-c$ or $c=b$. 
In the $c=b$ case we get $a(b+e)^2 =0$ so $e=-b = -c$ anyway. 
In the $e=-c$ case 
 we  get  $(b-c)(b+c)^2=0$ and $(b-c)(1-(b+c)^2)=0$ so either $b=c$ or $b=-c$
in which case $b=c=0$.  
The anomaly then vanishes. That is 
\[
S \leadsto  \RRRR_- =
\left[\begin{array}{cccc}
a  & b  & b  & d  
\\
 -b  & -d  & -a  & -b  
\\
 -b  & -a  & -d  & -b  
\\
 d  & b  & b  & a  
\end{array}\right]
= a\PP_- +d \AAA_-  + b (\NNN_- + \NNN'_-) 
\]
which satisfies the YBE. 
The remaining constraint arises from the sufficient condition for involutivity:
$a=d\pm 1$ and then $a(a\mp 1)= b^2$. 
Observe that these are (\ref{caseantislash}c) and (\ref{caseantislash}c) with $\epsilon=\mp 1$. 
We are done with the sub-sub-subcase $d \neq 0$.
If $d=0$ 
{(we are in case anti-slash; subcase $h=-b$ with $b \neq 0$; sub-subcase $a\neq 0$)}
then $f=0$ and $e=c$. Involutivity gives $c=b$ and then $ab=0$ - a contradiction, so there is no solution here. 
\\
Finally for subcase $h=-b \neq 0$ we have the sub-subcase $a=0$. Here $c=e$ and then the SSR anomaly vanishes (as well as SRR). Indeed from SSS we have $c=0$ and then involutivity yields a contradiction ($f=0=\pm1$), so there is no solution here. 
\\
We are done.  

\medskip \medskip 

\noindent 
(\ref{caseFlip}) (Flip) is  now clear, by symmetry and bookkeeping.

\qed

\medskip  \hspace{.1cm} 

\section{Analysis of 
MD representations in the classification}   \label{ss:analyse}


\subsection{Structure of representations: some preliminaries}
For applications of paravortices to quantum information there are several important questions to be asked about the representations of $\MD_n$ associated with the pair $(R,S)$, such as: \emph{ is the image a finite group? is the image semisimple? when are these representations unitary? how do they decompose as a direct sum of indecomposable/irreducible representations?}  We will provide some partial answers and methods for answering these questions in what follows.

\mdef \label{question: infinite/semisimple?} Since $\MD_n$ is virtually abelian for all $n$, 
the potentially infinite {order}
part 
\ppmm{of the image as a group}
of any representation comes from the image of the normal abelian subgroup $\Z^{\binom{n}{2}}$.  
There is a trichotomy where $\Z^{\binom{n}{2}}$ has
\begin{enumerate}
    \item[(a)] Finite \ppmm{(and hence by Maschke's Theorem necessarily semisimple)} image
    \item[(b)] Infinite, but semisimple (completely reducible) image
    \item[(c)] Infinite, non-semisimple image.
\end{enumerate}

The first two situations occur when the image of $\Z^{\binom{n}{2}}$ consists of diagonalisable matrices.
\ignore{{, and we can employ Theorems \ref{thm:infinite clifford} and \ref{th:allreps}. }}
Since the images of the $\Z^{\binom{n}{2}}$ generators are all similar to the Kronecker product of the matrix $X:=RS$ with identity matrix, we have:
case (a) if $X$ has finite order and (b) if $X$ is diagonalisable but infinite order and (c) otherwise.

\medskip

\mdef \label{ss:imageX}\textbf{The image of $X$.}  Here we describe $X=RS$ in each case of the classification in Theorem \ref{thm: big theorem}.

\noindent 
Case (1) is trivial. 

\noindent 
Case (\ref{caseaglue}): 
Jordan form of $X$ is   
 $\left[ \begin {array}{cc} 1&1\\ \noalign{\medskip}0&1\end {array}\right]\oplus \left[ \begin {array}{cc} 1&0\\ \noalign{\medskip}0&1\end {array}\right]$

\noindent 
Case (\ref{casefglue}): If $p\neq -q$ then $S=\pm R$ and so of course  $X=\pm I$.  
\\  { . } \hspace{1cm} 
If $p=-q$ then 
the Jordan form of $X$ is  
$\left[ \begin {array}{cccc} 1&1&0&0\\ \noalign{\medskip}0&1&1&0
\\ \noalign{\medskip}0&0&1&0\\ \noalign{\medskip}0&0&0&1\end {array}
 \right].$

\noindent 
Case (\ref{case5}): If $p^2 \neq 1$ then  $X$ is diagonal with entries $[1,p/s,s/p,\mp 1]$. 
\\ { . } \hspace{1.0095cm} 
If $p= \pm 1$ then 
$X$ has Jordan form 
$\left[ \begin {array}{cc} 1&1\\ \noalign{\medskip}0&1\end {array}\right]\oplus \left[ \begin {array}{cc} 1&0\\ \noalign{\medskip}0&1\end {array}\right].$

\noindent 
Case (\ref{casefslash}):  If $p^2 \neq 1$ then 
$X$ is diagonal, with eigenvalues $[1,p/s,s/p,\pm 1]$.
\\
{ . } \hspace{1cm}  If $p=-1$ this 
yields 
$ X= \left[ \begin {array}{cccc} 0&0&0&s\\ \noalign{\medskip}0&0&\pm 1&0
\\ \noalign{\medskip}0&\pm1&0&0\\ \noalign{\medskip}\frac{1}{s}&0&0&0
\end {array} \right] $,
having eigenvalues: $[\pm 1,\pm 1]$.

\newcommand{\zz}{{ . } \hspace{1.1cm}}  

\noindent 
Case (\ref{caseantislash}): (a)  $Spec(X)=[\epsilon,\epsilon,\pm2x + \epsilon - 2z]$. 
\\ \zz (b) $Spec(X)=[\pm1,\pm 2y + \epsilon - 2w]$.
\\ \zz (c) $Spec(X)=[\pm 1,\pm 1]$ and $X$ is diagonalisable.

Case (\ref{caseFlip}): {These cases are covered by the above by $R\leftrightarrow S$ symmetry, but note that $X$ is diagonalisable in all case.}
\newcommand{\AAAA}{\mathscr{A}}
\newcommand{\BBBB}{\mathscr{B}}

\mdef   Let $\AAAA$ be an algebra acting faithfully on a module $Y$ 
(one can think of any algebra $\BBBB$ acting on $V$ and then consider $\AAAA$ as the quotient by the annihilator of the action). 
Thus $\AAAA':=End_{\AAAA}(V)$ is the algebra of endomorphisms of $V$ that commute with the action of $\AAAA$. 

\ignore{{
Here we are interested in cases where $V$ is a finite-dimensional vector space over the ground field $\C$, so $\AAAA$ is finite-dimensional, even if $\BBBB$ is not. 
(Indeed for us $\BBBB$ is typically the group algebra of an infinite group.) 
Since $\AAAA$ is an fd $\C$-algebra it has a unique decomposition of 1 into primitive central idempotents (PCIs) - see e.g. \cite[\S9B]{CurtisReiner}. 
Since $Y$ is faithful the image of every PCI is a non-zero central idempotent, 
and induces a direct summand of $Y$. Thus $Y$ decomposes into at least as many summands as 1 does. In particular if there is a central idempotent different from 1 then $Y$ is not indecomposable. 

\ecr{[to not belabour the point, will try to be a little less tied to our examples.]} 

\mdef  \label{de:stan-comm} 
In practice we fix a basis of $Y$, so $End(Y)$ is the corresponding matrix algebra. 
If we are given a finite collection of matrices $A_1,\ldots, A_k$ generating $\AAAA$, finding PCIs can be computationally expensive. Na\"ively one must:
\begin{enumerate}
    \item Solve the (homogenous) linear equations in the entries of 
    general matrix $T$ 
    \ppmm{$\in End(Y)$} 
    given by $TA_i=A_iT$ to obtain a basis for $\AAAA'$,
    \item find a basis for $\AAAA$ itself 
    \item obtain $Z(\AAAA)=\AAAA\cap\AAAA'$ by finding a basis for the intersection,
    \item solve $T^2=T$ in $Z(\AAAA)$ to find central idempotents, 
    \item decompose the idempotents into primitives.
\end{enumerate}
Any of these steps can be troublesome in practice. Step (1) is perhaps the least onerous, especially if the $A_i$ are sparse.   If $\AAAA$ is semisimple, then step (2) may be computed from step (1) using the double commutant theorem.

An alternative practical approach to decomposing $Y$ as an $\AAAA$-module is the following 'divide and conquer' idea.  First find a non-trivial (i.e. neither $0$ nor $I$)  $M\in\AAAA'$.   The generalized eigenspace decomposition of $Y$ with respect to $M$ provides a decomposition of $\AAAA$, since any $M$-invariant decomposition is also $\AAAA$ invariant.   One can then repeat this on the these invariant subspaces.  

The difficulty to step (1) above is that the general case has as many variables as the matrix $T$.  Tuning some of these to $0$ can be an efficient way to find a suitable $M\in\AAAA'$.  Moreover, in our situation the generating matrices are fairly sparse, so methods for finding a single solution are typically more efficient than finding a basis for solutions.  

As we noted above, the action of $\AAAA$ on $Y$ is faithful.  Thus if we can prove $Y$ is indecomposable as a module then $\AAAA$ is indecomposable as an algebra.
}}

Now $V$ is indecomposable if an only if $\AAAA'$ has no non-trivial idempotents. {On the other hand, if $\AAAA$ has a non-trivial decomposition, there is a $T\in\AAAA'$ whose eigenspace decomposition yields the decompostion.} We will use this below.

\medskip


\subsection{The MD representation in the f-glue case}

\medskip

We consider Theorem \ref{thm: big theorem} case \eqref{casefglue} where
\[
[R,S]= \left[\left[ \begin {array}{cccc} 1&q&-q&-{q}^{2}\\ \noalign{\medskip}0&0&1
&q\\ \noalign{\medskip}0&1&0&-q\\ \noalign{\medskip}0&0&0&1
\end {array} \right] ,
 \left[ \begin {array}{cccc} 1&p&-p&-{p}^{2}\\ \noalign{\medskip}0&0&1
&p\\ \noalign{\medskip}0&1&0&-p\\ \noalign{\medskip}0&0&0&1
\end {array} \right] \right].
\]
As for a-glue we can see that this case is Wangian when $q=p$, where it is isomorphic to `flip'. 
In other words,  
the representation theory is as for the classical $sl_2$ spin chain.
And again generically (for $q \neq p$) then $R-S$ lies in the radical.
This time however, unlike the a-glue case, the radical-squared does not vanish. 

\mdef By the HoG Theorem \cite{AlmateariMartinRowell}  
we thus have the irreducible factors in all cases. 
Indeed the Bratelli diagram of the tower of semisimple quotients (a convenient summary of the irreducibles and their dimensions) is the usual {\em (left-)truncated} Pascal triangle.  

\medskip

\mdef 
Again, 
$$
X= \left[ \begin {array}{cccc} 1&p-q&q-p&- \left( q-p \right) ^{2}
\\ \noalign{\medskip}0&1&0&q-p\\ \noalign{\medskip}0&0&1&p-q
\\ \noalign{\medskip}0&0&0&1\end {array} \right]
$$ 
is not diagonalizable for $q\neq p$ 
(cf. \S\ref{ss:imageX}), 
so the image is infinite and non-semisimple. 

\mdef \label{prop: f-glue rep} {\bf{Proposition}}. 
The $\MD_n$ representations coming from the f-glue variety of MD representations are indecomposable, {except when $q=p$}. 

\medskip 

\noindent 
{\em{Proof}}. 
For $n=3$, by direct calculation we find that the centraliser 
$\AAAA'_3$
of $ \AAAA_3 = \langle R_1,S_1,R_2,S_2\rangle$
consists,
irrespective of the values of $q\neq p$, 
of matrices of the form:
\[A=\left[\begin{array}{cccc|cccc}
f  & e  & e  & a  & e  & a  & a  & b  
\\
 0 & f  & 0 & c  & 0 & c  & c -e  & d  
\\
 0 & 0 & f  & c  & 0 & c -e  & c  & d  
\\
 0 & 0 & 0 & f  & 0 & 0 & 0 & e  
\\ \hline 
 0 & 0 & 0 & c -e  & f  & c  & c  & d  
\\
 0 & 0 & 0 & 0 & 0 & f  & 0 & e  
\\
 0 & 0 & 0 & 0 & 0 & 0 & f  & e  
\\
 0 & 0 & 0 & 0 & 0 & 0 & 0 & f  
\end{array}\right]
,\]
which is never 
at any point in the variety
a non-trivial idempotent: this can be see by solving $A^2=A$. Note that the $(1,1)$ entry of $A^2=A$ implies $f^2=f$, and the $(5,4)$ entry implies $(c-e)=2f(c-e)$, so that $(c-e)=0$ hence $A$ is either full rank or nilpotent idempotents, i.e., trivial).

The general $n\geq 3$ case is as follows. 
Write $\AAAA_n=\langle R_1,S_1,\ldots,R_{n-1},S_{n-1}\rangle \subset  \Mat(2^n,2^n)$ 
for our image algebra. 
We will prove below that any $A\in\Mat(2^n,2^n)$ with $A\in \AAAA'_n$ 
has the following $2\times 2$ block structure:
\(A=\mat D & B\\ C&D' \tam\) where 
\begin{enumerate}
    \item $D$ and $D'$ are upper triangular with the same constant diagonal, and
    \item the only possibly non-zero entry in $C$ is the upper right entry.
\end{enumerate} 
To see that there are no non-trivial idempotents of this form first observe that $A$  has constant diagonal, and the only potentially non-zero entry below the diagonal is in the $(2^{n-1}+1, 2^{n-1})$ position.  Let us define $\alpha$ to be the diagonal entry of $A$ and $\gamma$ the entry in the $(2^{n-1}+1, 2^{n-1})$ position. Now if $A$ is an idempotent $A^2=A$, the $(1,1)$ entry yields $\alpha^2=\alpha$ and the $(2^{n-1}+1, 2^{n-1})$ entry yields $2\alpha\gamma=\gamma$.  Combining, we find $\gamma=0$.  But then $A=\alpha\cdot Id+N$ where $N$ is nilpotent.  So if $\alpha=0$, $A$ is nilpotent and an idempotent, hence $A=0$.  If $\alpha=1$, $A$ has full rank, so $A=Id$.

\smallskip
To prove  \ppmm{every} $A$ has the above form, we proceed by induction on the rank $n$.  
The base case $n=3$ is as above, and indeed has the form described. Suppose the statement holds for all ranks $3\leq k\leq n-1$, and consider rank $n$.  Since $A$ commutes with $\AAAA_n$ it commutes with the subalgebra $I\otimes \AAAA_{n-1}$. By the induction hypothesis 
this gives us the following $4\times 4$ block structure with $m=2^{n-2}$: 
\[A=\begin{bmatrix}
x\,I_m + N_1 & U_1            & y\,I_m + N_2 & U_2 \\
C_1          & x\,I_m + N_3   & C_2          & y\,I_m + N_4 \\
z\,I_m + N_5 & U_3            & w\,I_m + N_6 & U_4 \\
C_3          & z\,I_m + N_7   & C_4          & w\,I_m + N_8
\end{bmatrix}\] where: $x,y,z,w\in\C$, the $N_i$ are strictly upper triangular, the $C_i$ are $0$ except possibly for the  $(1,m)$ entry, and the $U_i$ are arbitrary.  The block structure of $R_1$ is 
\[R_1=\begin{bmatrix}
I_m & q\,I_m & -q\,I_m & -q^{2} I_m \\
0   & 0      & I_m     & q\,I_m      \\
0   & I_m    & 0       & -q\,I_m     \\
0   & 0      & 0       & I_m
\end{bmatrix}\] and similarly for $S_1$, with $q$ replaced by $p$.  Next we use $[R_1,A]=[S_1,A]=0$ to deduce further structure.  Explicitly, we have, for $R_1$ (entries in \textbf{bold} are used):

\begin{enumerate}
\item[\textbf{(1,1)}] \(q\,C_1 - q\,(z I_m+N_5) - q^{2} C_3 = 0.\)
\item[(1,2)] \(q\,(x I_m+N_1) + (y I_m+N_2) - U_1 - q\,(x I_m+N_3) + q\,U_3 + q^{2}(z I_m+N_7) = 0.\)
\item[(1,3)] \(-\,q\,(x I_m+N_1) + U_1 - (y I_m+N_2) - q\,C_2 + q\,(w I_m+N_6) + q^{2} C_4 = 0.\)
\item[(1,4)] \(-\,q^{2}(x I_m+N_1) + q\,U_1 - q\,(y I_m+N_2) - q\,(y I_m+N_4) + q\,U_4 + q^{2}(w I_m+N_8) = 0.\)

\item[(2,1)] \(C_1 - (z I_m+N_5) - q\,C_3 = 0.\)
\item[\textbf{(2,2)}] \(q\,C_1 + C_2 - U_3 - q\,(z I_m+N_7) = 0.\)
\item[(2,3)] \(-\,q\,C_1 + (x I_m+N_3) - (w I_m+N_6) - q\,C_4 = 0.\)
\item[(2,4)] \(-\,q^{2} C_1 + q\,(x I_m+N_3) - q\,C_2 + (y I_m+N_4) - U_4 - q\,(w I_m+N_8) = 0.\)

\item[(3,1)] \(z I_m+N_5 - C_1 + q\,C_3 = 0.\)
\item[\textbf{(3,2)}] \(q\,(z I_m+N_5) + (w I_m+N_6) - (x I_m+N_3) + q\,(z I_m+N_7) = 0.\)
\item[(3,3)] \(-\,q\,(z I_m+N_5) + U_3 - C_2 + q\,C_4 = 0.\)
\item[(3,4)] \(-\,q^{2}(z I_m+N_5) + q\,U_3 - q\,(w I_m+N_6) + U_4 - (y I_m+N_4) + q\,(w I_m+N_8) = 0.\)

\item[(4,1)] \(C_3 - C_3 = 0.\)  
\item[(4,2)] \(q\,C_3 + C_4 - (z I_m+N_7) = 0.\)
\item[\textbf{(4,3)}] \(-\,q\,C_3 + (z I_m+N_7) - C_4 = 0.\)
\item[(4,4)] \(-\,q^{2} C_3 + q\,(z I_m+N_7) - q\,C_4 = 0.\)
\end{enumerate}

and similarly for $S_1$, replacing $q$ by $p$.

\medskip

The $(1,1)$ entries give us: \[
    -\,q\,C_1 \;+\; q\,(zI_m+N_5) \;+\; q^2\,C_3 \;=\; 0, \qquad -\,p\,C_1 \;+\; p\,(zI_m+N_5) \;+\; p^2\,C_3 \;=\; 0.
\]
Recall that $q\neq p$ and neither is $0$. This implies that $z=0$ and the only possibly non-zero entry of $N_5$ is the upper right entry, say $n_5$.  Denote the corresponding entries of $C_1$ and $C_3$ by $c_1$ and $c_3$. Since $q\neq p$ we conclude $c_3=0$ so $\boxed{C_3=0}$ and $c_1=n_5$ i.e., $\boxed{N_5=C_1}$.

Using the above on the $(2,2)$ entry we get:
\[
q\,C_1 + C_2 - U_3 - q\,N_7 = 0,\qquad
p\,C_1 + C_2 - U_3 - p\,N_7 = 0.
\] Subtracting we conclude $\boxed{C_1=N_7}$ and hence $\boxed{U_3=C_2}$.    The $(3,2)$ entry yields (since $z=0$):
\[
qN_5 \;+\; (wI_m+N_6) \;-\; (xI_m+N_3) \;+\; qN_7 \;=\; 0.
\]
Using $N_5=C_1$, this becomes
\[
(wI_m+N_6) \;-\; (xI_m+N_1) \;+\; 2q\,C_1 \;=\; 0,
\] and similarly with $q\mapsto p$. Subtract:
\[
2(q-p)\,C_1=0 \quad\Rightarrow\quad \boxed{C_1=N_5=N_7=0}\quad(\text{since }q\neq p).
\]
Plug back in:
\[
(w-x)I_m+(N_6-N_1)=0.
\]
Here $N_1,N_6$ are strictly upper-triangular, whence:
\(
\boxed{w=x}\) and \(\boxed{N_6=N_1}.
\)
 The (4,3) entry \[-\,q\,C_3 + (z I_m+N_7) - C_4 = 0\] yields $\boxed{C_4=0}$ and so we conclude:

\[A=\begin{bmatrix}
x\,I_m + N_1 & U_1            & y\,I_m + N_2 & U_2 \\
0         & x\,I_m + N_3   & C_2          & y\,I_m + N_4 \\
0 & C_2           &x\,I_m + N_6 & U_4 \\
0          & 0   & 0         & x\,I_m + N_8
\end{bmatrix}\]

 In particular $A$ has the required form and the induction is complete. \qed

\medskip
\subsection{The MD representation in the a-glue case}



Here we 
consider the local representations $\rho_n$ of $\MD_n$
as in Theorem \ref{thm: big theorem} case \eqref{caseaglue} associated with
\[
S= \left[ \begin {array}{cccc} 1&0&0&p\\ \noalign{\medskip}0&0&1&0
\\ \noalign{\medskip}0&1&0&0\\ \noalign{\medskip}0&0&0&-1\end {array}
 \right],   \;\;
 R= \left[ \begin {array}{cccc} 1&0&0&q\\ \noalign{\medskip}0&0&1&0
\\ \noalign{\medskip}0&1&0&0\\ \noalign{\medskip}0&0&0&-1\end {array}
 \right]. 
 \] 
As usual it will be convenient to linearise - to consider $\rho_n$ as a representation of  the group algebra, and the algebra image, rather than just the group.  

The clue for the analysis is in the case name. 
This case is glue in the classification sense, but at $n=2$ it 
clearly also (generically) has glue in the Jacobson radical sense. 
At the points $p=q$ 
(a closed and hence non-generic condition)
then it is Wangian, as in Prop.\ref{pr:Wangian}, and hence of course semisimple (it is also a representation of the $\Sym_2$ quotient; and continuous deformation through a path of such reps cannot change the structure, so we can determine the structure, on the nose, from the $p=q=0$ case), with a structure that we will recall for all $n$ shortly. 
When $p \neq q$ observe that $S-R$ is in the radical, and the quotient by the radical is Wangian - indeed having the same structure as the Wangian case. 

Altogether for $n=2$ we have, in Loewy (socle series) notation: 
\[
\rho_2 \cong L_{2,0} \oplus L_{1^2} \oplus { L_{2,0} \atop L_{1^2}}
\]
-where we use the $\Sym_2$ partition labels for the irreducibles. 

\mdef  \label{lem:pascalc}
So, back to the Wangian case:
For general $n$ this is, 
up to isomorphism as above, 
the representation we get from Sutherland's $gl_q(1|1)$ spin chain 
\cite{Sutherland75} 
in the classical case (in the terminology of \cite{MartinRittenberg92}). That is, it is a sum  of all the two-row $(+-)$-signed Young modules. 
Thus the structure is a direct sum of two copies of each of the $\Sym_n$  irreps with a hook Young diagram label (see e.g. \cite{MartinRittenberg92,DeguchiMartin92}).  For example 
\beq  \label{eq:rho4-a} 
\rho_4 \cong  (L_{4,0} \oplus L_{3,1} \oplus L_{2,1^2} \oplus L_{1^3} )^{\oplus 2}
  = ( 1\oplus 3 \oplus 3' \oplus 1'  )^{\oplus 2}
\eq 
-where in the latter formulation we give the dimensions. 
Indeed the  dimensions are given by the appropriate row $n$ of Pascal's triangle. 

\mdef  {\bf{Proposition}}.    
The Pascal  
combinatoric as in (\ref{lem:pascalc}) 
also gives the structure of the semisimple quotient in the generic parameter case. 
 \medskip \\ 
{\em Proof}.  This follows immediately from  the higher-on-glue theorem \cite{AlmateariMartinRowell}.  
\qed 

\medskip 

So the next question is how the simple factors are glued together.

\mdef  
 Observe here, from \S\ref{ss:imageX}, that 
\[
RS = X = \left[ \begin {array}{cccc} 
1&0&0&p-q\\ \noalign{\medskip}
0&1&0&0
\\ \noalign{\medskip}
0&0&1&0\\ \noalign{\medskip}
0&0&0&1\end {array}
 \right]
\] 
which is not diagonalisable and has infinite order if $p\neq q$. 
Thus for all $n$ 
if $p\neq q$ the image of $MD_n$ is infinite, and non-semisimple since $X\otimes I^{\otimes (n-2)}$ is also not diagonalisable.

The $MD_3$ representation decomposes as a direct sum of $2$ 4-dimensional indecomposable representations, 
let us call them $\mu$ and $\nu$ respectively, 
with images for $s_1,r_1,s_2,r_2$:
  \[  \left[ \begin {array}{cccc} 1&0&0&p\\ \noalign{\medskip}0&0&1&0
\\ \noalign{\medskip}0&1&0&0\\ \noalign{\medskip}0&0&0&-1\end {array}
 \right], 
 \left[ \begin {array}{cccc} 1&0&0&q\\ \noalign{\medskip}0&0&1&0
\\ \noalign{\medskip}0&1&0&0\\ \noalign{\medskip}0&0&0&-1\end {array}
 \right], 
 \left[ \begin {array}{cccc} 1&p&0&0\\ \noalign{\medskip}0&-1&0&0
\\ \noalign{\medskip}0&0&0&1\\ \noalign{\medskip}0&0&1&0\end {array}
 \right], 
 \left[ \begin {array}{cccc} 1&q&0&0\\ \noalign{\medskip}0&-1&0&0
\\ \noalign{\medskip}0&0&0&1\\ \noalign{\medskip}0&0&1&0\end {array}
 \right] \]
and 
\[ \left[ \begin {array}{cccc} 1&0&0&p\\ \noalign{\medskip}0&0&1&0
\\ \noalign{\medskip}0&1&0&0\\ \noalign{\medskip}0&0&0&-1\end {array}
 \right], 
 \left[ \begin {array}{cccc} 1&0&0&q\\ \noalign{\medskip}0&0&1&0
\\ \noalign{\medskip}0&1&0&0\\ \noalign{\medskip}0&0&0&-1\end {array}
 \right] ,
 \left[ \begin {array}{cccc} 0&1&0&0\\ \noalign{\medskip}1&0&0&0
\\ \noalign{\medskip}0&0&1&p\\ \noalign{\medskip}0&0&0&-1\end {array}
 \right] ,
 \left[ \begin {array}{cccc} 0&1&0&0\\ \noalign{\medskip}1&0&0&0
\\ \noalign{\medskip}0&0&1&q\\ \noalign{\medskip}0&0&0&-1\end {array}
 \right] 
\]
\medskip 

\newcommand{\nuu}{\mu'}

It is immediately clear (from the first column) that the first rep, $\mu$, contains the trivial rep as a factor. 
Observe that if we quotient by this then we have a rep $\nuu$ with $\nuu(r_1)=\nuu(s_1)$ 
and $\nuu(r_2) = \nuu(s_2)$ - i.e. it is Wangian, in the sense of \ref{de:Wangian}. 
This is therefore also a  rep of $\Sym_3$ and has the same structure for either group. It is the rep $(2,1) \oplus (1^3)$ for $\Sym_3$. 
So altogether we have, as a $\MD_3$-module, the non-split structure 
\[
0 \rightarrow (3) \rightarrow \mu \rightarrow (2,1)\oplus (1^3) \rightarrow 0
\]
(or reverse the ses direction, depending on taste for left or right resolution) 
where the 4th term, and indeed the second term, is understood as Wangian, 
hence using the $\Sym_n$ label, but the whole module is of course not Wangian. 

\medskip 

The above analysis is immediate by sight. But not every such problem yields so easily. 
In (\ref{exa:a-n3-magma}) we address the same analysis using computer algebra, and learn a bit more about this indecomposable - in particular that the trivial is glued over {\em both} of the other factors. 

\medskip 

For the second indecomposable, it is immediately clear that the alternating Wangian rep is a 1d factor. And again the quotient is Wangian. 
It is the defining rep, as a $\Sym_3$ rep, with content $(2,1)\oplus (3)$.  Thus here we have 
the non-split structure 
\[
0 \rightarrow (1^3) \rightarrow \nu \rightarrow (2,1)\oplus (3) \rightarrow 0
\]
- to be understood in the same way as above. 

\mdef  \label{exa:a-n3-magma}
Consider  the summand  of $\rho_3$  
given by the rep $\mu$ (in the first row above). 
(\ppmm{By a Magma calculation})
The corresponding subalgebra of $\Mat(4,4)$ has dimension $9$, with Jacobson radical of dimension $3$.  The ss quotient has centre of dimension $3$, so is a sum of $3$ simples, of dimension $2,1,1$ as modules, as we already saw.   

In case $n=4$ then $\rho_4$ is a direct sum of two 8d indecomposables. 
We already know the total semisimple part by \eqref{eq:rho4-a}, 
and it turns out that each 8d rep has semisimple part of the form 
$1\oplus 3 \oplus 3' \oplus 1'$, thus with algebra dimension 20. 
(\ppmm{From Magma}) The full subalgebra has dimension 35, so the radical has dimension 15.  

With the insights gained above, we have the following:

\medskip
\mdef \label{prop:a-glue decomp} \textbf{Proposition}
For $n\geq 3$ the representation $\rho_n$ decomposes as the direct sum of two indecomposable representations of dimension $2^{n-1}$.
\medskip

\noindent 
{\em{Proof}}.  Define $D_1=\mat a&0\\0&b\tam$. For any matrix $M$ with entries in the function field $\C(a,b)$ define $M'$ to be the image of $M$ under the automorphism defined by $a\leftrightarrow b$. Thus, for example, $D_1'=\mat b&0\\0&a\tam$.  For $n\geq 2$ define $D_n=\mat D_{n-1}&0\\0&D_{n-1}'\tam$.
For $n\geq 3$ we can use induction to prove that:\begin{enumerate}
    \item if $T\in\AAAA_n'$ then there exists $N_1$ and $N_2$ strictly upper triangular and $B\in \AAAA'_{n-1}$ such that  \[T=\mat D_{n-1}+N_1 & B\\ 0& D'_{n-1}+N_2 \tam.\]
    \item \(\mat D_{n-1} & 0\\ 0&D'_{n-1} \tam\in \AAAA'_n.\)
\end{enumerate}
Direct calculation shows that any $T\in\AAAA'_3$ has the form: \[ \left[ \begin{array}{cccc|cccc} a&c&c&0&c&0&0&0\\ 0
&b&0&-d&0&-d&0&0\\ 0&0&b&d&0&0&-d&0
\\ 0&0&0&a&0&0&0&c\\ 
\hline0&0&0&0&b&d&d
&0\\ 0&0&0&0&0&a&0&-c\\ 0&0&0&0&0&0
&a&c\\ 0&0&0&0&0&0&0&b\end{array} \right] 
,\] so (both) statements hold for $n=3$.  
The induction proceeds as in the proof of Proposition \ref{prop: f-glue rep}.  For (1): if $T$ commutes with $I\otimes \AAAA_{n-1}\subset\AAAA_n$ then $T$ has a $2\times 2$ block structure with each block of the given form, by hypothesis.  Then commutation with $R_1$ and $S_1$ yields the desired form.  

For (2): the given diagonal matrix commutes with $I\otimes \AAAA_{n-1}$ by hypothesis, and by $R_1,S_1$ by direct verification of the block $4\times 4$ matrices.

From (1) we conclude that the representation $\rho_n$ decomposes into \emph{at most} two indecomposable summands, since any matrix in $\AAAA'_n$ has at most 2 distinct eigenvalues. From (2) we find it decomposes into \emph{at least} 2 indecomposable summands, each of dimension $2^{n-1}$ by observing that the eigenvalues $a,b$ each appear with multiplicity $2^{n-1}$. 
\qed
\medskip

\subsection{The MD representation in the antislash case}

The following semisimple case   
provides an example of unitary representations with infinite image.
Consider \[[R,S]=\left[\left[ \begin {array}{cccc} 0&0&0&1\\ \noalign{\medskip}0&1&0&0
\\ \noalign{\medskip}0&0&1&0\\ \noalign{\medskip}1&0&0&0
\end {array} \right],\left[ \begin {array}{cccc} z&-x&x&-z+1\\ \noalign{\medskip}x&-z+1&z&
-x\\ \noalign{\medskip}-x&z&-z+1&x\\ \noalign{\medskip}-z+1&x&-x&z
\end {array} \right]\right]\]  the anti-slash solution \eqref{caseantislash} of Theorem \ref{thm: big theorem} subcase (a) with $\varepsilon=-1$,
where $x^2 - z^2 + z=0$.  As above, we consider the corresponding representations of $\MD_n$.

\mdef As $X=RS$ is diagonalisable for all $x,z$ (see \S\ref{ss:imageX}), the representation is semisimple at every point.  For $S$ (and hence the representation of $\MD_n$) to be unitary we must have $S^{\dag}=S$, since $S$ is involutive.  This implies that $x=r\cdot \ii$ for some real number $r$, and that $z$ must be real.  Thus, since $z^2-z+r^2=0$, we further must constrain $-\frac{1}{2}\leq r\leq \frac{1}{2}$.  This gives spectrum of $X$  $[1,1,{ {e}\sqrt {1-4\,{r}^{2}}}\pm 2r\cdot \ii]$ where $e=\pm 1$ is the sign coming from the choice of $z$ for a fixed $r$.  Clearly these can be of  infinite order, but are finite for a countable dense subset of choices of $r$.

\mdef Again, let $\AAAA_n$ denote the algebra generated by the image of the corresponding representation of $\MD_n$. A full description of $\AAAA'_n$ is not straightforward to compute due to the form of $S$. We will consider the corresponding representation of $\MD_3$.  By direct computation we find that any $T\in\AAAA'_3$ has the form \[ \left[ \begin {array}{cccccccc} a&b&b&a+d-f&b&a+d-f&a+d-f&b+c-e
\\ \noalign{\medskip}e&f&d&e&d&e&c&d\\ \noalign{\medskip}e&d&f&e&d&c&e
&d\\ \noalign{\medskip}a+d-f&b&b&a&b+c-e&a+d-f&a+d-f&b
\\ \noalign{\medskip}e&d&d&c&f&e&e&d\\ \noalign{\medskip}a+d-f&b&b+c-e
&a+d-f&b&a&a+d-f&b\\ \noalign{\medskip}a+d-f&b+c-e&b&a+d-f&b&a+d-f&a&b
\\ \noalign{\medskip}c&d&d&e&d&e&e&f\end {array} \right] 
.\] Such matrices have at most 4 distinct eigenvalues, which implies that any decomposition has at most 4 irreducible factors.
An explicit example with exactly 4 eigenvalues is: \[Y= \left[ \begin {array}{cccccccc} 1&0&0&1&0&1&1&-2\\ \noalign{\medskip}
1&1&1&1&1&1&-1&1\\ \noalign{\medskip}1&1&1&1&1&-1&1&1
\\ \noalign{\medskip}1&0&0&1&-2&1&1&0\\ \noalign{\medskip}1&1&1&-1&1&1
&1&1\\ \noalign{\medskip}1&0&-2&1&0&1&1&0\\ \noalign{\medskip}1&-2&0&1
&0&1&1&0\\ \noalign{\medskip}-1&1&1&1&1&1&1&1\end {array} \right].
\] The eigenspace decomposition of $Y$ yields a decomposition of the representation of $\MD_3$ into $4$ \emph{irreducible} representations of dimensions $1$ (twice) and $3$ (twice).  The $1$-dimensional representations are trivial, and the $3$ dimensional representations are isomorphic copies of the same irreducible representation
with $X_{ij}$ eigenvalues $1,\lambda=-2z + 1 + 2x,$ and $\lambda^{-1}=-2z + 1 - 2x.$

  \mdef Note that the eigenvalues of $X$ are of the form $[1,\lambda,1/\lambda,1]$, which suggests there is a relationship with the  solution in Theorem \ref{thm: big theorem} case \eqref{caseFlip}: \[[R',S']=\left[ \left[ \begin {array}{cccc} 1&0&0&0
\\ \noalign{\medskip}0&0&1&0\\ \noalign{\medskip}0&1&0&0
\\ \noalign{\medskip}0&0&0&1\end {array} \right],\left[ \begin {array}{cccc} 1&0&0&0\\ \noalign{\medskip}0&0&\lambda&0
\\ \noalign{\medskip}0&\frac{1}{\lambda}&0&0\\ \noalign{\medskip}0&0&0&1
\end {array} \right]   \right].\]
Indeed, we find that these two representations are equivalent, with $\lambda$ as above.  It is clear that they are not locally equivalent: any matrix of the form $A\otimes A$ commutes with $R'$, so no local gauge choice can move $R'$.  Similarly, they are not DS-equivalent: there are no invertible matrices $T$ such that $I\otimes T$ conjugates the pair $(R',S')$ to $(R,S)$. On the other hand a combination of DS and local equivalences does yield an equivalence.  We find that $T\otimes T$ where $T= \left[ \begin {array}{cc} 0&1\\ \noalign{\medskip}1&0\end {array}
 \right]$ commutes with both $R$ and $S$.  Now take $A=\left[ \begin {array}{cc} 1&1\\ \noalign{\medskip}-1&1\end {array}
 \right]$ and define $M=(A\otimes A)(I\otimes T)= \left[ \begin {array}{cccc} 1&1&1&1\\ \noalign{\medskip}1&-1&1&-1
\\ \noalign{\medskip}-1&-1&1&1\\ \noalign{\medskip}-1&1&1&-1
\end {array} \right].$  Then $MRM^{-1}=R'$ and $MSM^{-1}=S'$, which is a composition of $\infty$-equivalences, the result follows.

\mdef A similar approach show that case (b) with $\epsilon=-1$  $S= \left[ \begin {array}{cccc} r&-y-1&y&-r\\ \noalign{\medskip}y&-r&r&-y
-1\\ \noalign{\medskip}-y-1&r&-r&y\\ \noalign{\medskip}-r&y&-y-1&r
\end {array} \right] $ is  $\infty$-equivalent to case \eqref{casefslash}
\[ [R',S']=\left[\left[ \begin {array}{cccc} 1&0&0&0\\ \noalign{\medskip}0&0&p&0
\\ \noalign{\medskip}0&1/p&0&0\\ \noalign{\medskip}0&0&0&1
\end {array} \right], 
 \left[ \begin {array}{cccc} 1&0&0&0\\ \noalign{\medskip}0&0& \left( -
2\,y-1-2\,r \right) p&0\\ \noalign{\medskip}0&{\frac {1}{ \left( -2\,y
-1-2\,r \right) p}}&0&0\\ \noalign{\medskip}0&0&0&-1\end {array}
 \right] \right].\]  In this case the same $A$ as above works, with $T= \left[ \begin {array}{cc} p-1&p+1\\ \noalign{\medskip}p+1&p-1
\end {array} \right].
$

\ignore{{

Set $W=\left[ \begin {array}{cc} 1&1\\ \noalign{\medskip}1&-1\end {array}
 \right]\otimes\left[ \begin {array}{cc} 1&-1\\ \noalign{\medskip}1&1
\end {array} \right] 
$ and take $z = -\frac{(\lambda^2 - 2\lambda + 1)}{4\lambda}, x = \frac{(\lambda^2 - 1)}{4\lambda}.$ One easily checks that these satisfy $z^2-z+r^2=0$.  Then we find that $WR'W^{-1}=R$ and $WS'W^{-1}=S$ so that the pairs $(R,S)$ and $R',S')$ are $2$-equivalent.  Decomposing the $\MD_3$ representations yields $3$-equivalence. Yet we have not ruled out $\infty$-equivalence.

On the other hand, one can show that corresponding functors are neither 1) locally equivalent or 2) DS-equivalent.  Indeed, the only invertible $2\times 2$ matrices $A$ such that $A\otimes A$ commutes with both $R$ and $S$ are of the form: $A=\left[ \begin {array}{cc} x&y\\ \noalign{\medskip}y&x\end {array}
 \right].$  But there are no such $A$ that map $(R,S)$ to $(R',S')$.  }}


\section{Discussion} 
Although we have dealt 
\ppmm{exclusively}  
with the rank $2$ setting of $4\times 4$ matrices $(R,S)$ yielding local $\MD_n$ representations, we have seen that these representations have some intriguing features.  From the paraparticles-to-paravortices perspective, having infinite image unitary representations is already a stark difference.  On the other hand, the semi-direct product structure of $\MD'_n\cong\MD_n$ precludes universality in the sense of dense images.  

There are at least two generalisations that our work suggests: firstly, one could move up in dimension--looking at the rank $3$ situation of pairs of involutive $9\times 9$ matrices satisfying the $\LB_n$ relations.  While no classification of Yang-Baxter operators in rank $3$ exists, the involutive solutions 
\ppmm{needed for our approach}
might be accessible.  Secondly, one might consider local $\LB$ representations for which $R^3=I$.  We note that the quotient of the braid groups $B_n$ by the relation $(\sigma_i)^3=1$ is only finite for $n\leq 5$ \cite{Coxeterbanff} in counter distinction with the $(\sigma_i)^2=1$ case, thus we anticipate this to be more complicated.

\ignore{{
\section{Towards Rank 3} 

...

\subsection{Comparison with the complete CC classification for loop braid}

...

\subsection{Towards classification of involutive braid reps in rank-3}

...

...
}}

\vspace{1cm}

\ignore{{
\appendix

\section{Linear representation theory}   \label{ss:linRT}

\newcommand{\A}{\mathfrak{V}}


This paper is about MD representations, which are higher representations - functors. 
But as soon as one has an MD rep one has an ordinary representation for each $\MD_n$, so 
understanding an MD rep is closely tied to understanding the contained ordinary reps;
which is in turn tied to understanding their irreducible and indecomposable factors.
Thus we are motivated to review the ordinary linear representation theory.

There are a few useful approaches to the ordinary linear rep theory of $\MD_n\cong\MDD_n$. But from the $\MDD_n$ perspective the key tool is Clifford theory, so we look at this next.

\subsection{Some general Clifford theory}

\newcommand{\widehate}[1]{#1^{*}}

For notational convenience we will denote the normal abelian subgroup of $\MDD_n$ by $\A_n\cong\Z^{{n}\choose{2}}$, and the (dual) group of characters 
$\A^*_n$.  
Indeed for any abelian group $A$ we  
 denote the `full' dual by 
\beq 
{A^*} \; =\;  \hom_{Grp}(A,\C^{\times})
\eq 
- a group by pointwise multiplication. 
And 
for any locally compact abelian group $A$ we write the {\em unitary} dual 
\beq 
\widehat{A} \; =\;  \hom_{TopGrp}(A,S^1)
\eq  
where $S^1$ (or $U(1)$) is the circle subgroup of $\C^\times$, with the obvious topology - noting that $ \widehat{A} $ is a group by pointwise multiplication; and a topological group by the compact open topology. 

Let $\rho$ be an $A$-rep - not necessarily irreducible. Then we have the trace map to $\C$. This is a character - but not a simple character unless $\rho$ is simple. 
Thus the elements of the dual group are actually {\em simple} characters. 

Now continuing with $A$ an abelian group, let $H$ be a group acting on $A$ by automorphisms as in (\ref{eq:defsd0}), and $A \rtimes H$ the semidirect product. 
Evidently $H$ acts on $A^*$ by $\chi \stackrel{h \in H}{\mapsto} \chi^h$ where 
$\chi^h(a) = \chi(h^{-1} a h)$. 
For $\chi \in A^*$ we thus define the {\em stabiliser} $H$-subgroup 
$H_\chi = \{ h \in H \; | \; \chi^h = \chi \}$.

\ignore{{
\begin{example}
    One instructive case is for 2 dimensional irreducible representations of $\MD_3$.  Suppose that $\rho$ is such a representation.  Then the restriction to a subgroup isomorphic to $\Sigma_3$, say generated by $s_1,s_2$, has one of 4 forms:
    \begin{enumerate}
        \item two copies of the trivial representation,
        \item two copies of the sign representation,
        \item  one copy of each of the trivial and sign representations or
        \item one copy of the irreducible $2$-dimensional $\Sigma_3$ representation
    \end{enumerate} In any of the first three cases we have $\rho(s_1)=\rho(s_2)$, which implies $\rho(r_1)=\rho(r_2)$.  In the first two cases we have $\rho(s_i)$ commutes with $\rho(r_i)$, yielding a reducible representation. 
    
    In the third case we may assume $\rho(s_1)=\rho(s_2)=\left[ \begin {array}
{cc} 1&0\\ \noalign{\medskip}0&-1\end {array} \right]$ and $\rho(r_1)=\rho(r_2)=B$ with $B$ involutive with $AB\neq BA$.  Up to equivalence, we find that such a $B= \left[ \begin {array}{cc} z&1\\ \noalign{\medskip}-{z}^{2}+1&-z
\end {array} \right] $, 
where $z^2\neq 1$ to ensure irreducibility.  

In the last case we assume, without loss of generality, that \[(\rho(s_1),\rho(s_2))=\left(\left[ \begin {array}
{cc} 1&0\\ \noalign{\medskip}0&-1\end {array} \right] , \left[ 
\begin {array}{cc} -{\frac{1}{2}}&{\frac {\sqrt {3}}{2}}
\\ \noalign{\medskip}{\frac {\sqrt {3}}{2}}&{\frac{1}{2}}\end {array}
 \right]
\right),\] with breaks the symmetry afforded by the automorphisms described above.  In this case one may solve explicitly to obtain 

\[(\rho(r_1),\rho(r_2))=\left(\left[ \begin {array}{cc} {\frac {x\sqrt {3}}{2}}+{\frac {y}{2}}&-{
\frac {x}{2}}+{\frac {y\sqrt {3}}{2}}\\ \noalign{\medskip}-{\frac {x}{
2}}+{\frac {y\sqrt {3}}{2}}&-{\frac {x\sqrt {3}}{2}}-{\frac {y}{2}}
\end {array} \right] , \left[ \begin {array}{cc} -y&x
\\ \noalign{\medskip}x&y\end {array} \right]\right)\]
where $x^2+y^2=1$.  
\end{example}
}}


The first key is Clifford's Theorem \cite{Clifford37}, or rather the following generalisation to infinite groups:

\begin{theorem}\label{thm:infinite clifford}
    Let $H$ be a (not necessarily finite) group acting on an abelian group $A$, and $G:=A\rtimes H$ the corresponding semidirect product.  Suppose $(\pi,V)$ is an irreducible representation of $G$.
    Then 
    \begin{enumerate}
        \item[(I)] The restriction of $\pi$ to $A$ decomposes as a direct sum of irreducible characters of $A$.  
        \item[(I')] The irreducible characters in $\pi|_A$ all belong to the same $H$-orbit. 
    \item[(II)] Fix an irreducible  $A$-character $\chi$ appearing in $\pi|_A$. Let $H_\chi$ be its stabiliser as above. 
    Then 
    $\pi|_A\cong \bigoplus_{t\in T}(\chi^{t})^{\oplus m}$ 
    where $T$ is a transversal for 
    the set {$H/H_\chi$} \ppmm{of cosets}; 
    and $m$ is some integer  independent of $t$.
    \item[(III)] Let $V_\chi=\chi^{\oplus m}$ be the ($m$-dimensional) $\chi$-isotypic component of $\pi|_A$.  Then $V_\chi$ lifts to an irreducible $H_\chi$ representation, or indeed for any $\chi^t$ in the orbit of $\chi$.

    \end{enumerate}
\end{theorem}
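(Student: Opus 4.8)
The plan is to run the classical Clifford argument, taking care at exactly the two points where the infinitude of $A$ (and of $H$) could bite; throughout I take $V$ finite-dimensional over $\C$, as in all our applications. First I would establish (I) and (I'). Since $A$ is abelian, $\pi(A)$ is a commuting family of operators on the finite-dimensional complex space $V$, so it has a simultaneous eigenvector spanning a one-dimensional $A$-submodule $W=\C v$ affording some character $\chi\in A^*$; this is where algebraic closedness and finite-dimensionality enter, replacing the appeal to semisimplicity of $\C A$ available in the finite case. Now form $\sum_{g\in G}gW$: it is a nonzero $G$-submodule, hence equals $V$ by irreducibility, and each $gW$ is again a one-dimensional $A$-submodule affording a conjugate character $\chi^{g}$ (which depends only on the image of $g$ in $H$, since $A$ is abelian). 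Thus $V$ is a sum of one-dimensional $A$-submodules, so $\pi|_A$ is semisimple, giving (I); and every constituent character has the form $\chi^{h}$ with $h\in H$, which is (I').

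For (II) I would decompose $\pi|_A=\bigoplus_\eta V_\eta$ into isotypic (eigen-) components. The group $G$ permutes these by $g\cdot V_\eta=V_{\eta^{g}}$, an action factoring through $H$ because $A$ acts on each $V_\eta$ by scalars. By (I') the nonzero components are exactly the $V_{\chi^{t}}$ as $t$ ranges over a transversal $T$ for $H/H_\chi$ (with $\chi^{h}=\chi^{h'}$ precisely when $h,h'$ share an $H_\chi$-coset). Choosing for each $t$ a $g_t\in G$ mapping to $t$, the operator $\pi(g_t)$ restricts to a linear isomorphism of $V_\chi$ onto $V_{\chi^{t}}$, so all these components share a common dimension $m$, yielding $\pi|_A\cong\bigoplus_{t\in T}(\chi^{t})^{\oplus m}$.

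Finally, for (III), set $G_\chi:=A\rtimes H_\chi\leq G$. Since $A$ acts on $V_\chi$ by the scalar character $\chi$ and $H_\chi$ fixes $\chi$, the subspace $V_\chi$ is a $G_\chi$-submodule, and $H_\chi$-invariance of $\chi$ makes $\tilde\chi(a,h):=\chi(a)$ a genuine character of $G_\chi$; untwisting $V_\chi$ by $\tilde\chi^{-1}$ gives a $G_\chi$-module on which $A$ acts trivially, i.e. an $H_\chi$-representation, the asserted lift. Irreducibility is the Clifford-correspondence step: given a nonzero $G_\chi$-submodule $U\subseteq V_\chi$, the sum $\sum_{t\in T}g_tU$ is direct (its summands lie in distinct isotypic components), is $G$-invariant, hence equals $V$, forcing $U=V\cap V_\chi=V_\chi$; the same argument applies verbatim to any $V_{\chi^{t}}$.

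I expect the one genuinely hypothesis-dependent step, as opposed to routine orbit-and-stabiliser bookkeeping, to be the semisimplicity of $\pi|_A$ in (I): for infinite abelian $A$ a finite-dimensional representation need not be completely reducible, so one must really exploit the $G$-irreducibility through the sum-of-conjugates construction together with the existence of a simultaneous eigenvector. Everything downstream is standard, the only infiniteness-sensitive check being that $T=H/H_\chi$ is finite, which is automatic since $\dim V<\infty$.
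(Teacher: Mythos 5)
Your proposal is correct and follows essentially the same route as the paper's proof: a simultaneous eigenvector for the commuting family $\pi(A)$, the orbit-span argument showing $V$ is spanned by translates $\pi(h)v$ affording conjugate characters (giving (I) and (I')), transitivity plus equal multiplicities on isotypic components for (II), and the stabiliser-invariance of $V_\chi$ with the orbit-sum contradiction for irreducibility in (III). The only differences are cosmetic — you exhibit the explicit intertwiners $\pi(g_t)\colon V_\chi\to V_{\chi^t}$ where the paper invokes $\pi^h\cong\pi|_A$ by normality, and you make the untwisting by $\tilde\chi^{-1}$ in (III) explicit where the paper simply says the component ``lifts'' — so there is nothing further to flag.
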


\begin{proof}
(I) 
As $\pi(A)$ is a collection of commuting matrices, 
we may find a common eigenvector 
$v \in V$ for $\pi(A)$, 
{i.e. a 1-dimensional $A$-subrepresentation of $V$}.
{Fixing such a $v$, define $\lambda: A \rightarrow \C^\times$ by $av =\lambda_a v$.}
Let $W$ be the span of all $\pi(h)v$ for $h\in H$ 
- i.e. the $H$-representation induced from $v$. 
Consider any $a\in A$, and $h \in H$. Since 
$ah= h h^{-1} ah = h \psi_h(a)$ by (\ref{eq:defsd0}) and (\ref{pa:conj-auto}) we have
\beq \label{eq:piapihv} 
\pi(a)[\pi(h)v]= \pi(ah)v = \pi(h\psi_{h}(a))v = \pi(h)\pi(a')v=\lambda_{a'}\pi(h)v
\eq  
for  $a' = \psi_h(a)\in A$. 
Varying $a$ and $h$, we see from this that $W$ 
is also closed under the $A$ action, which is indeed diagonal here, and hence $W$  
is a subrepresentation of $V$.  
But since $V$ is irreducible, $W=V$. 
Let $\mathcal{B}$ be a basis of $W$ of the form $\{\pi(h_i)v\}_{i=1}^k$.  
Now note that $\pi(A)$ acts diagonally on $\mathcal{B}$. 
We have shown (I).

(I') Observe that $\lambda$ is one of our irreducible $A$-characters, with $\lambda(a) = \lambda_a$. 
And observe from (\ref{eq:piapihv}) that for any $a,h$ we have  
$a \, (\pi(h)v ) = \lambda^h(a)  \, (\pi(h)v)$, 
so the 1-dimensional $A$-rep spanned by $\pi(h)v$ is in the $H$-orbit of $\lambda$. This proves (I').

(II) 
Write 
$m_\chi$ for the multiplicity of $\chi$ as a summand in $\pi|_A$. In other words we 
{decompose $\pi|_A=\bigoplus_{\chi\in\widehate{A}}m_\chi\chi$.  
Fix one such $\chi$ with $m_\chi\geq 1$.
By the normality of $A$, for any $h\in H$ the $A$-representation given by $\pi^h(a)=\pi(h^{-1}ah)$ is isomorphic to $\pi|_A$ so
we find that 
for each $h \in H$ 
the $H$-twisted character $\chi^h(a):=\chi(h^{-1}ah)$ also appears with multiplicity $m_\chi$.  Thus the multiplicity $m_\chi$ is constant on $H$-orbits.
On the other hand, by the irreducibility of $\pi$ 
the characters appearing in $\pi|_A$ 
must be permuted transitively by $H$: the sum of the $\chi^h$-isotypic subspaces over all $h\in H$ is $H$-invariant and hence $G$-invariant. Thus, there is only one orbit, and we set $m:=m_\chi$. This proves (II).

(III)  The $\chi$-isotypic component $(\chi^{\oplus m},V_\chi)$ is $H_\chi$-stable 

and thus lifts to an $H_\chi$-representation $\tau$ of dimension $m$.  If $\tau$ had a non-trivial subrepresentation then taking the $H$-orbit of $\tau$ would yield a subrepresentation of $G$, contradicting irreducibility.}

\end{proof}

We now recall the Mackey--Clifford (``little groups'') machine in this setting, see e.g. \cite[Chapters 7 and 8]{Serre} for the finite group version, which we adapt using Theorem \ref{thm:infinite clifford}.  Indeed, the construction and proof of the proposition below are 
otherwise 
identical to the finite group case found in \cite[Section 8.2]{Serre}.

\mdef  \label{de:algorithm}
{In order to construct all irreducible representations of $G=A\rtimes H$ we proceed as follows: }

\begin{enumerate}
\item Fix any irreducible character \(\chi\in{A^*}\).

\item 

Determine $H_\chi$; and determine its irreducible representations. 
\item 
Form \(G_\chi:=\A\rtimes H_\chi\). There is a canonical extension of \(\chi\) to a one-dimensional representation of \(G_\chi\) given by
\[
(x,h)\longmapsto \chi(x)\qquad (x\in \A_n,\; h\in H_\chi),
\]
since \(h\) fixes \(\chi\) by definition. 
Fix any irreducible representation $\tau$ of \(H_\chi\). 
We obtain a representation
\[
\chi\otimes \tau\quad \text{of }G_\chi,\qquad (x,h)\mapsto \chi(x)\,\tau(h).
\]

\item Induce to \(G\):
\[
\rho(\chi,\tau):=\mathrm{Ind}_{G_\chi}^{G}(\chi\otimes \tau).
\]
\end{enumerate}

\mdef  Obviously steps (2) and (4) may be hard in general. We will discuss feasibility when we treat examples shortly.  

Then

\begin{theorem}
\label{th:allreps}
Let $G = A \rtimes H$ as in \ref{thm:infinite clifford} 
above. 
Let $\chi$ be an irreducible $A$-character; and $\tau$ be an irrep of 
$H_\chi = Stab_H(\chi)$. Then
\\
(I) The rep 
\(\rho(\chi,\tau)\) is irreducible; and 
\\ 
(II) every irreducible representation of \(G\) is obtained this way. Moreover,
\[
\dim \rho(\chi,\tau)= [H:H_\chi]\cdot \dim \tau .
\]
(III) If $\rho(\chi_1,\tau_1)$ and $\rho(\chi_2,\tau_2)$ are isomorphic then $\chi_2=(\chi_1)^h$ 
\ppmm{for some $h \in H$}
and $\tau_2\cong \tau_1$.
\end{theorem}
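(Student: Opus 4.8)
The plan is to follow the classical little-groups argument of \cite[\S8.2]{Serre}, replacing each appeal to finiteness of $H$ or $G$ by an appeal to Theorem~\ref{thm:infinite clifford}. Throughout I write $W:=\rho(\chi,\tau)=\mathrm{Ind}_{G_\chi}^{G}(\chi\otimes\tau)$ and $W_{\chi^t}$ for the $\chi^t$-isotypic component of $W|_A$. By the construction of the induced module one has $W|_A\cong\bigoplus_{t\in T}(\chi^t)^{\oplus\dim\tau}$, so $A$ acts diagonalisably on $W$ and $G$ permutes the components $\{W_{\chi^t}\}_{t\in T}$ exactly as it permutes the cosets $G/G_\chi$ (equivalently $H/H_\chi$, since $A\subseteq G_\chi$), in particular transitively. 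The main obstacle is purely the passage from finite to infinite $H$: I must ensure that $W$ is a genuine finite-dimensional module, i.e.\ that $[H:H_\chi]<\infty$, and that the internal identification of a given $V$ with an induced module does not secretly use finiteness of $H$. Both points are supplied by Theorem~\ref{thm:infinite clifford}, which guarantees that $A$ acts semisimply and that the $A$-characters occurring in any finite-dimensional irreducible form a single finite $H$-orbit; once this is in hand the classical argument transports essentially verbatim.

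For part (II) and the dimension formula, start from an irreducible $(\pi,V)$ of $G$. Theorem~\ref{thm:infinite clifford} supplies an $A$-character $\chi$ occurring in $\pi|_A$, its stabiliser $H_\chi$, the isotypic component $V_\chi=\chi^{\oplus m}$, and an irreducible $H_\chi$-representation $\tau$ of dimension $m$ carried by $V_\chi$. As $A$ acts on $V_\chi$ through $\chi$ and $H_\chi$ through $\tau$, we have $V_\chi\cong\chi\otimes\tau$ as a $G_\chi$-module. The spaces $\pi(t)V_\chi$ for $t\in T$ are precisely the distinct isotypic components of $V|_A$, so $V=\bigoplus_{t\in T}\pi(t)V_\chi$; this internal direct sum, with $G$ acting on the index set $T$ as on $G/G_\chi$, is exactly the defining structure of the induced module, whence $V\cong\mathrm{Ind}_{G_\chi}^{G}(\chi\otimes\tau)=\rho(\chi,\tau)$. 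The formula $\dim\rho(\chi,\tau)=[H:H_\chi]\cdot\dim\tau$ then follows from $|T|=[H:H_\chi]$ and $\dim(\chi\otimes\tau)=\dim\tau$.

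For part (I), let $0\neq U\subseteq W$ be a $G$-subrepresentation. Since $A$ acts diagonalisably, $U$ respects the isotypic decomposition, $U=\bigoplus_{t\in T}(U\cap W_{\chi^t})$. Put $U_\chi:=U\cap W_\chi$; it is $G_\chi$-stable, because $A$ acts on $W_\chi$ by the scalar character $\chi$ and $H_\chi$ preserves $W_\chi$ by definition of the stabiliser. But $W_\chi\cong\chi\otimes\tau$ is $G_\chi$-irreducible: a subspace is $G_\chi$-stable iff it is $\tau(H_\chi)$-stable, as the $A$-part acts by scalars, and $\tau$ is irreducible. Hence $U_\chi\in\{0,W_\chi\}$. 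For each $t$, the $G$-stability of $U$ together with $\pi(t)W_\chi=W_{\chi^t}$ gives $U\cap W_{\chi^t}=\pi(t)U_\chi$. Therefore $U_\chi=0$ forces $U=0$, while $U_\chi=W_\chi$ forces $U\supseteq\sum_{t}\pi(t)W_\chi=W$, so $W$ is irreducible.

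Finally, for part (III), an isomorphism $\rho(\chi_1,\tau_1)\cong\rho(\chi_2,\tau_2)$ restricts to an $A$-isomorphism, so the two modules exhibit the same set of $A$-characters; by part~(II) these are the full $H$-orbits of $\chi_1$ and of $\chi_2$, forcing $\chi_2=\chi_1^{h}$ for some $h\in H$. Conjugating by $h$ (which sends $\rho(\chi_1,\tau_1)$ to $\rho(\chi_1^{h},\tau_1^{h})$) we may assume $\chi_1=\chi_2=\chi$. Recovering the $\chi$-isotypic component from each side as in part~(II) identifies $\chi\otimes\tau_1\cong\chi\otimes\tau_2$ as $G_\chi$-modules, and tensoring by the inverse character $\chi^{-1}$ cancels $\chi$ to give $\tau_1\cong\tau_2$ as $H_\chi$-modules. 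As noted, the only place requiring genuine care beyond the finite-group template is checking that the relevant index sets $T$ are finite, which is exactly what Theorem~\ref{thm:infinite clifford} delivers.
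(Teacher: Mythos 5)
Your proof is correct, and it is exactly the argument the paper intends: the paper's proof of Theorem~\ref{th:allreps} is simply the citation ``See \cite{Serre}, deploying Thm.~\ref{thm:infinite clifford} at the appropriate moment,'' i.e.\ Serre's little-groups argument from \S 8.2 with Theorem~\ref{thm:infinite clifford} standing in for the finiteness assumptions, which is precisely what you have written out in detail (isotypic decomposition under $A$, $G_\chi$-irreducibility of $\chi\otimes\tau$, system-of-imprimitivity identification with the induced module, and orbit/conjugation bookkeeping for uniqueness). No gaps; your fleshed-out version is a faithful expansion of the proof the paper leaves implicit.
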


\begin{proof}
    See \cite{Serre}, \ppmm{deploying Thm.\ref{thm:infinite clifford} at the appropriate moment}.
\end{proof}

\medskip 

\subsection{Application to $\MDD_n$ - generalities}

\newcommand{\chill}{\underline{\chi}}

We now apply Thm.\ref{th:allreps} to our case of $\MDD_n=\A_n\rtimes \Sym_n$.  Fix an irreducible character \(\chi\in{\A_n^*}\). 
Note that $\A_n$ is free-abelian on its generators $x_{ij}$ so an irreducible character is given by an element $v$ of $(\C^\times)^{{{n}\choose{2}}}$: $\chi_v (x_{ij} ) \; = v_{ij}$ and \(\chi_v(x_{ji})=v_{ij}^{-1}\). 

In order to give a specific $\chi$ in case  $n>2$ let us abuse notation slightly and write 
$$
\chill = (\chi(x_{12}),\chi(x_{13}),...,\chi(x_{1n}),\chi(x_{23}),...,\chi(x_{2n}),...,\chi(x_{n-1,n}))
$$
- for example 
$\chill =( \chi(x_{12}),\chi(x_{13}),\chi(x_{23}) )$ 
in case $n=3$. 

Because the set of $\A_n$-characters is very large, but the set of possible stablisers is finite, it is convenient to organise the characters according to the stabiliser. To see how to do this, we first do some examples with low $n$.

\medskip 

 The way that we will need to use the irrep construction here in practice is slightly indirect. What follows are first a couple of direct examples, and then some illustrative computations in the format that will be useful. 

\mdef Example. Consider $n=2$. 
\\ 
Step \ref{de:algorithm}(1-2): for $\chi$ we choose a single $v_{12}$. 
Here $H=\Sym_2$ which is abelian, but $\sigma_1$ acts on $\chi$ by $v_{12} \mapsto v_{21} = v_{12}^{-1}$, 
so either (i) $v_{12}=\pm 1$ and $H_\chi =H$;
or (ii) $H_\chi$ is trivial. 
\\ 
Step \ref{de:algorithm}(3):
In case (i)  then $H_\chi = H$ has two irreps. In both cases the Kronecker is 1d 
and Step~(4) is trivial, so the reps will be clear.
\\
In case (ii) there is nothing to do for Step (3). 
For Step (4) the cosets are the elements of $H=\Sym_2$ so we will induce to a 2d rep.
The simple characters contained are $\chi$ and $\chi^{\sigma_1}$. 
We have 
$ \rho(\chi,1)(x_{12}) = diag(v_{12}, v_{12}^{-1})$ and 
$ \rho(\chi,1)(\sigma_1) =  \begin{bmatrix}0&1\\[2pt]1&0\end{bmatrix}$.

\mdef Example. Consider $n=3$. 
\\
Step (1-2): For $\chi$ we choose $v_{12},v_{13},v_{23} \in \C^\times$. 
Here $H=\Sym_3$, so there are subgroups of order 1,2,3 and 6. 
\\
(i) To obtain $H_\chi = H$ we need $\chi^{w}=\chi$ for $w \in \Sym_3$, so in 
particular 
$\chi(x_{12}) = v_{12} = \chi^{\sigma_1}(x_{12}) = \sigma_1 v_{12} = v_{12}^{-1}$ so $v_{12}= \pm 1$;
and similarly for $v_{ij}$; furthermore $v_{12} = \sigma_2 v_{12} = v_{13}$ and so on.
Thus there are two characters with this property: $\chill=\pm(1,1,1)$.
\\
(ii) To obtain $H_\chi = \{1,\sigma_1 \}$ (a representative of the subgroups of order 2) we need $\chi^{\sigma_1}=\chi$ so again $v_{12} = \pm 1$; furthermore $v_{23} = \sigma_1 v_{23} = v_{13}$. 
Overall $\chill=(\pm 1,v_{13},v_{13})$, with $v_{13}$ generic. 
\\
(iii) To obtain $H_\chi = \{1,\sigma_1\sigma_2,\sigma_2\sigma_1 \}$ we need $\chi^{\sigma_1\sigma_2}=\chi$, so $v_{12} = \sigma_1 \sigma_2 v_{12} = v_{23}$
and similarly $v_{23} = v_{31} = v_{13}^{-1}$. 
Thus $\chill=(v_{12},v_{12}^{-1},v_{12})$.
\\ 
(iv) If we choose $\chi$ generically then $H_\chi$ is trivial.
\medskip 
\\
Step (3-4): In case (i) then we obtain $\rho(\chi=\pm1,\lambda)$ where $\lambda\in irreps(\Sym_3) = \{ (3), (2,1), (1^3) \}$ in the usual notation. 
\\
In case (ii) we have $H_\chi \cong \Sym_2$ with two 1d irreps. 
The induction cosets are 
$\{ \{1,\sigma_1 \}, \{ \sigma_2, \sigma_2\sigma_1 \}, \{  \sigma_1\sigma_2 , \sigma_1 \sigma_2 \sigma_1  \} \}$. 
\\
In case (iii) we have $H_\chi \cong C_3$ (i.e. the cyclic group $\Z/3\Z$ taken multiplicatively) with three 1d irreps. 
\\
In case (iv) 
then $\rho(\chi,1)$ is 6d. In particular writing $\Sym_3$ in the order 
$\{ 1, \sigma_1 , \sigma_2, \sigma_1 \sigma_2 , \sigma_2 \sigma_1, \sigma_1 \sigma_2 \sigma_1 \}$ then 
$\rho(\chi,1)(x_{12}) = diag(v_{12}, v_{12}^{-1}, v_{13}, \ldots,v_{23}^{-1})$. 
\\

\mdef Observe that for general $n$ there are some patterns in the general construction, but also components of increasing complexity. We will not need these here, since our objective is to provide tools to analyse MD representations in rank 2,
and for this the complexity can be capped, as we will see.

\mdef 
From the isomorphism \eqref{iso} 
\ppmm{
we can see} 
that for any representation $\rho$ of $\MDD_n$ we have 
\beq  \label{eq:det1}
\det(\rho(r_is_i))=\det(\rho(x_{i,i+1}))=\pm 1=\det(\rho(x_{jk})).
\eq for all $j,k$.  Indeed, 
since the $r_i,s_i$ are involutions the determinant of any of their images is $\pm 1$, and hence the same holds for their products, such as $x_{i,i+1}$.  Then since the $x_{jk}$ are all conjugate we deduce the result.

\subsection{Complete classification of one and two dimensional irreducibles} $\;$ 

Observe from \ref{th:allreps}(II) that for a 1d rep we require $H_\chi =H$. 
This allows only for the two choices for $\chi$, with $\chi(x_{ij})=\pm 1$. 
We must further choose a 1d rep $\tau$ of $H_\chi = H$. 

In our case, for any $n$, we have $H= \Sym_n$, so $\tau$ is either the trivial or the alternating. 
Altogether we have the four 1d reps starting with the trivial and simply varying the signs of the images of $r_1$ and $s_1$.

\begin{example}\label{ex: 2d irreps MD_3}[Two-dimensional irreducibles]
Here we look specifically for 2d irreps of \(\MDD_n\).

For \(n=2\), \(\MDD_2\) is the infinite dihedral group; 
it has 
\(2\)-dimensional irreps where, in a diagonal basis,
\(X_{12}=\mathrm{diag}(a,a^{-1})\) with \(a\in\mathbb{C}^\times\) and \(S_1=\begin{bmatrix}0&1\\[2pt]1&0\end{bmatrix}\).

Assume \(n\ge 3\). By \eqref{eq:det1}, we may take either \[X_{12}=\begin{bmatrix} -a&0\\[2pt]0&a^{-1}\end{bmatrix}\quad \textrm{or}\quad
X_{12}=\begin{bmatrix} a&0\\[2pt]0&a^{-1}\end{bmatrix}.
\]  Since $X_{12}$ is conjugate to its inverse, in the first case we must have $\{-a,a^{-1}\}=\{-a^{-1},a\}$, thus $a=a^{-1}$, in which case $X_{12}=X_{12}^{-1}$ with $a^2=1$.  But then $X_{12}$ and $S_1$ commute and hence are simultaneously diagonalisable. But then each $X_{ij}=\pm X_{12}$ and so each $S_i$ commutes with the $X_{ij}$.  This implies that we have a $2$-dimensional irreducible representation of $\Sym_n$, implying $n=3$ or $4$.

In the second case if \(a^2=1\) then \(X_{12}=\pm I\), hence all \(X_{ij}=\pm I\), so the \(S_i\) commute with \(\A_n\) and \(\rho_n|_{\Sym_n}\) is an irreducible \(2\)-dimensional representation of \(\Sym_n\). This forces \(n=3\) or \(4\) (the only \(\Sym_n\) with \(2\)-dimensional irreps).

Now assume \(a^2\neq 1\). We claim \(X_{13}=X_{12}^{-1}\). Indeed, if \(X_{13}=X_{12}\), then
\(
S_2 X_{12} S_2^{-1}=X_{13}=X_{12},
\)
so \(S_2\) commutes with \(X_{12}\) and hence with \(X_{12}^{-1}\). But then \(S_2 X_{23} S_2^{-1}=X_{23}\), contradicting the relation that \(X_{23}\) is conjugate to its inverse via \(S_2\), unless \(a^2=1\). Thus \(X_{13}=X_{12}^{-1}\), and consequently \(X_{23}=X_{12}\).

Up to swapping the basis and scaling, this forces
\[
S_1=\begin{bmatrix}0&1\\[2pt]1&0\end{bmatrix},\qquad
S_2=\begin{bmatrix}0&\omega\\[2pt]\omega^{-1}&0\end{bmatrix},\qquad \omega=e^{2\pi i/3},
\]
which is equivalent to the standard \(2\)-dimensional irrep of \(\Sym_3\).

\paragraph{Second approach (direct Mackey induction).}
Fix $a\in\C$ with $a^2\neq 1$ and define a character $\chi^a$ of $\A_3\cong \Z^3$ on generators by
\[
(x_{12},x_{13},x_{23})\longmapsto (a,a^{-1},a).
\]
Let $C_3:=\langle(1\,2\,3)\rangle<\Sym_3$ be the stabiliser of $\chi^a$. For $i=0,1,2$ let
$\tau_i$ be the irreducible character of $C_3$ with $\tau_i((1\,2\,3))=\omega^i$, where $\omega=e^{2\pi i/3}$.
Then we get $1$-dimensional representations of $\A_3\rtimes C_3$ by
\[
\pi_i^a(xh)=\chi^a(x)\,\tau_i(h)\qquad (x\in\A_3,\ h\in C_3).
\]
Induce to $\A_3\rtimes \Sym_3$:
\[
\rho_i^a:=Ind_{\A_3\rtimes C_3}^{\A_3\rtimes\Sym_3}(\pi_i^a).
\]
Since $[\Sym_3:C_3]=2$, each $\rho_i^a$ is $2$-dimensional.

Choose left coset representatives $e,(1\,2)$ and let $v_1,v_2$ be the corresponding basis of
$eV\oplus(1\,2)V$, where $V$ is the $1$-dimensional space of $\pi_i^a$.
Then, using $x_{12}(1\,2)=(1\,2)x_{12}^{-1}$ etc., we get
\[
\rho_i^a(x_{12})=diag(a,a^{-1}),\qquad
\rho_i^a(x_{13})=diag(a^{-1},a),\qquad
\rho_i^a(x_{23})=diag(a,a^{-1}).
\]
By our coset choice,
\[
\rho_i^a((1\,2))=\begin{bmatrix}0&1\\[2pt]1&0\end{bmatrix}.
\]
Using $(2\,3)(1\,2)=(1\,3\,2)e$ and $(2\,3)e=(1\,2)(1\,2\,3)$,
\[
\rho_i^a((2\,3))=
\begin{bmatrix}
0 & \tau_i(1\,2\,3)\\[2pt]
\tau_i(1\,3\,2) & 0
\end{bmatrix}
=
\begin{bmatrix}
0 & \omega^i\\[2pt]
\omega^{-i} & 0
\end{bmatrix}.
\]
For $i=1$ this reproduces exactly the matrices in the first $2$-dimensional construction above.

\medskip
This gives a family of $\MD_3$ representations; it also occurs as the $\Sym_3$-block inside any $\MD_n$ representation with $n\ge 3$ obtained from the same orbit data.
For $n\ge 4$, restrict to $\Sym_4$. The only $2$-dimensional irrep of $\Sym_4$ that restricts irreducibly to $\Sym_3$ satisfies $S_3=S_1$ on the $\Sym_3$ block. In our family,
\[
S_1X_{12}S_1^{-1}=X_{12}^{-1},\qquad S_3X_{12}S_3^{-1}=X_{12},
\]
so a lift to $\Sym_4$ exists only in the degenerate case $a^2=1$.

\medskip\noindent
\emph{Conclusion.} $\MD_n$ has irreducible $2$-dimensional representations only for $n=3$ and $n=4$.
For $n=3$ there is a $1$-parameter family (parameter $a$ with $a^2\ne 1$, and $i\in\{0,1,2\}$ indexing the $C_3$ character); for $n=4$ there are exactly two, corresponding to $X_{ij}=\pm I$ (the $a^2=1$ case).

\end{example}

\begin{example}[Three-dimensional irreps with order-2 stabilizer]\label{ex: 3d irreps of MD3}
Take \(\chi=\eta_a^\pm\in{\A_3}^*\) given by
\[
(x_{12},x_{13},x_{23})\mapsto (a,a,\pm 1).
\]
Its stabilizer in \(\Sym_3\) is \(H=\langle(2\,3)\rangle\cong \Sym_2\). Let \(\varepsilon\) be either the trivial or sign character of \(H\), and set
\(\pi_a^{\pm,\varepsilon}:=\chi\boxtimes \varepsilon\) on \(\A_3\rtimes H\).
Inducing to \(\A_3\rtimes \Sym_3\) with coset reps \(e,(1\,2),(1\,3)\) yields a \(3\)-dimensional irrep \(\rho_a^{\pm,\varepsilon}\) with
\[
\rho_a^{\pm,\varepsilon}(x_{12})=\mathrm{diag}(a,a^{-1},\pm 1),\quad
\rho_a^{\pm,\varepsilon}(x_{23})=\mathrm{diag}(\pm 1,a,a^{-1}),\quad
\rho_a^{\pm,\varepsilon}(x_{13})=\mathrm{diag}(a,\pm 1,a^{-1}),
\]
and
\[
\rho_a^{\pm,\varepsilon}((1\,2))=\begin{bmatrix}0&1&0\\[2pt]1&0&0\\[2pt]0&0&\theta\end{bmatrix},\qquad
\rho_a^{\pm,\varepsilon}((2\,3))=\theta\begin{bmatrix}1&0&0\\[2pt]0&0&1\\[2pt]0&1&0\end{bmatrix},
\quad \theta:=\varepsilon((2\,3))\in\{\pm 1\}.
\]
\end{example}

\ignore{{
\begin{example}[Generic six-dimensional case\ecr{[merge with the above?]}]
For \(\chi\) with parameters \((a,b,c)\) such that the multiset \(\{a^{\pm1},b^{\pm1},c^{\pm1}\}\) has six distinct entries, the stabilizer is trivial. Then
\(\rho(\chi,\mathbf{1})\) has dimension \(6\); restricted to \(\Sym_3\) it is the left-regular representation, and the diagonal entries for \(x_{12},x_{13},x_{23}\) on the induced basis are respectively
\[
(a,a^{-1},c^{-1},b,b^{-1},c),\quad
(b,c,b^{-1},a,a^{-1},c^{-1}),\quad
(c,b,a^{-1},c^{-1},b^{-1},a).
\]\ecr{[typo here...?]}
\end{example}
}}

\begin{example}[Special characters \(\pm(1,1,1)\)]
If \(\chi(x_{12},x_{13},x_{23})=\pm(1,1,1)\), then all \(X_{ij}=\pm I\) and the \(\Sym_3\)-subgroup may act by any irreducible representation of \(\Sym_3\).  This produces irreps of dimensions $1$ and $2$.  Notice that the $2$ dimensional case is already covered by Example \ref{ex:2d irreps MD_3}
\end{example}

\mdef While the classification problem for general $n$ is likely a bit tedious there are some general observations one can make.  
\begin{enumerate}
    \item The subgroups of $\Sym_n$ that can appear as $H_\chi$ for some character $\chi=(a_{1,2},\ldots, a_{n-1,n})$ of $\A_n$ may not be as plenteous as one might naively imagine. 
    If $H$ acts $2$-transitively then $a_{12}=a_{21}$ and $a_{ij}=a_{12}^{\pm 1}$ for all $i,j$ so we have $a_{ij}=\pm 1$, for all $i<j$ and hence $\chi$ is given by $\pm(1,\ldots,1)$.  But this has stabiliser $S_n$.
    
    \item If $H$ acts $2$-homogeneously, i.e., if it acts transitively on the sets $\{i,j\}$ for $i\neq j$, then we must have $a_{ij}=a_{km}^{\pm 1}$ for all quadruples $(i,j,k,m)$.  If it is not a $2$-transitive action then no $(i,j)$ has $(j,i)$ in its orbit, so there are exactly two orbits. Thus  the character takes value $a$ on one orbit and $a^{-1}$ on the other.  For example if we take the subgroup $C_3$ of $S_3$ then we have $(1,2),(2,3)$ in one orbit and $(1,3)$ in the other.
    \item A convenient geometric description of the action of $\Sym_n$ on $\A_n$ is to label the edges of $K_n$ by elements of $\A_n$.  
One may partition the edges and $H$ be the subgroup of $\Sym_n$ that preserves the partition.  One can define a character by assigning a complex value $a_k$ for each member of the partition $P_k$, with the rule that the if some member of $H$ flips some $(i,j)\mapsto (j,i)$ in $P_k$ then $a_k=\pm 1$.  For example if we partition the edges of $K_3$ into $[\{1,2\},\{1,3\}]$ and $[\{2,3\}]$ then the group that preserves this partition is $\langle (2\; 3)\rangle$.  Since $(2\; 3)$ doesn't flip $(1,2)$ or $(1,3)$, but does flip $(2,3)$ we get the character $(a,a,\pm 1)$.
\item Note that a representation $\pi$ of $\MD_n$ with $\pi(r_i)=\pm\pi(s_i)$ corresponds to a representation of $\MDD_n$ with $\pi(x_{ij})=\pm I$.  Such a representation is clearly semisimple, with irreducible constituents obtained from the characters $\pm(1,\ldots,1)$.  This is a generalisation of the \emph{Wangian} local representations described below.

\end{enumerate}

\newcommand{\soutx}[1]{} 

\newcommand{\codt}{{\mathbf Dot product}} 

\newcommand{\SSS}{{\mathbb{S}}}
\newcommand{\B}{{\mathbb{B}}}



\newcommand{\MATCHA}{\MAT_{ACC}}
\newcommand{\botimes}{ \bar{\tens} }
\newcommand{\bartens}{ \bar{\tens} }
\newcommand{\ssss}[1]{\noindent {\bf{#1}} } 

\newcommand{\MATCH}{\mathbf Match}
\newcommand{\MAT}{{\mathbf Mat}} 
\newcommand{\VEC}{\mathbf Vec}
\newcommand{\mqxx}[1]{}
\newcommand\myeq{\mathrel{\overset{\makebox[0pt]{\mbox{\normalfont\tiny\sffamily Mult A,B}}}{=}}}
\newcommand\mmyeq{\mathrel{\overset{\makebox[0pt]{\mbox{\normalfont\tiny\sffamily Mult A,B,C}}}{=}}}
\newcommand\mmmyeq{\mathrel{\overset{\makebox[0pt]{\mbox{\normalfont\tiny\sffamily Mult assoc}}}{=}}}
\newcommand\mmmmyeq{\mathrel{\overset{\makebox[0pt]{\mbox{\normalfont\tiny\sffamily ret comp}}}{=}}}

\newcommand{\U}[1]{\underline{#1}}
\newcommand{\NN}{\U{N}}

\newcommand{\Lamm}[2]{\Lambda^{#1}_{#2}}  
\newcommand{\lamm}{{\bf m}}

\newcommand{\Symm}[1]{\Sigma_{#1}}
\newcommand\inner[2]{\langle #1, #2 \rangle}
\newcommand{\ff}{{\mathsf f}}

\renewcommand{\fff}[1]{{\bf{#1}}}

\newcommand{\lav}{\lambda}
\newcommand{\mw}{\mu}
\newcommand{\off}{\overline{\ff}}
\newcommand{\unk}[2]{\raisebox{-5pt}{${#1 \atop #2}$}}
\newcommand{\unkk}[2]{\raisebox{-5pt}{$\overset{\scalebox{1.02}{$#1$}}{\scriptscriptstyle{#2}}$}} 
\newcommand{\unkkk}[2]{\raisebox{-4pt}{\begin{array}{c} #1 \\ {}^{#2}\end{array}}}

\section{Charge-conservation with glue: Appendix extracted from \cite{tbd}}

In this section we use Almateari's methods 
\cite{AlmateariThesis,Almateari24}
to introduce a new strict monoidal 
category, again intended as a target in representation theory;
and deduce some general representation theoretic properties.

Note that 
the charge-conserving (CC) category 
$\MATCH^N$ and 
the additive charge-conserving (ACC) category 
$\MATCHA^N$ coincide for $N=2$, and that matrices of this type
do not exhaust the types seen in Hietarinta's rank-2 braid representation classification \cite{Hietarinta1992}. 
A type much closer to including all the varieties in the classification is given by the `CCwg' matrices to be described below. 
As shown in \cite{Almateari24}, the generic ansatz for a braid representation in ACC has a lot more parameters than CC. In this sense ACC is a big step up in difficulty from the solved CC case, and indeed the advantageous calculus of CC does not have a directly similarly strong lift to ACC, so that after $N=3$ the all-higher-ranks ACC braid rep classification problem remains open. Like ACC, CCwg also extends CC, but a vestige of the CC calculus is retained, so it offers the twin advantages of both containing a more-complete transversal in rank-2; and more tools towards a potential all-rank solution. 
The analogues of non-semisimplicity are not direct, or even canonical, in higher representation theory, but CCwg can also be seen as a kind of generalisation to allow a form of higher non-semisimplicity, as we shall see.

\medskip 

We first recall some notation from 
\cite{MR1X,AlmateariThesis}.

\noindent 
For $N,n \in \N$, 

$\NN = \{1,2,...,N \}$

$
\Lamm{N}{n}  = \{ \lamm \in \N_0^N \; : \;  \lamm.(1,1,...,1)\;  =n   \}
$. 

The function $\#_i : \U{N}^n \rightarrow \N$ counts the number of copies of symbol $i$ in a word. 

The function $\ff: \U{N}^n \rightarrow \Lamm{N}{n}$ is given 
by $\ff(v)_i = \#_i(v)$.

The Kronecker product (with Ab and revlex conventions) is given, 
for $A \in \MAT^N(n,n)$ and $B \in \MAT^N(m,m)$, by 
\beq \label{eq:MATNKronecker}  
\langle w | A \otimes B | v \rangle \; = \langle \unkk{w}{n-} | A |  \unkk{v}{n-} \rangle \langle \unkk{w}{-m} |B| \unkk{v}{-m} \rangle  
\eq 
where  $w,v \in \NN^{n+m}$ and 
$\unkk{w}{n-}$ denotes $w_1 w_2 ... w_n$ and $\unkk{w}{-m} = w_{n+1}...w_{n+m}$.



\ignore{{
In this section we use Almateari's methods to introduce a new strict monoidal 
category, again intended as a target in representation theory.
(Note that $\MATCH^N$ and $\MATCHA^N$ coincide for $N=2$, and that matrices of this type
do not exhaust the types seen in Hietarinta's rank-2 braid representation classification \cite{Hietarinta92}. 
A type much closer to including all the varieties in the classification is given by the `CCwg' matrices to be described below. As shown in this paper, the generic ansatz for a braid representation in ACC has a lot more parameters than CC. In this sense ACC is a big step up in difficulty from the solved CC case, and indeed the advantageous calculus of CC does not have a directly similarly strong lift to ACC, so that after $N=3$ the all-higher-ranks ACC braid rep classification problem remains open. Like ACC, CCwg also extends CC, but a vestige of the CC calculus is retained, so it offers the twin advantages of both containing a more-complete transversal in rank-2; and more tools towards a potential all-rank solution. 
The analogues of non-semisimplicity are not direct, or even canonical, in higher representation theory, but CCwg can also be seen as a kind of generalisation to allow a form of higher non-semisimplicity, as we shall see.)

\medskip 

We first recall some notation from the paper.

For $N,n \in \N$, 

$\NN = \{1,2,...,N \}$

$
\Lamm{N}{n}  = \{ \lamm \in \N_0^N \; : \;  \lamm.(1,1,...,1)\;  =n   \}
$ 
as in \ref{de:Lamm}. 

The function $\ff: \U{N}^n \rightarrow \Lamm{N}{n}$ is given in \ref{de:ff}.
}} 

\newcommand{\MATCHwg}{\MATCH_g}
\newcommand{\rg}[1]{\textcolor{orange}{#1}}

\subsection{The main construction: the categories $\MATCHwg^N$}
\label{ss:constructCCwg}
$\;$ 

Here we give the construction, summarised by the well-definedness theorem, for the CC-with-glue categories $\MATCHwg^N$ - in (\ref{th:CCwg}). 
We need various inputs, starting with an order on words.

\mdef 
Revlex order example (order in the case $\U{3}^4 \rightarrow \Lamm{3}{4}$):
\[
\begin{array}{cccccccccccccc}
w    &    1111 & 2111 & 3111 & 1211 & 2211 & 3211 & 1311 & 2311 & 3311 & 1121 & ... 
\\
\ff(w) &    \underline{400}  & \underline{310}  & \underline{301} &  310  & \underline{220}   & \underline{211} &  301 & 211 & \underline{202} & 310 & ...
\end{array}
\]
In the $\ff(w)$ row we have underlined the first instance of each composition in this order.

Observe that 
the 
(revlex) 
first instance of 
a word $w \in \NN^n$ with 
$\ff(w) = \; \lambda \; =\; \lambda_1 \lambda_2 ...\lambda_N$ is the orbit element with 
weakly decreasing $w_i$:
\beq  \label{eq:orbrep}
\overline{\ff}(\lambda) \;  := \; 
\underbrace{N,N,...,N}_{\lambda_N \; copies}, 
 \underbrace{N\!-\!1, N\!-\!1, ..., N\!-\!1}_{\lambda_{N\!-\!1}},...,
 \underbrace{11,,1}_{\lambda_1} 
\eq

\mdef   \label{de:Lammo}
Fix $N \in \N$. For $n \in \N$ give an order $(\Lamm{N}{n}, <)$
by ordering $\NN^n$ in revlex, apply $\ff$, then $\lambda < \mu$ if $\lambda$ first appears 
before $\mu$ in sequence $\ff(\NN^n)$. 
\\
(For later use we set $\lambda \in \Lamm{N}{n}$, $\mu \in \Lamm{N}{n'}$ 
incomparable unless $n=n'$.)

\mdef   \label{de:CCwg}
We say  a matrix $M \in \MAT^N$ is {\em rank-$N$ charge-conserving-with-glue} 
(CCwg)
if $\langle w | M | v \rangle \neq 0 $ implies $\ff(w) \leq \ff(v)$
(that is, $\ff(w) > \ff(v)$ implies $\langle w | M | v \rangle = 0 $).
\\
(By the incomparability convention only the zero non-square matrices are CCwg.)

\mdef Examples. Let us write $\MATCHwg^N(n,n)$ for the set of CCwg matrices. Then
\[
\begin{bmatrix} 1 & \rg1 & \rg1 & \rg1 \\ &1&1&\rg1 \\ &1&1&\rg1 \\&&&1\end{bmatrix}
\in \MATCHwg^2(2,2)
\]
\[
\begin{bmatrix} 
1 & \rg1 & \rg1 & \rg1 & \rg1 &\rg1 &\rg1 &\rg1 &\rg1 \\
  & 1 & \rg1 & 1 &\rg1&\rg1&\rg1&\rg1&\rg1 \\
  &   & 1 & &  \rg1& \rg1&1&\rg1&\rg1 \\
  & 1 & \rg1  &1&\rg1&\rg1&\rg1&\rg1&\rg1 \\
  &&&&1&\rg1&&\rg1&\rg1 \\
  &&&&&1&&1 &\rg1\\
  &&1&&\rg1&\rg1&1&\rg1&\rg1 \\
  &&&&&1&&1&\rg1 \\
  &&&&&&&&1
  \end{bmatrix}
\in \MATCHwg^3(2,2)
\]
(omitted entries are 0). 
Indeed the black entry positions 
correspond to a row $w$ and column $v$ that 
satisfy $\ff(w) = \ff(v)$; 
and the orange satisfy $\ff(w) < \ff(v)$. 
\\ 
Observe that we have a kind of partially-sorted upper-block-triangularity. 

\mdef  \label{de:glue}
In general we call the positions 
$\langle w | M | v \rangle  $
in a matrix $M \in \MAT^N(n,n)$
that satisfy $\ff(w) < \ff(v)$ {\em glue} positions. 
\\
We may call positions that satisfy $\ff(w) = \ff(v)$ {\em CC} 
(charge-conserving)
{positions}. 

\newcommand{\KKK}{\overline{K'}}

\mdef  \label{de:K'K}
For each $N,n,m$ 
we write $K': \MATCHwg^N(n,m) \rightarrow \MATCHwg^N(n,m)$ for the projection 
that sends all glue positions to zero. 
And $\KKK$ for the complementary projection that sends all non-glue 
(i.e. CC) positions to zero. 
Observe that $  M \mapsto K'(M)$ also gives a well-defined map  
$K: \MATCHwg^N(n,m) \rightarrow \MATCH^N(n,m)$. 
Of course all these maps are linear. 
\\ 
We write $\gamma$ for the equivalence relation on $ \MATCHwg^N(n,m)  $ 
given by $a \gamma b$ if $a-b$ is non-zero at most in glue positions. 
Thus  $\MATCH^N(n,m) $ is a transversal of the classes of $\gamma$. 

\medskip

To prove our main theorem 
\ref{th:CCwg}
below we will  need to understand the order 
$(\Lamm{N}{n},>)$ a bit better. 

\ignore{{
\newcommand{\lav}{\lambda}
\newcommand{\mw}{\mu}
\newcommand{\off}{\overline{\ff}}
\newcommand{\unk}[2]{\raisebox{-5pt}{${#1 \atop #2}$}}
\newcommand{\unkk}[2]{\raisebox{-5pt}{$\overset{\scalebox{1.02}{$#1$}}{\scriptscriptstyle{#2}}$}} 
\newcommand{\unkkk}[2]{\raisebox{-4pt}{\begin{array}{c} #1 \\ {}^{#2}\end{array}}}
}}

\mdef   \label{de:Lammoo}
Define {\em an} order  $(\Lamm{N}{n},\prec)$  
by $\lav \prec  \mw$ 
(i.e. sequence $\lav_1 \lav_2 ... \lav_n \;\prec\; \mw_1 \mw_2 ... \mw_n$)
if the first difference from
the left is, for some $i$, $\lav_i \neq \mw_i$, and then $\lav_i > \mw_i$.

\mdef 
Example: 
Consider $\lambda = 53104$ and $\mu=53203 \in \Lamm{5}{13}$.
The first difference is $\lambda_3 = 1 < 2= \mu_3$, so $\lambda \succ \mu$.

To compare with $(\Lamm{N}{n},<)$: 
With $\NN^n$ in revlex order, the first  
instance of a word $w$ with $\ff(w)=$ 53104 $>$ (meaning first instance comes later in revlex order than)  the first instance of a word with $\ff(v)= 53203$.
For 53104 the first instance is the word 
$\off(53104) = 5555322211111$, by (\ref{eq:orbrep}).
For 53203 it is 5553322211111. 
Starting at 5553322211111 we count up, first to 
1114322211111, then 
\\ 2114322211111,
..., 5554322211111, 1115322211111, ..., 5555322211111.
\\
Observe that since early digits change quickest in revlex the right-hand end of 
the word (here 322211111) does not change in this sequence. 
The immediately preceding letter starts at 3 and gradually ascends to 5.

\mdef {\bf Proposition}. \label{pr:dagger}
Fix $N,n\in\N$. 
The order $(\Lamm{N}{n},\prec)$ from (\ref{de:Lammoo}) is our order 
$(\Lamm{N}{n}, <)$
from (\ref{de:Lammo}).

\medskip

\noindent {\em Proof}. 
Put $\NN^n$ in revlex order. 
Keep in mind (\ref{eq:orbrep}).
So suppose $\lambda \prec \mu$, and compare words $ \off(\lambda) , \off(\mu) $. 
The compositions are the same up to $\lambda_{i-1}$ for some $i$ 
(and then $\lambda_i > \mu_i$) 
so both words 
end in the same descending pattern 
$$
\underbrace{i,i,...,i}_{\mu_i},
\underbrace{i-1,i-1,...,i-1}_{\lambda_{i-1}},i-2,...,
\underbrace{1,1,...,1}_{\lambda_1}.
$$
Note that the immediately preceding letter is $i$ for  $\off(\lambda)$ and $>i$ for $\mu$
(as in the 53104 example). But of course here this digit is gradually ascending in revlex, so $\lambda<\mu$.
\qed 



\mdef 
For $v \in \NN^n$ we set $\#_i(v) = \ff(v)_i$, the number of occurrences of symbol $i$ in $v$.  
$\;$ 
For example $\#_2 (1121) = \ff(1121)_2 =  1$.

\mdef 
Consider  $v =  v_1 v_2 ... v_n v_{n+1} ... v_{n+m}
\in \NN^{n+m}$.  
We denote by  
${\unkk{v}{n-}} \in \NN^n$ the subword that is the first $n$ symbols of $v$, 
and by 
$\unkk{v}{-m} \in \NN^m $ the last $m$ symbols,
so that 
$\unkk{v}{n-} \unkk{v}{-m} =v$.

\mdef {\bf Proposition}. \label{pr:ddagger}
Consider  $v,w \in \NN^{n+m}$.   
\\ 
(I) If $\ff(v) < \ff(w)$ then  
\ignore{{
$$
\ff(v_1, v_2, ..., v_n, v_{n+1},..., v_{n+m})
\; < \;  
\ff( w_1, w_2, ...,w_n,w_{n+1},...,w_{n+m} )
$$
%
We have 
${\unkk{v}{n-}}, 
{\unkk{w}{n-}} \in \NN^n$,
$\unkk{v}{-m},\unkk{w}{-m} \in \NN^m $,
the indicated 
projections to subwords 
so that $\unkk{v}{n-} \unkk{v}{-m} =v$ (resp. for $w$).
}}
either $\ff(\unkk{v}{n-}) < \ff(\unkk{w}{n-})$ or 
$\ff(\unkk{v}{-m})<\ff(\unkk{w}{-m})$, or both. 
\\
(II) If $\ff(v) = \ff(w)$ then either 
$\ff(\unkk{v}{n-}) = \ff(\unkk{w}{n-})$ {\em and} 
$\; \ff(\unkk{v}{-m}) = \ff(\unkk{w}{-m})$ 
or both are inequalities and in opposite directions.
\\
(III) If $\ff(v) > \ff(w)$ then at least one of 
$\ff(\unkk{v}{n-}) > \ff(\unkk{w}{n-})$  and 
$\; \ff(\unkk{v}{-m}) > \ff(\unkk{w}{-m})$ 
holds. 

\medskip  

\noindent 
{\em Proof}. 
Of course 
$\ff(\unkk{v}{n-}) +\ff(\unkk{v}{-m}) = \ff(v)$ for any $v \in \NN^{n+m}$.
\\
(I)
Suppose for a contradiction that both  
$\ff(\unkk{v}{n-}) \geq \ff(\unkk{w}{n-})$  and 
$\ff(\unkk{v}{-m}) \geq \ff(\unkk{w}{-m})$.
The case both equal gives $\ff(v)=\ff(w)$ --- a contradiction.
So then 
by (\ref{de:Lammoo},\ref{pr:dagger})
there is a lowest $i$ so that $\#_i (\unkk{v}{.-.}) > \#_i(\unkk{w}{.-.})$
on at least one `side', i.e. for $.\!-\!.$ is $n-$ or $-m$.
So then 

\beq  \label{eq:hashsquarej}
\begin{array}{ccccc} 
\#_j (\unkk{v}{n-}) &+ \#_j(\unkk{v}{-m}) =&\#_j(v)    \\
|| & || & || \\
\#_j (\unkk{w}{n-}) &+ \#_j(\unkk{w}{-m}) =&\#_j(w)
\end{array}
\eq 
for $j <i$, and (with at least one $\leq$ strict)
$$
\begin{array}{ccccc} 
\#_i (\unkk{v}{n-}) &+ \#_i(\unkk{v}{-m}) =&\#_i(v)    \\
{\mathsf{V}}| & {\mathsf V}| & {\mathsf V} \\
\#_i (\unkk{w}{n-}) &+ \#_i(\unkk{w}{-m}) =&\#_i(w)
\end{array}
$$
--- a contradiction, by the $\prec$ formulation of $(\Lamm{N}{n},<)$ 
allowed by (\ref{pr:dagger}).
\ignore{{
...
Then these \ppm{ / }
the orbit representatives as in (\ref{eq:orbrep})
 are the same up to $v_{i-1}$ for some $i$ (possibly $i=1$) and then 
$v_i > w_i$.
Now break them both into lengths $n|m$.
\\
If $i\leq n $ then $v_1..v_n < w_1 .. w_n$.
\\
If $i>n$ then $v_{n+1} ..v_i v_{i+1} ..v_{n+m}$ 
is first different from $w_{n+1} ..w_{n+m}$ at $v_i > w_i$,
so $v_{n+1}..v_{n+m} < w_{n+1}..w_{n+m}$.
So, altogether, at least one subword obeys $v_{[]} < w_{[]}$.
\ppm{[-finish this!]}
}}
\\
(II) Suppose $\ff(\unkk{v}{n-}) =\ff(\unkk{w}{n-})$. 
Then the outer two vertical equalities hold in (\ref{eq:hashsquarej}) for all $j$;
and hence the inner equality holds. 
Similarly if   $\ff(\unkk{v}{n-}) > \; \ff(\unkk{w}{n-})$ then there is a lowest $i$ 
where  $\#_i(\unkk{v}{n-} ) < \; \; \#_i(\unk{w}{n-})$, and then  
$\#_i(v) = \#_i(w)$ forces 
$\#_i(\unkk{v}{-m} ) \; > \; \; \#_i(\unkk{w}{-m})$. 
And similarly with the inequalities reversed. 
\\
(III) Similar argument to (I). 
\qed

\medskip 

\mdef {\bf{Theorem}}.  \label{th:CCwg} 
Fix $N \in \N$. 
The subset of CCwg morphisms of $\MAT^N$ gives a strict monoidal subcategory
(hence denoted $\MATCHwg^N$). 

\medskip 

\noindent 
{\em Proof}. 
We require to show closure under the category and monoidal compositions
(observe that the units are clearly present). 
\\ 
For the category composition, 
consider $L,M \in \MAT^N$,  
composable, and in the CCwg subset.
If either is not square then $LM=0$ so there is nothing to check.
So take $L,M \in \MAT^N(n,n)$.
For $w,v \in \N^n$ 
\[
\langle w | LM | v \rangle \; 
= \;\; \sum_{u \in \N^n} \langle w | L | u \rangle \; \langle u | M | v \rangle 
\]
We require to show that $\ff(w) \not\leq \ff(v)$ (i.e. $\ff(w) > \ff(v)$) implies
$  \langle w | LM | v \rangle =0 $.
Suppose $\ff(w) > \ff(v)$. We have   
\[
\langle w | LM | v \rangle \; 
= \; \sum_{u \in \N^n \over \ff(u)<\ff(w)} 
        \overbrace{\langle w | L | u \rangle}^{=0} \langle u | M | v \rangle 
  + \sum_{u \in \N^n \over \ff(u) \geq \ff(w)} 
      \langle w | L | u \rangle \overbrace{\langle u | M | v \rangle}^{=0}
      \;\; = \; 0
\]
as required, where the first indicated factor is 0 by the CCwg condition on $L$;
and the second because $\ff(u) \geq \ff(w) >\ff(v)$ implies $\ff(u) > \ff(v)$ here. 
\\
For the monoidal composition, take $L \in \MAT^N(n,n')$, $M\in\MAT^N(m,m')$.
If $n\neq n'$ or $m\neq m'$ then $L\otimes M=0$ so there is nothing to check, 
so take 
$n=n'$, $m=m'$. 
For $w,v\in \NN^{n+m}$
we require to show $\langle w | L \otimes M |v\rangle =0$ if $\ff(w)>\ff(v)$.
Suppose $\ff(w) > \ff(v)$.
Recall we have 
\beq  \label{eq:kronekerid} 
\langle  {{\scalebox{1.02}{$w$}}}   | L \otimes M |      {v} \rangle  
\; = \; 
  \langle \unkk{w}{n-} | L  |\unkk{v}{n-}\rangle  \;   \langle \unkk{w}{-m} |  M | \unkk{v}{-m}\rangle
\eq 
It is enough to show   
one of 
$\ff( \unkk{w}{n-} ) > \ff(\unkk{v}{n-})$ or  
$\ff( \unkk{w}{-m} ) > \ff(\unkk{v}{-m})$
given $\ff(w) > \ff(v)$.
\ignore{{
For this we need to understand a property of the $(\Lamm{N}{n},>)$ order as 
in (\ref{pr:dagger}) and (\ref{pr:ddagger}).


\mdef (Back to the proof of (\ref{th:CCwg})) 
}}
We indeed have this from (\ref{pr:ddagger}), so that one or other factor in (\ref{eq:kronekerid}) vanishes here, hence the entry vanishes as required.
\qed

\mdef {\bf Proposition}. 
Let $N \in \N$. The category $\MATCH^N$ is a monoidal subcategory of $\MATCHwg^N$. 
\medskip

\noindent 
{\em Proof}. 
The CC condition defining $ \MATCH^N$ can be expressed as $R_{vw} \neq 0$ implies $\ff(v) = \ff(w)$.
\qed

\mdef    \label{pa:2radical}
Note the following from (\ref{pr:ddagger}) and the formulation in the proof 
of Thm.\ref{th:CCwg}
above. 
\medskip 

\noindent 
{\bf Lemma}. 
Let 
$L \in \MATCHwg^N(n,n)$, $M\in\MATCHwg^N(m,m)$, 
and $w,v \in \NN^{n+m}$. 
\medskip 
\\
(A) If $\ff(w) =\ff(v)$ then 
\\
(I) 
if $\ff(\unkk{w}{n-}) = \ff(\unkk{v}{n-})$ then 
$\ff(\unkk{w}{-m}) = \ff(\unkk{v}{-m})$ and both factors in  
$ \langle w | L \otimes M |v\rangle $  
from (\ref{eq:kronekerid})
come from 
`CC entries' in $L$ and $M$. 
$\;$ 
On the other hand 
\\
(II) 
if $\ff(\unkk{w}{n-}) \neq  \ff(\unkk{v}{n-})$
then one of the factors is `glue', and the other is zero.
...

\medskip 

\noindent (B)
And on the other hand if $\ff(w) < \ff(v)$ (i.e. we are looking at a matrix entry 
$ \langle w | L \otimes M |v\rangle $ 
in a glue position) then at least one factor 
comes from a `glue entry' in $L$ or $M$. ...
\qed 

\medskip

\subsection{Towards a glue calculus for representation theory}  
\label{ss:glue-calculus}
$\;$ 

Let us now consider the generic glue ansatz for an $R$-matrix
- extending the generic charge-conserving ansatz solved to classify braid representations $F:\Bcat\rightarrow\MATCH^N$ in \cite{MR1X}. 
(Or indeed for representations of generators in any other such strict monoidal category.)
We want to establish properties of the constraints that depend on whether a parameter is glue (i.e. allowed non-zero in CCgw but not allowed in CC) or non-glue. 

For example 
\beq    \label{eq:Morange}
D=\begin{bmatrix}
    \alpha & \rg \beta \\ & \gamma
\end{bmatrix}  \in \MATCHwg^2(1,1),  \hspace{1cm}
M=
\begin{bmatrix} 
a & \rg b & \rg c & \rg d \\ 
&f&g&\rg h \\ 
&k&l&\rg m \\
&&& r
\end{bmatrix}
\in \MATCHwg^2(2,2)
\eq
has the glue coloured \rg{orange}.

The basics:
\[
\langle 11 | M  = [1,0,0,0]M = [a,b,c,d] ,
\hspace{1cm} 
\langle 22 | M  = [0,0,0,1]M = [0,0,0,r] 
\]
\[
\langle 22 | M | 12 \rangle \; = \; [0,0,0,1] \; M\begin{bmatrix}
    0 \\ 0 \\ 1 \\ 0
\end{bmatrix} = 0 
\]
- consistent with  
$\ff(22) > \ff(12) $ 
as in (\ref{de:CCwg}). 

Meanwhile for example
\[
D \otimes D = \begin{bmatrix}
    \alpha \alpha & \alpha \rg\beta & \rg\beta \alpha & \rg{\beta\beta} \\
                  & \alpha\gamma   &           &  \rg\beta \gamma \\
                  &                & \gamma\alpha &\gamma\rg\beta \\
                  &             &                & \gamma\gamma 
\end{bmatrix}
\]
\[
M\otimes M \; = \; \left[ \begin{array}{cccc|cccc|cccc|cccc}
    aa&a\rg b&a \rg c&a\rg d &\rg{b}a &\rg{bb} &\rg{bc}&\rg{bd} & ... \\
      & af   & ag    &a\rg{h}&        &\rg{b}f &\rg{b}g&\rg{bh} &  ...\\
      & ak   & al    &a\rg{m}&        &\rg{b}k &\rg{b}l&\rg{bm} & ... \\
      &&&ar&     &&&\rg{b}r & ...  \\ \hline 
      &&&&fa& f\rg{b} &f\rg c& f\rg d& ... \\
      &&&&&ff& fg&f\rg h& ... \\
      &&&&&fk&fl&f\rg m& ... \\
      &&&&&&&fr& ... \\ \hline
      &&&&ka&k\rg b&k\rg c&k\rg d& ... \\ 
      &&&&&kf&kg  &k\rg h& ... \\
      &&&&&kk&kl&k\rg m& ... \\
      &&&&&&& kr & ... \\ \hline 
      &&&&&&&& \ddots
\end{array}
\right]
\]

\newcommand{\unkkkk}[2]{
\begin{array}{c} #1 \\ {#2}\end{array}
}

\mdef 
Let $L,M \in \MATCHwg^N(n,n)$ and 
consider matrix entries 
$
\langle w| LM |v\rangle  .
$
When $\ff(w)=\ff(v)$ we have 
\[
\langle w | LM | v \rangle \; 
= \; \sum_{\unkkkk{u \in \N^n }{ \ff(u)<\ff(w)}} 
        \overbrace{\langle w | L | u \rangle}^{=0} \langle u | M | v \rangle 
  + \sum_{\unkkkk{u \in \N^n }{ \ff(u) = \ff(w)}} 
      \langle w | L | u \rangle 
      \langle u | M | v \rangle
\] \[  \hspace{1in} 
  + \sum_{\unkkkk{u \in \N^n }{ \ff(u) > \ff(w)}} 
      \langle w | L | u \rangle 
      \overbrace{
      \langle u | M | v \rangle
      }^{=0}
\]
from which we see that we get contributions only from the strictly CC entries in $L,M$. 
We deduce the following. 


\mdef  {\bf Proposition}.  \label{pr:id-post-glue}
Consider a given set of rank-$N$ CCwg matrices in level-$n$. 
Write $M_1, M_2, ... \in \MATCHwg^N(n,n)$ for these matrices. 
Any identity expressed in terms of the matrices  
$\{ M_i \}$ 
(such as $M_1 M_2 M_1 = M_2 M_1 M_2$) 
and obeyed by
them is also obeyed by the matrices 
$K(M_i)$
obtained by projecting out the glue. 
\qed 

\mdef    \label{le:w<v}
Now consider $\ff(w) < \ff(v)$ - we have
\[
\langle w | LM | v \rangle \; 
= \; \sum_{\unkkkk{u \in \N^n }{ \ff(u)<\ff(w)}} 
        \overbrace{\langle w | L | u \rangle}^{=0} \langle u | M | v \rangle 
  + \sum_{\unkkkk{u \in \N^n }{ \ff(u) = \ff(w)}} 
      \langle w | L | u \rangle 
      \rg{\langle u | M | v \rangle}
\] \[  \hspace{1in} 
  + \sum_{\unkkkk{u \in \N^n }{ \ff(u) > \ff(w)}} 
      \rg{\langle w | L | u \rangle} 
      \langle u | M | v \rangle
\]
- we see that every such entry contains a contribution with a glue factor 
(coloured \rg{orange})
from $L$ or $M$ or both. 

We deduce the following. 

\newcommand{\Alg}{{\mathsf A}}

\mdef  \label{cth:HoG}
{\bf{Theorem}}. [HoG Theorem.]
(I) Glue is a nilpotent ideal 
- meaning that for each $n$ the glue subset of the algebra $\MATCHwg^N(n,n)$ is a nilpotent ideal. 
Thus for $N,n>1$ the algebra  $\MATCHwg^N(n,n)$  is non-semisimple. 
And furthermore any subalgebra $A$ with $\KKK(A)\neq 0$ 
(as in (\ref{de:K'K}))
is non-semisimple.
\\
(II) Also if $\rho: \Alg \rightarrow \MATCHwg^N(n,n)$ is a representation of an algebra $\Alg$ then so is $K\circ \rho$ 
(again as in (\ref{de:K'K})); 
and the image is 
a quotient by {\em part (or all) of} the radical. 
In particular the irreducible factors of $\rho(\Alg)$ and $(K\circ\rho)(\Alg)$ are the same. 
 \medskip   \\ 
\ppmm{{\em Proof}.  (I) We  see from the final term in the expansion in (\ref{le:w<v}) that products of glue elements are glue elements between indices of greater separation in the order (i.e. $\ff(v)>\ff(u)>\ff(w)$). But for any finite $n$ the ordered set is finite, so, iterating, all products with sufficiently many factors vanish. 
\\ 
(II) From (\ref{pr:id-post-glue}), or directly, we have that 
$(K\circ\rho)(ab) = K(\rho(a) \rho(b)) = \; (K\circ\rho)(a) . (K\circ\rho)(b)$. 
\qed }

\mdef Remark. Observe that a representation of an algebra that factors surjectively through a semisimple algebra need not be semisimple (since the radical could be in the kernel). But a rep that factors surjectively through a non-semisimple algebra is necessarily non-semisimple. 

\medskip 

\mdef Next we can turn to the monoidal product. 
In a nutshell, this part of the calculus can be pulled through from (\ref{pa:2radical}). 

\medskip 

There are some questions as to how to present this as an analogue of Jacobson radicals - questions arising from the non-canonical-ness of higher representation theory. 
So rather than exhibiting a grand theory here we will 
limit ourselves to an example application. 

We have shown, for example, that every braid representation of form 
$F: \Bcat \rightarrow \MATCHwg^N$ `restricts' (cf. e.g. the Wedderburn--Malcev Theorem) 
to a charge-conserving representation.
\ignore{{ 
Consider the functors 
$$
I: \MATCH^N \rightarrow \MATCHwg^N
$$ 
(by inclusion) 
and 
$$
J: \MATCHwg^N \rightarrow \MATCH^N
$$ 
(kill glue entries). 

\ppm{[to finish]}
}}

}}

\vspace{1cm}

\bibliographystyle{abbrv}
\bibliography{local2,more}

\end{document}